\newtheorem{theorem}[subsection]{Theorem}
\newtheorem{cor}[subsection]{Corollary}
\newtheorem{lemma}[subsection]{Lemma}
\newtheorem{prop}[subsection]{Proposition}
 \newtheorem{assumption}[subsection]{Assumption}
\theoremstyle{definition}
\newtheorem{definition}[subsection]{Definition}
\newtheorem{remark}[subsection]{Remark}
\newtheorem*{remark*}{Remark}
\newtheorem*{theorem*}{Theorem}
\newtheorem*{defn*}{Definition}
\newtheorem*{lemma*}{Lemma}
\newtheorem*{corollary*}{Corollary}
\newtheorem{question*}{Question}
\newtheorem*{conjecture*}{Conjecture}
 \newtheorem*{prop*}{Proposition}
 \newtheorem*{example*}{Example}
 \newtheorem*{assumption*}{Assumption}
\newcommand{\cC}{\mathcal{C}}
\newcommand{\cD}{\mathcal{D}}
\newcommand{\cE}{\mathcal{E}}
\newcommand{\cM}{\mathcal{M}}
\newcommand{\cQ}{\mathcal{Q}}
\newcommand{\cR}{\mathcal{R}}
\newcommand{\cS}{\mathcal{S}}
\newcommand{\cT}{\mathcal{T}}
\newcommand{\cU}{\mathcal{U}}
\newcommand{\cV}{\mathcal{V}}
\newcommand{\cX}{\mathcal{X}}
\newcommand{\bA}{\mathbf{A}}
\newcommand{\bC}{\mathbf{C}}
\newcommand{\bF}{\mathbf{F}}
\newcommand{\bI}{\mathbf{I}}
\newcommand{\bL}{\mathbf{L}}
\newcommand{\bR}{\mathbf{R}}
\newcommand{\bS}{\mathbf{S}}
\newcommand{\bZ}{\mathbf{Z}}
\newcommand{\LCH}{\mathrm{LCHS}}
\newcommand{\Mod}{\mathrm{Mod}}
\newcommand{\Fun}{\mathrm{Fun}}
\newcommand{\Hom}{\mathrm{Hom}}
\newcommand{\LMod}{\mathrm{LMod}}
\newcommand{\GL}{\mathrm{GL}}
\newcommand{\End}{\mathrm{End}}
\newcommand{\Spec}{\mathrm{Spec}}
\newcommand{\Pic}{\mathrm{Pic}}
\newcommand{\op}{\mathrm{op}}
\newcommand{\St}{\mathrm{St}}
\newcommand{\Cat}{\mathrm{Cat}}
\newcommand{\sfF}{\mathsf{F}}
\renewcommand{\SS}{\mathit{SS}}
\newcommand{\Brane}{\mathrm{Brane}}
\newcommand{\sm}{\mathit{sm}}
\numberwithin{equation}{subsection}
\newcommand{\pt}{\mathit{pt}}
\newcommand{\mumon}{\mu\mathrm{mon}}
\newcommand{\XZ}{\pi_{\mathrm{f}}}
\newcommand{\XY}{\pi_{\mathrm{lag}}}
\newcommand{\Sh}{\mathrm{Sh}}
\newcommand{\MSh}{\mathrm{MSh}}
\newcommand{\Loc}{\mathrm{Loc}}
\newcommand{\bk}{\mathbf{k}}
\newcommand{\shHom}{\underline{\Hom}}
\newcommand{\LagGr}{\mathrm{LagGr}}
\newcommand{\leftparen}{(}    
\newcommand{\rightparen}{)}  
\newcommand{\leftbracket}{[}
\newcommand{\rightbracket}{]}
\newcommand{\deltazero}{\leftparen -\delta,0\rightbracket}
\newcommand{\Czero}{\leftparen - C,0\rightbracket}
\newcommand{\inftyzero}{\leftparen -\infty,0\rightbracket}
\newcommand{\inftydelta}{\leftparen -\infty,-\delta\rightbracket}
\newcommand{\Shall}{\Sh_{\mathit{all}}}
\newcommand{\Shv}{\mathrm{Shv}}
\newcommand{\Cone}{\mathrm{Cone}}
\newcommand{\proj}{\mathrm{proj}}
\newcommand{\cL}{\mathcal{L}}
\newcommand{\bq}{\mathbf{q}}
\newcommand{\bp}{\mathbf{p}}
\newcommand{\PrL}{\mathcal{P}\mathrm{r}^{\mathrm{L}}}
\newcommand{\PrR}{\mathcal{P}\mathrm{r}^{\mathrm{R}}}
\newcommand{\Open}{\mathrm{Open}}
\newcommand{\Cylb}{\mathrm{Cylb}}
\newcommand{\Nb}{\mathrm{Nb}}
\newcommand{\QCoh}{\mathrm{QCoh}}
\newcommand{\fiber}{\mathrm{fiber}}
\newcommand{\unit}{\mathrm{unit}}
\newcommand{\counit}{\mathrm{co}\text{-}\mathrm{unit}}
\newcommand{\Ttotleft}{\overleftarrow{T}_{\mathit{tot}}}
\newcommand{\Ttotright}{\overrightarrow{T}_{\mathit{tot}}}
\newcommand{\Ttot}{T_{\mathit{tot}}}
\newcommand{\Supp}{\mathrm{Supp}}
\newcommand{\ep}{\epsilon}
\begin{document}

\title{Brane structures in microlocal sheaf theory}
\author{Xin Jin and David Treumann}

\maketitle

\begin{abstract}
Let $L$ be an exact Lagrangian submanifold of a cotangent bundle $T^* M$, asymptotic to a Legendrian submanifold $\Lambda \subset T^{\infty} M$.  We study a locally constant sheaf of $\infty$-categories on $L$, called the sheaf of brane structures or $\Brane_L$.  Its fiber is the $\infty$-category of spectra, and we construct a Hamiltonian invariant, fully faithful functor from $\Gamma(L,\Brane_L)$ to the $\infty$-category of sheaves of spectra on $M$ with singular support in $\Lambda$.
\end{abstract}

\tableofcontents

\section{Introduction}
This paper investigates the following corollary of the main result of \cite{NZ}:

\begin{theorem*}[Nadler-Zaslow]
Let $L$ be an exact Lagrangian submanifold of a cotangent bundle $T^* M$, asymptotic to a Legendrian submanifold $\Lambda \subset T^{\infty} M$.  If the brane obstructions on $L$ vanish, there is a fully faithful functor
\begin{equation}
\label{eq:one}
\Loc(L) \to \Sh_{\Lambda}(M)
\end{equation}
from local systems on $L$, to constructible sheaves on $M$ with singular support over $\Lambda$.
\end{theorem*}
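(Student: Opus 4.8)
The plan is to deduce the statement from the central equivalence of \cite{NZ}, which identifies constructible sheaves on $M$ with the infinitesimally wrapped Fukaya category of $T^* M$: there is an $A_\infty$- (equivalently, $\infty$-categorical) equivalence, microlocalization,
\[
\mu_M \colon \Sh_c(M) \xrightarrow{\ \sim\ } \mathrm{Fuk}(T^* M),
\]
under which the singular support at infinity of a sheaf $F$ matches the Legendrian boundary at infinity of the Lagrangian brane corresponding to $F$. Granting this, the theorem becomes a statement about the full subcategory of $\mathrm{Fuk}(T^* M)$ spanned by $L$ together with its rank-one local-system twists, and the work is to identify that subcategory with $\Loc(L)$.

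First I would use the hypothesis that the brane obstructions on $L$ vanish to equip $L$ with a grading (possible because the Maslov class of $L$ is zero) and a relative Pin/orientation structure; with these choices fixed, every local system $\cE \in \Loc(L)$ yields a well-defined object $(L,\cE)$ of $\mathrm{Fuk}(T^* M)$, and these assemble into a functor $\iota \colon \Loc(L) \to \mathrm{Fuk}(T^* M)$. The key point is that $\iota$ is fully faithful. Since $L$ is an exact Lagrangian in a cotangent bundle, a Hamiltonian pushoff identifies the self-Floer complex $CF^*(L,L)$ with the Morse cochain complex of $L$; more precisely, $CF^*(L,L)$ is quasi-isomorphic as an $A_\infty$-algebra to the cochains $C^*(L)$, and tensoring with local coefficients gives $HF^*((L,\cE),(L,\cF)) \cong H^*(L;\,\mathcal{H}om(\cE,\cF)) = \Hom_{\Loc(L)}(\cE,\cF)$; finally, the category of $C^*(L)$-modules generated by the rank-one free module is precisely $\Loc(L)$. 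I would either cite the Morse–Bott computation of $CF^*(L,L)$ from the Floer-theory literature or carry it out directly using exactness of $L$ and the standard transversality package.

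With $\iota$ in hand, the functor asserted by the theorem is the composite $\mu_M^{-1} \circ \iota \colon \Loc(L) \to \Sh_c(M)$, which is fully faithful because both factors are. It remains to see that its image lands in $\Sh_\Lambda(M)$: by compatibility of $\mu_M$ with singular support, the sheaf $\mu_M^{-1}(L,\cE)$ has singular support at infinity contained in the Legendrian boundary of $L$, which is $\Lambda$ by hypothesis; hence its singular support lies in $\Lambda \cup 0_M$, so it is an object of $\Sh_\Lambda(M)$. This completes the deduction.

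I expect the main obstacle to be the $A_\infty$-level assertion in the second paragraph: not merely the additive isomorphism $HF^*(L,L) \cong H^*(L)$, but its upgrade to a statement organizing the twisted objects $(L,\cE)$ into an honest fully faithful functor out of $\Loc(L)$, together with the bookkeeping needed to ensure that $L$ and all of its twists are admissible objects for the particular (infinitesimally wrapped, perturbed) model of $\mathrm{Fuk}(T^* M)$ used in \cite{NZ}. A secondary subtlety is that this route produces the functor only after many auxiliary choices — perturbation data and the equivalence $\mu_M$ itself — so Hamiltonian invariance and independence of choices are not transparent; an alternative, more self-contained route, which I would take if I wanted those properties manifestly and if I wanted to replace $\Loc$ by sheaves of spectra, is to build the sheaf directly from the germ of $L$ near $T^{\infty} M$ by a Guillermou-type quantization of the Lagrangian, bypassing Floer theory entirely.
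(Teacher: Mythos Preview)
Your proposal is correct as a proof sketch and is in fact precisely the Floer-theoretic argument the paper attributes to Nadler--Zaslow in the sentence immediately following the statement.  However, it is \emph{not} the proof the paper gives.  The entire point of this paper is to reprove the theorem without Floer theory, so that the functor \eqref{eq:one} can be constructed with coefficients in an arbitrary ring spectrum.

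The paper's route is the one you mention in your final sentence as an ``alternative.''  Concretely: one lifts $L$ to a Legendrian $\bL \subset (T^* M) \times \bR$ via a primitive, and works with the category $\Sh^0_{\bL}(M \times \bR)$ of sheaves on $M \times \bR$ with singular support in $\Cone(\bL)$ that vanish near $t = +\infty$.  The functor is then the composite
\[
\Gamma(L,\Brane_L) \xleftarrow{\ \mumon\ } \Sh^0_{\bL}(M \times \bR) \xrightarrow{\ \proj_{1,*}\ } \Sh_{\Lambda}(M).
\]
That $\mumon$ is an equivalence (when $L$ is embedded) is proved by identifying it with a Tamarkin convolution $T_{(-\delta,0]}$ and using a GKS-type argument to make $\delta$ large; that $\proj_{1,*}$ is fully faithful is a separate Hamiltonian/microlocal-Morse argument.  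No holomorphic curves appear anywhere.

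What each approach buys: yours is shorter if one is willing to quote \cite{NZ} as a black box, and it ties the functor to Floer-theoretic invariants.  The paper's approach eliminates the dependence on Floer analysis entirely, which is what allows coefficients in spectra (your $A_\infty$ computation of $CF^*(L,L)$ has no analog over the sphere spectrum), makes Hamiltonian invariance transparent via \cite{GKS}, and extends cleanly to the noncompact, lower-exact setting.  You correctly anticipated this tradeoff in your last paragraph; the paper simply commits to that alternative throughout.
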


Nadler and Zaslow deduce this theorem using Floer theory --- more precisely, they embed the category of local systems on $L$ into a Fukaya category of $T^* M$, and produce a full embedding  of this Fukaya category into the derived category of sheaves on $M$.  Our aim is to give a purely sheaf-theoretic construction of \eqref{eq:one}, one that is ``soft'' enough to apply to sheaves of spectra.

\tableofcontents

\subsection{Exact Lagrangians and wavefronts}
\label{intro:eLaw}
Let $\alpha$ denote the one-form
\begin{equation}
\label{intro:alpha}
\alpha = \xi_1 dx_1 + \cdots + \xi_n dx_n
\end{equation}
so that $-d\alpha$ is the standard symplectic form on $\bR^{2n}$.  A Lagrangian submanifold $L \subset \bR^{2n}$ is called exact if there is an $f:L \to \bR$ with $df = \alpha\vert_L$.  Exact Lagrangians are intensively studied, partly to avoid analytic issues that arise in Floer theory of more general Lagrangians.  A more intrinsic reason to study exact Lagrangians is a variant of the Arnold conjecture, which asserts that exact Lagrangians have a \emph{topological} nature: taken up to Hamiltonian isotopy, they have no moduli.

The theory of the wavefront projection shows one aspect of the topological nature of exact Lagrangians.  If $L$ is connected, the function $f$ is unique up to an additive constant.  The wavefront projection is the (immersed, singular) hypersurface in $\bR^{n+1}$ parametrized by $(x_1,\ldots,x_n,f)$.  One recovers $L$ from those $n+1$ coordinates as $\xi_i = \partial f/\partial x_i$.  In fact if $L$ is in general position, it can be uniquely recovered from just the image $\bF \subset \bR^{n+1}$ of the wavefront immersion $L \to \bR^{n+1}$.

\begin{example*}
There are no embedded exact Lagrangians in $\bR^{2n}$ that are compact --- we discuss this further in \S\ref{intro:compact-noncompact}.  But the wavefront map makes sense for immersed Lagrangians $L \to \bR^{2n}$, here is an example in $\bR^2$:
\begin{center}
\begin{tikzpicture}[scale = .75]
\draw [thick]
(4.00000000000000,0.000000000000000)--(3.99210691371309,0.187381314585725)--(3.96845880525791,0.368124552684678)--(3.92914900291475,0.535826794978997)--(3.87433264451452,0.684547105928689)--(3.80422606518061,0.809016994374947)--(3.71910594355301,0.904827052466020)--(3.61930820986408,0.968583161128631)--(3.50522672017545,0.998026728428272)--(3.37731170200806,0.992114701314478)--(3.23606797749979,0.951056516295154)--(3.08205297110316,0.876306680043863)--(2.91587450968565,0.770513242775789)--(2.73818842371475,0.637423989748690)--(2.54969595899476,0.481753674101716)--(2.35114100916989,0.309016994374947)--(2.14330717991599,0.125333233564305)--(1.92701469640686,-0.0627905195293133)--(1.70311716626029,-0.248689887164855)--(1.47249821073871,-0.425779291565072)--(1.23606797749979,-0.587785252292473)--(0.994759548659420,-0.728968627421411)--(0.749525258342899,-0.844327925502015)--(0.501332934257217,-0.929776485888251)--(0.251162078117254,-0.982287250728689)--(0.000000000000000,-1.00000000000000)--(-0.251162078117254,-0.982287250728689)--(-0.501332934257217,-0.929776485888251)--(-0.749525258342899,-0.844327925502016)--(-0.994759548659419,-0.728968627421412)--(-1.23606797749979,-0.587785252292473)--(-1.47249821073871,-0.425779291565072)--(-1.70311716626029,-0.248689887164855)--(-1.92701469640686,-0.0627905195293133)--(-2.14330717991599,0.125333233564304)--(-2.35114100916989,0.309016994374947)--(-2.54969595899476,0.481753674101716)--(-2.73818842371475,0.637423989748690)--(-2.91587450968565,0.770513242775789)--(-3.08205297110316,0.876306680043863)--(-3.23606797749979,0.951056516295154)--(-3.37731170200806,0.992114701314478)--(-3.50522672017545,0.998026728428272)--(-3.61930820986408,0.968583161128631)--(-3.71910594355301,0.904827052466019)--(-3.80422606518061,0.809016994374947)--(-3.87433264451452,0.684547105928690)--(-3.92914900291475,0.535826794978997)--(-3.96845880525791,0.368124552684678)--(-3.99210691371309,0.187381314585726)--(-4.00000000000000,0.000000000000000)--(-3.99210691371309,-0.187381314585725)--(-3.96845880525791,-0.368124552684677)--(-3.92914900291475,-0.535826794978997)--(-3.87433264451452,-0.684547105928689)--(-3.80422606518061,-0.809016994374947)--(-3.71910594355300,-0.904827052466019)--(-3.61930820986408,-0.968583161128631)--(-3.50522672017545,-0.998026728428272)--(-3.37731170200806,-0.992114701314478)--(-3.23606797749979,-0.951056516295154)--(-3.08205297110316,-0.876306680043863)--(-2.91587450968565,-0.770513242775789)--(-2.73818842371476,-0.637423989748691)--(-2.54969595899476,-0.481753674101716)--(-2.35114100916989,-0.309016994374947)--(-2.14330717991599,-0.125333233564304)--(-1.92701469640686,0.0627905195293108)--(-1.70311716626029,0.248689887164855)--(-1.47249821073871,0.425779291565072)--(-1.23606797749979,0.587785252292473)--(-0.994759548659421,0.728968627421410)--(-0.749525258342899,0.844327925502016)--(-0.501332934257218,0.929776485888251)--(-0.251162078117253,0.982287250728689)--(0.000000000000000,1.00000000000000)--(0.251162078117251,0.982287250728689)--(0.501332934257217,0.929776485888252)--(0.749525258342897,0.844327925502016)--(0.994759548659420,0.728968627421411)--(1.23606797749979,0.587785252292473)--(1.47249821073871,0.425779291565073)--(1.70311716626029,0.248689887164856)--(1.92701469640686,0.0627905195293119)--(2.14330717991598,-0.125333233564303)--(2.35114100916989,-0.309016994374947)--(2.54969595899476,-0.481753674101715)--(2.73818842371475,-0.637423989748689)--(2.91587450968564,-0.770513242775790)--(3.08205297110316,-0.876306680043864)--(3.23606797749979,-0.951056516295154)--(3.37731170200806,-0.992114701314478)--(3.50522672017545,-0.998026728428272)--(3.61930820986408,-0.968583161128631)--(3.71910594355300,-0.904827052466020)--(3.80422606518061,-0.809016994374947)--(3.87433264451452,-0.684547105928689)--(3.92914900291475,-0.535826794978998)--(3.96845880525791,-0.368124552684680)--(3.99210691371309,-0.187381314585724)--(4.00000000000000,0.000000000000000);
\draw [thick]
(13.0000000000000,0.000000000000000)--(12.9921069137131,-0.000988289981876861)--(12.9684588052579,-0.00781305011399713)--(12.9291490029148,-0.0258509997203336)--(12.8743326445145,-0.0595897113507078)--(12.8042260651806,-0.112256994144896)--(12.7191059435530,-0.185533741714553)--(12.6193082098641,-0.279369617411445)--(12.5052267201755,-0.391914399269005)--(12.3773117020081,-0.519570431078125)--(12.2360679774998,-0.657163890148917)--(12.0820529711032,-0.798224974386350)--(11.9158745096856,-0.935360111646119)--(11.7381884237148,-1.06069334521042)--(11.5496959589948,-1.16634952707103)--(11.3511410091699,-1.24494914244139)--(11.1433071799160,-1.29008367385391)--(10.9270146964069,-1.29674145173502)--(10.7031171662603,-1.26165686814013)--(10.4724982107387,-1.18356047014282)--(10.2360679774998,-1.06331351044005)--(9.99475954865942,-0.903917636852723)--(9.74952525834290,-0.710398105649023)--(9.50133293425722,-0.489566724215712)--(9.25116207811725,-0.249678177146732)--(9.00000000000000,0.000000000000000)--(8.74883792188275,0.249678177146732)--(8.49866706574278,0.489566724215713)--(8.25047474165710,0.710398105649023)--(8.00524045134058,0.903917636852722)--(7.76393202250021,1.06331351044005)--(7.52750178926129,1.18356047014282)--(7.29688283373971,1.26165686814013)--(7.07298530359314,1.29674145173502)--(6.85669282008401,1.29008367385391)--(6.64885899083011,1.24494914244139)--(6.45030404100524,1.16634952707103)--(6.26181157628525,1.06069334521042)--(6.08412549031435,0.935360111646120)--(5.91794702889684,0.798224974386350)--(5.76393202250021,0.657163890148917)--(5.62268829799194,0.519570431078126)--(5.49477327982455,0.391914399269006)--(5.38069179013592,0.279369617411445)--(5.28089405644699,0.185533741714553)--(5.19577393481939,0.112256994144896)--(5.12566735548548,0.0595897113507078)--(5.07085099708525,0.0258509997203338)--(5.03154119474209,0.00781305011399719)--(5.00789308628691,0.000988289981876847)--(5.00000000000000,0.000000000000000)--(5.00789308628691,-0.000988289981876847)--(5.03154119474209,-0.00781305011399716)--(5.07085099708525,-0.0258509997203337)--(5.12566735548548,-0.0595897113507079)--(5.19577393481939,-0.112256994144896)--(5.28089405644699,-0.185533741714553)--(5.38069179013592,-0.279369617411444)--(5.49477327982455,-0.391914399269005)--(5.62268829799194,-0.519570431078124)--(5.76393202250021,-0.657163890148917)--(5.91794702889684,-0.798224974386350)--(6.08412549031435,-0.935360111646119)--(6.26181157628524,-1.06069334521042)--(6.45030404100524,-1.16634952707103)--(6.64885899083011,-1.24494914244139)--(6.85669282008401,-1.29008367385391)--(7.07298530359314,-1.29674145173502)--(7.29688283373971,-1.26165686814013)--(7.52750178926129,-1.18356047014283)--(7.76393202250021,-1.06331351044005)--(8.00524045134058,-0.903917636852724)--(8.25047474165710,-0.710398105649022)--(8.49866706574278,-0.489566724215714)--(8.74883792188275,-0.249678177146731)--(9.00000000000000,0.000000000000000)--(9.25116207811725,0.249678177146730)--(9.50133293425722,0.489566724215712)--(9.74952525834290,0.710398105649021)--(9.99475954865942,0.903917636852723)--(10.2360679774998,1.06331351044005)--(10.4724982107387,1.18356047014283)--(10.7031171662603,1.26165686814013)--(10.9270146964069,1.29674145173502)--(11.1433071799160,1.29008367385391)--(11.3511410091699,1.24494914244139)--(11.5496959589948,1.16634952707103)--(11.7381884237148,1.06069334521042)--(11.9158745096856,0.935360111646120)--(12.0820529711032,0.798224974386349)--(12.2360679774998,0.657163890148917)--(12.3773117020081,0.519570431078124)--(12.5052267201755,0.391914399269005)--(12.6193082098641,0.279369617411444)--(12.7191059435530,0.185533741714554)--(12.8042260651806,0.112256994144896)--(12.8743326445145,0.0595897113507082)--(12.9291490029148,0.0258509997203337)--(12.9684588052579,0.00781305011399722)--(12.9921069137131,0.000988289981876833)--(13.0000000000000,0.000000000000000);
\end{tikzpicture}
\end{center}

The left-hand figure displays $(x,\xi):L \to \bR^2$, which is exact so long as the area of the central bigon is equal to the sum of the areas enclosed by the two outer lobes.  The right-hand figure is the wavefront hypersurface $\bF$ --- note the two singularities where the map $(x,f): L \to \bR^2$ fails to be an immersion.
\end{example*}

\subsection{Isotopies}
\label{intro:isotopies}
Part of the topological nature of exact Lagrangians is illustrated by the following Proposition:

\begin{prop*}
Suppose that $L_1$ and $L_2$ are two embedded exact Lagrangians with associated wavefront hypersurfaces $\bF_1$ and $\bF_2$.  If there is a smooth isotopy of $\bR^{n+1}$ carrying $\bF_1$ to $\bF_2$, then $L_1$ and $L_2$ are Hamiltonian isotopic.
\end{prop*}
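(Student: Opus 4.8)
The plan is to reduce the statement to the standard fact that an isotopy \emph{through} exact Lagrangians is a Hamiltonian isotopy, and then to upgrade the given smooth isotopy of wavefronts into an isotopy through wavefronts.

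First recall the relevant dictionary (here $M=\bR^n$, so $T^*M=\bR^{2n}$). An embedded exact Lagrangian $L\subset T^*M$ with primitive $f$ has a Legendrian lift $\widehat L=\{(x,df_x,f(x))\}\subset J^1M=T^*M\times\bR$, and $\bF$ is its image under the front projection $\pi\colon J^1M\to M\times\bR=\bR^{n+1}$. The Legendrian condition $dz=\xi\,dx$ along $\widehat L$ shows that $d\pi(T_p\widehat L)$ can never contain a vector proportional to $\partial_z$, so at every smooth point the tangent hyperplane of $\bF$ is transverse to the $\bR$-fibers of $\bR^{n+1}\to M$, and likewise for the tangent cones at cusps since $\xi$ is finite along $\widehat L$. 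Conversely, a hypersurface of $\bR^{n+1}$ with this non-verticality is locally a graph $z=g(x)$, hence locally the wavefront of the exact Lagrangian $\{(x,dg_x)\}$. Identifying $J^1M$ with the open subset of the contact-element bundle $PT^*(\bR^{n+1})$ consisting of hyperplanes transverse to the $\bR$-fibers, the passage $L\rightsquigarrow\widehat L\rightsquigarrow\bF$ therefore matches smooth families of embedded exact Lagrangians with smooth isotopies of their wavefronts through non-vertical hypersurfaces whose associated Lagrangian remains embedded. Given such a family $\{L_t\}$, differentiating $df_t=\alpha|_{L_t}$ shows that the closed one-form on $L_t$ obtained by contracting $-d\alpha$ with the velocity of the isotopy is exact; extending a primitive of it to a time-dependent Hamiltonian on $T^*M$ — with behaviour near infinity controlled by the asymptotics defining the class of $L$ — produces the ambient Hamiltonian isotopy. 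So it is enough to turn the given isotopy $\phi_t$ of $\bR^{n+1}$, with $\phi_0=\mathrm{id}$ and $\phi_1(\bF_1)=\bF_2$, into an isotopy of $\bF_1$ through wavefronts.

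The isotopy $\phi_t$ lifts canonically to a contact isotopy $\widehat\phi_t$ of $PT^*(\bR^{n+1})$, and by naturality of the conormal lift $\widehat\phi_t$ carries the Legendrian lift of $\bF_1$ onto that of $\phi_t(\bF_1)$; since $\bF_1$ and $\bF_2$ are non-vertical, these lifts lie in $J^1M\subset PT^*(\bR^{n+1})$ at $t=0,1$ and equal $\widehat L_1,\widehat L_2$ there. The one remaining point — and this is the crux — is to perturb the path $\{\phi_t\}$ with endpoints fixed so that $\bF_t:=\phi_t(\bF_1)$ is a non-vertical hypersurface with embedded associated Lagrangian for \emph{every} $t$; equivalently, so that the whole Legendrian isotopy stays inside $J^1M$ and projects to an embedding in $T^*M$ at each time. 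The two ways this can fail — a tangent hyperplane of $\bF_t$ turning vertical, or two sheets of the associated Lagrangian colliding — are codimension-one conditions, and since they are absent at $t=0$ and $t=1$ they occur only in cancelling birth–death families along the path; one eliminates these by a Cerf-type argument, shrinking each family to a point by a further modification of $\{\phi_t\}$ supported away from the endpoints. (A merely generic perturbation does not help, since such families persist; this elimination, together with a routine check that the isotopy can be taken to respect the structure at infinity, is the genuinely technical part.) Once it is carried out, the projected family $\{L_t\}$ is a path of embedded exact Lagrangians from $L_1$ to $L_2$, and the previous paragraph concludes.
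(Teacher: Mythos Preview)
The paper does not prove this Proposition; it is stated without proof in the introduction as background, so there is no argument in the paper to compare against.

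Your reduction is sound: lifting the ambient isotopy $\phi_t$ to a contact isotopy of $PT^*\bR^{n+1}$ carries the Legendrian lift of $\bF_1$ to that of $\bF_2$, and once you have a path of Legendrians staying inside $J^1\bR^n$ with embedded Lagrangian projection, the standard fact that exact Lagrangian isotopies are Hamiltonian finishes the job. You have also correctly located the real difficulty. But the step you flag as the crux is not carried out, and the justification you offer for it does not work. You call the two failure modes (a tangent hyperplane turning vertical; two sheets colliding in the Lagrangian projection) ``codimension-one conditions'' and then speak of eliminating them by a ``Cerf-type argument, shrinking each family to a point.'' Codimension-one phenomena in a one-parameter family occur on intervals of the parameter and are \emph{stable} under small perturbation --- that is precisely what codimension one means. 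There is no mechanism that cancels them in pairs; birth--death cancellation is a feature of codimension-two degenerations in two-parameter families. Concretely, if $\tau$ denotes the vertical covector coordinate, the function $t\mapsto\max_{\widehat L_t}\tau$ may simply rise through zero and descend again, and no perturbation of the path removes that interval. You acknowledge parenthetically that a generic perturbation does not help, but then nothing replaces it: the sentence ``one eliminates these by a Cerf-type argument'' is an assertion, not an argument.

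What is actually needed is either a flexibility statement --- that two Legendrians in $J^1\bR^n$ which become Legendrian isotopic in the larger $PT^*\bR^{n+1}$ are already Legendrian isotopic within $J^1\bR^n$ and through embeddings under the Lagrangian projection --- or an entirely different route that avoids tracking the intermediate fronts (for instance, working directly with the homogeneous Hamiltonian isotopy of $T^{*,\circ}\bR^{n+1}$ lifting $\phi_t$). Either way, the proof as written stops exactly where the substance begins.
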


The converse of this Proposition is false.  For example, the following two fragments of wavefronts are associated to a pair of Hamiltonian isotopic Lagrangians:

\begin{center}
\begin{tikzpicture}
\draw [thick] (1.00000000000000,-1.57079632679490)--
(0.998026728428272,-1.57071370865007)--
(0.992114701314478,-1.57013694551431)--
(0.982287250728689,-1.56857968535837)--
(0.968583161128631,-1.56557103917673)--
(0.951056516295154,-1.56066300718853)--
(0.929776485888251,-1.55343754406168)--
(0.904827052466019,-1.54351315173756)--
(0.876306680043864,-1.53055089588322)--
(0.844327925502015,-1.51425975108832)--
(0.809016994374947,-1.49440119050971)--
(0.770513242775789,-1.47079294758219)--
(0.728968627421412,-1.44331189047119)--
(0.684547105928689,-1.41189596393529)--
(0.637423989748690,-1.37654516797450)--
(0.587785252292473,-1.33732155783022)--
(0.535826794978997,-1.29434826533703)--
(0.481753674101715,-1.24780755706014)--
(0.425779291565073,-1.19793795984268)--
(0.368124552684678,-1.14503049909501)--
(0.309016994374947,-1.08942410915006)--
(0.248689887164855,-1.03150028806647)--
(0.187381314585725,-0.971677081176312)--
(0.125333233564304,-0.910402488260458)--
(0.0627905195293135,-0.848147398324422)--
(0.000000000000000,-0.785398163397448)--
(-0.0627905195293134,-0.722648928470474)--
(-0.125333233564304,-0.660393838534439)--
(-0.187381314585725,-0.599119245618585)--
(-0.248689887164855,-0.539296038728428)--
(-0.309016994374947,-0.481372217644840)--
(-0.368124552684678,-0.425765827699888)--
(-0.425779291565073,-0.372858366952215)--
(-0.481753674101715,-0.322988769734761)--
(-0.535826794978997,-0.276448061457862)--
(-0.587785252292473,-0.233474768964681)--
(-0.637423989748690,-0.194251158820399)--
(-0.684547105928689,-0.158900362859605)--
(-0.728968627421411,-0.127484436323707)--
(-0.770513242775789,-0.100003379212705)--
(-0.809016994374947,-0.0763951362851909)--
(-0.844327925502015,-0.0565365757065764)--
(-0.876306680043863,-0.0402454309116796)--
(-0.904827052466019,-0.0272831750573382)--
(-0.929776485888251,-0.0173587827332153)--
(-0.951056516295154,-0.0101333196063714)--
(-0.968583161128631,-0.00522528761816290)--
(-0.982287250728689,-0.00221664143652411)--
(-0.992114701314478,-0.000659381280582030)--
(-0.998026728428272,-0.0000826181448217722)--
(-1.00000000000000,0.000000000000000)--
(-0.998026728428272,0.0000826181448218902)--
(-0.992114701314478,0.000659381280582148)--
(-0.982287250728689,0.00221664143652424)--
(-0.968583161128631,0.00522528761816282)--
(-0.951056516295154,0.0101333196063714)--
(-0.929776485888251,0.0173587827332153)--
(-0.904827052466020,0.0272831750573384)--
(-0.876306680043864,0.0402454309116798)--
(-0.844327925502015,0.0565365757065766)--
(-0.809016994374947,0.0763951362851909)--
(-0.770513242775789,0.100003379212705)--
(-0.728968627421412,0.127484436323707)--
(-0.684547105928689,0.158900362859605)--
(-0.637423989748690,0.194251158820399)--
(-0.587785252292473,0.233474768964681)--
(-0.535826794978996,0.276448061457862)--
(-0.481753674101715,0.322988769734761)--
(-0.425779291565072,0.372858366952216)--
(-0.368124552684679,0.425765827699888)--
(-0.309016994374947,0.481372217644840)--
(-0.248689887164855,0.539296038728427)--
(-0.187381314585725,0.599119245618585)--
(-0.125333233564305,0.660393838534439)--
(-0.0627905195293132,0.722648928470474)--
(0.000000000000000,0.785398163397448)--
(0.0627905195293128,0.848147398324422)--
(0.125333233564304,0.910402488260458)--
(0.187381314585724,0.971677081176312)--
(0.248689887164855,1.03150028806647)--
(0.309016994374947,1.08942410915006)--
(0.368124552684678,1.14503049909501)--
(0.425779291565072,1.19793795984268)--
(0.481753674101715,1.24780755706014)--
(0.535826794978996,1.29434826533703)--
(0.587785252292473,1.33732155783022)--
(0.637423989748689,1.37654516797450)--
(0.684547105928689,1.41189596393529)--
(0.728968627421411,1.44331189047119)--
(0.770513242775789,1.47079294758219)--
(0.809016994374947,1.49440119050971)--
(0.844327925502015,1.51425975108832)--
(0.876306680043864,1.53055089588322)--
(0.904827052466020,1.54351315173756)--
(0.929776485888251,1.55343754406168)--
(0.951056516295154,1.56066300718853)--
(0.968583161128631,1.56557103917673)--
(0.982287250728689,1.56857968535837)--
(0.992114701314478,1.57013694551431)--
(0.998026728428272,1.57071370865007);
\draw [dashed] (0,0) circle (1.86209588911859cm);
\draw [thick] (0.131719452433182, 1.85743131397181) -- (-1.21714709935872,
-1.40923881538758);
\end{tikzpicture}
\quad
\begin{tikzpicture}
\draw [thick] (1.00000000000000,-1.57079632679490)--
(0.999013364214136,-1.57071370865007)--
(0.996057350657239,-1.57013694551431)--
(0.991143625364344,-1.56857968535837)--
(0.984291580564316,-1.56557103917673)--
(0.975528258147577,-1.56066300718853)--
(0.964888242944126,-1.55343754406168)--
(0.952413526233010,-1.54351315173756)--
(0.938153340021932,-1.53055089588322)--
(0.922163962751008,-1.51425975108832)--
(0.904508497187474,-1.49440119050971)--
(0.885256621387895,-1.47079294758219)--
(0.864484313710706,-1.44331189047119)--
(0.842273552964344,-1.41189596393529)--
(0.818711994874345,-1.37654516797450)--
(0.793892626146237,-1.33732155783022)--
(0.767913397489498,-1.29434826533703)--
(0.740876837050858,-1.24780755706014)--
(0.712889645782536,-1.19793795984268)--
(0.684062276342339,-1.14503049909501)--
(0.654508497187474,-1.08942410915006)--
(0.624344943582428,-1.03150028806647)--
(0.593690657292862,-0.971677081176312)--
(0.562666616782152,-0.910402488260458)--
(0.531395259764657,-0.848147398324422)--
(0.500000000000000,-0.785398163397448)--
(0.468604740235343,-0.722648928470474)--
(0.437333383217848,-0.660393838534439)--
(0.406309342707138,-0.599119245618585)--
(0.375655056417573,-0.539296038728428)--
(0.345491502812526,-0.481372217644840)--
(0.315937723657661,-0.425765827699888)--
(0.287110354217464,-0.372858366952215)--
(0.259123162949142,-0.322988769734761)--
(0.232086602510502,-0.276448061457862)--
(0.206107373853763,-0.233474768964681)--
(0.181288005125655,-0.194251158820399)--
(0.157726447035656,-0.158900362859605)--
(0.135515686289294,-0.127484436323707)--
(0.114743378612105,-0.100003379212705)--
(0.0954915028125263,-0.0763951362851909)--
(0.0778360372489926,-0.0565365757065764)--
(0.0618466599780683,-0.0402454309116796)--
(0.0475864737669903,-0.0272831750573382)--
(0.0351117570558743,-0.0173587827332153)--
(0.0244717418524232,-0.0101333196063714)--
(0.0157084194356845,-0.00522528761816290)--
(0.00885637463565569,-0.00221664143652411)--
(0.00394264934276112,-0.000659381280582030)--
(0.000986635785864221,-0.0000826181448217722)--
(0.000000000000000,0.000000000000000)--
(0.000986635785864221,0.0000826181448218902)--
(0.00394264934276106,0.000659381280582148)--
(0.00885637463565569,0.00221664143652424)--
(0.0157084194356845,0.00522528761816282)--
(0.0244717418524232,0.0101333196063714)--
(0.0351117570558744,0.0173587827332153)--
(0.0475864737669902,0.0272831750573384)--
(0.0618466599780682,0.0402454309116798)--
(0.0778360372489924,0.0565365757065766)--
(0.0954915028125263,0.0763951362851909)--
(0.114743378612105,0.100003379212705)--
(0.135515686289294,0.127484436323707)--
(0.157726447035656,0.158900362859605)--
(0.181288005125655,0.194251158820399)--
(0.206107373853763,0.233474768964681)--
(0.232086602510502,0.276448061457862)--
(0.259123162949142,0.322988769734761)--
(0.287110354217464,0.372858366952216)--
(0.315937723657661,0.425765827699888)--
(0.345491502812526,0.481372217644840)--
(0.375655056417572,0.539296038728427)--
(0.406309342707138,0.599119245618585)--
(0.437333383217848,0.660393838534439)--
(0.468604740235343,0.722648928470474)--
(0.500000000000000,0.785398163397448)--
(0.531395259764656,0.848147398324422)--
(0.562666616782152,0.910402488260458)--
(0.593690657292862,0.971677081176312)--
(0.624344943582427,1.03150028806647)--
(0.654508497187474,1.08942410915006)--
(0.684062276342339,1.14503049909501)--
(0.712889645782536,1.19793795984268)--
(0.740876837050857,1.24780755706014)--
(0.767913397489498,1.29434826533703)--
(0.793892626146237,1.33732155783022)--
(0.818711994874345,1.37654516797450)--
(0.842273552964344,1.41189596393529)--
(0.864484313710706,1.44331189047119)--
(0.885256621387895,1.47079294758219)--
(0.904508497187474,1.49440119050971)--
(0.922163962751008,1.51425975108832)--
(0.938153340021932,1.53055089588322)--
(0.952413526233010,1.54351315173756)--
(0.964888242944126,1.55343754406168)--
(0.975528258147577,1.56066300718853)--
(0.984291580564316,1.56557103917673)--
(0.991143625364344,1.56857968535837)--
(0.996057350657239,1.57013694551431)--
(0.999013364214136,1.57071370865007);
\draw [dashed] (0,0) circle (1.86209588911859cm);
\draw [thick] (0.131719452433182, 1.85743131397181) -- (-1.21714709935872,
-1.40923881538758);
\end{tikzpicture}
\end{center}

For small $n$, there is a finite list of such ``Reidemeister moves'' that generate the equivalence relation on exact Lagrangians given by Hamiltonian isotopy.  We warn that this is not so for $n \geq 7$.

\subsection{Compact and noncompact Lagrangians}
\label{intro:compact-noncompact}
The discussion of \S\ref{intro:eLaw} applies more generally with $\bR^{2n}$ replaced by the cotangent bundle of a manifold $M$ --- if the $x_i$ are local coordinates on $M$, then \eqref{intro:alpha} is independent of those coordinates.  The wavefront map of an exact Lagrangian $L \subset T^* M$ takes values in $M \times \bR$, and the Proposition of \S\ref{intro:isotopies} holds in this setting.  The Arnold conjecture predicts that the only compact exact Lagrangian in $T^* M$, up to Hamiltonian isotopy, is the zero section $M$.  Floer theory \cite{Ab-Nearby,Kr}, and microlocal sheaf theory \cite{Guillermou}, have been used to tightly circumscribe any potential counterexamples to this conjecture --- a strong result along these lines is that any compact exact Lagrangian in $T^* M$ must be homotopy equivalent to $M$.  The theory that we develop here is inspired by this work, but it could not be used to deduce any new results about compact $L$.  Our aim is different.

Noncompact exact Lagrangians are abundant.  In dimension 4, they are associated with a rich combinatorics of cluster algebras \cite{STWZ}.  In some complex varieties, they are associated with perverse sheaves, especially with tilting sheaves \cite{J-Perverse}, \cite{J-Tilting}.  These examples have been studied by us elsewhere using Nadler-Zaslow's \eqref{eq:one}.  Our results here remove the dependence of Floer theory, in the same spirit of some of Tamarkin's work giving sheaf-theoretic alternatives for the classic applications of Floer theory.

\subsection{Coefficients}
\label{intro:coefficients}
The Floer-theoretic constructions of \cite{NZ} give a functor \eqref{eq:one} from local systems of $\bk$-modules to constructible sheaves of $\bk$-modules, where $\bk$ is any commutative ring.  Our version is sufficiently soft that we may replace $\bk$ by a ring spectrum --- but this requires us to revisit the notion of a brane obstruction.  We discuss this briefly here, and more in \S\ref{intro:mumon}.

(We remark that our discussion only requires $\bk$ to be $\mathrm{E}_2$-commutative, in other words commutative enough that we may speak of ``$\bk$-linear'' stable $\infty$-categories.\footnote{Even this assumption can be relaxed: if $\bk'$ is an associative $\bk$-algebra, then \eqref{eq:one} induces a functor from left $\bk'$-module objects in $\Loc(L;\bk)$ to left $\bk'$-module objects in $\Sh_{\Lambda}(M;\bk)$.}  But in this paper we do not explore this in any significant way.)

\begin{quote}
{\bf Proposition/Definition.}  Let $\bk$ be an $\mathrm{E}_2$-commutative ring spectrum, and let $L \subset T^* M$ be an exact Lagrangian.  Then $L$ carries a canonical locally constant sheaf of $\bk$-linear categories $\Brane_L$, whose fiber is equivalent to $\Mod(\bk)$.  We say that the \emph{brane obstruction of $L$ vanishes} with coefficients in $\bk$ if this locally constant sheaf is constant, i.e. is equivalent to a constant sheaf.
\end{quote}

What we will construct canonically is a functor $\Gamma(L,\Brane_L) \to \Sh_{\Lambda}(M,\bk)$.  A trivialization of $\Brane_L$ gives an equivalence $\Gamma(L,\Brane_L) \cong \Loc(L;\bk)$.  In general, $\Gamma(L,\Brane_L)$ can be identified with the category of modules over a famous associative $\bk$-algebra spectrum --- the Thom spectrum of the map $L \to B \Pic(\bk)$ that classifies $\Brane_L$.

The sheaf of categories, and something more general, is an essentially standard construction in microlocal sheaf theory.  (In \cite{TZ} one of us has called it the ``Kashiwara-Schapira sheaf;'' in \cite{Guillermou} it is called the ``Kashiwara-Schapira stack.'')  The foundational text \cite{KS} of the theory is significantly older than the foundational text of $\infty$-category theory \cite{higher-topoi}, which we require to make sense of the Proposition above.  In \S\ref{sec:som} we give some details about how to adapt microlocal sheaf theory to treat sheaves of spectra.

\subsection{Singular support and front projections}
\label{intro:ssafp}
To have a good theory of noncompact exact Lagrangians \S\ref{intro:compact-noncompact}, we impose boundary conditions: we require that $L$ should be asymptotic to a Legendrian submanifold $\Lambda \subset T^{\infty} M$.  Here $T^{\infty} M$ is a cosphere bundle of infinite radius, the boundary of $T^* M$ with its natural contact structure.  We will denote the projection of a Legendrian $\Lambda \subset T^{\infty}M$ by $\Phi \subset M$, and often make the following hypothesis:
\begin{equation}
\label{eq:finite-to-one}
\text{$\Lambda \to \Phi$ is finite-to-one and over $\Phi^{sm}$ (the smooth locus of $\Phi$) it is one-to-one}.
\end{equation}
When \eqref{eq:finite-to-one} holds, $\Lambda$ induces a coorientation on the smooth parts of $\Phi$, and $\Lambda$ can be recovered from $\Phi$ as its conormal lift --- in that case we call $\Phi$ the ``front projection'' of $\Lambda$.  The wavefront projection of an exact Lagrangian (the situation of \S\ref{intro:eLaw}) can be treated as a special case --- see \S\ref{subsec:conic-bL}.

(Throughout the paper, we use ``front projection" for Legendrians and ``wavefront projection" for exact Lagrangians, though this is not at all standard or historical.)

Suppose $F$ is a sheaf on $M$ with singular support in $\Lambda$ --- or rather, in the conic Lagrangian subset of $T^* M$ obtained by taking the union of the zero section $\zeta_M$ with the cone over $\Lambda$.  The theory of singular supports was introduced and developed by Kashiwara and Schapira \cite{KS}.  One gets a sense of what it means to have singular support in $\Lambda$ by studying the structure of $F$ near the smooth points, multiple points, and singular points of the front $\Phi$. 

\subsubsection{Smooth part}
\label{intro:smooth-part}
The front projection breaks $M$ into chambers, the connected components of $M - \Phi$, on which $F$ is locally constant.  At a smooth point $x \in \Phi$ a small ball around $x$ will be broken into two contractible chambers, $R_1$ and $R_2$, so that $F$ is constant along $R_1$ and $R_2$.  One can use the restriction maps of $F$ to give a map $F(R_1) \to F(R_2)$ against the co-orientation of $\Phi$.
\begin{equation}
\label{eq:smooth-part}
\begin{tikzpicture}
\draw [dashed] (0,0) circle (1cm);
\draw [thick, rounded corners] (-1,0)--(-1/4,1/10)--(1/4,1/10)--(1,0);
\draw [thick, gray] (-1/2,1/20)--(-.47,-.09);
\draw [thick, gray] (1/2,1/20)--(.47,-.09);
\draw [thick, gray] (0,1/10)--(0,-.065);
\node at (0,-.5) {$R_1$};
\node at (0,.5) {$R_2$};
\node at (4,0) {$F(R_1) \to F(R_2)$};
\end{tikzpicture}
\end{equation}
The singular support condition implies that these restriction maps (which we often imagine as drawn on $M$, as in the diagram \eqref{eq:damp}) completely determine $F$ near a smooth point.  They vary in a locally constant fashion along the smooth part of $\Phi$.  

\subsubsection{Double and multiple points}
\label{intro:damp}
Near a double point of $\Phi$ (i.e., a point where $\Lambda \to \Phi$ is two-to-one, but still an immersion), we get a square of restriction maps
\begin{equation}
\label{eq:damp}
\begin{tikzpicture}
\draw [dashed] (0,0) circle (3cm);
\node at (0,0) {$\xymatrix{& F(R_{22}) \\ F(R_{12}) \ar[ur] & & F(R_{21}) \ar[ul] \\ & F(R_{11}) \ar[ur] \ar[ul]}$};
\draw [thick,  domain=-100:100, samples=40] 
 plot ({\x/38}, {1.5*sin(\x)} );
 \foreach \p in {1,2,3}
 \draw[thick, gray]  ({\p*20/38}, {1.5*sin(\p*20)})--({\p*20/38+3*(5/(\p+3))*(cos(\p*20)*1.5)/sqrt((cos(\p*20)*38*1.5)^2+1)}, {1.5*sin(\p*20)-3*(5/(\p+3))*1.5/sqrt((cos(\p*20)*38*1.5)^2+1)});
  \foreach \p in {-1,-2,-3}
 \draw[thick, gray]  ({\p*20/38}, {1.5*sin(\p*20)})--({\p*20/38+3*(5/(-\p+3))*(cos(\p*20)*1.5)/sqrt((cos(\p*20)*38*1.5)^2+1)}, {1.5*sin(\p*20)-3*(5/(-\p+3))*1.5/sqrt((cos(\p*20)*38*1.5)^2+1)});
  \foreach \p in {1,2,3}
 \draw[thick, gray]  ({\p*20/38}, {-1.5*sin(\p*20)})--({\p*20/38-3*(5/(\p+3))*(cos(\p*20)*1.5)/sqrt((cos(\p*20)*38*1.5)^2+1)}, {-1.5*sin(\p*20)-3*(5/(\p+3))*1.5/sqrt((cos(\p*20)*38*1.5)^2+1)});
  \foreach \p in {-1,-2,-3}
 \draw[thick, gray]  ({\p*20/38}, {-1.5*sin(\p*20)})--({\p*20/38-3*(5/(-\p+3))*(cos(\p*20)*1.5)/sqrt((cos(\p*20)*38*1.5)^2+1)}, {-1.5*sin(\p*20)-3*(5/(-\p+3))*1.5/sqrt((cos(\p*20)*38*1.5)^2+1)});
\draw [thick,  domain=-100:100, samples=40] 
 plot ({\x/38}, {-1.5*sin(\x)} );
 \end{tikzpicture}
\end{equation}
The sheaf structure of $F$ makes this a commutative square (even in the $\infty$-categorical sense: a functor from $\Delta^1 \times \Delta^1$ to $\Mod(\bk)$); the singular support condition implies that it is \emph{exact}, i.e. it is both a pushout and a pullback square in the stable $\infty$-category $\Mod(\bk)$.  Near a multiple point of degree $n$ (i.e. near a point where $n$ branches intersecting transversely, which is only possible when $\dim M\geq n$) the picture is similar: there, one has an exact $n$-cube, i.e. a commutative $n$-cube all of whose square faces are exact.

\subsubsection{Singular points of a front}
\label{intro:spoaf}
If $\Lambda \to \Phi \to M$ fails to be an immersion at $x \in \Lambda$, the image of $x$ in $M$ is called a singular point of $\Phi$.  Here are two examples, where $\Lambda$ has dimension $1$ and $2$:
\begin{center}
\begin{tikzpicture}
\draw [thick] (1.00000000000000,-1.57079632679490)--
(0.998026728428272,-1.57071370865007)--
(0.992114701314478,-1.57013694551431)--
(0.982287250728689,-1.56857968535837)--
(0.968583161128631,-1.56557103917673)--
(0.951056516295154,-1.56066300718853)--
(0.929776485888251,-1.55343754406168)--
(0.904827052466019,-1.54351315173756)--
(0.876306680043864,-1.53055089588322)--
(0.844327925502015,-1.51425975108832)--
(0.809016994374947,-1.49440119050971)--
(0.770513242775789,-1.47079294758219)--
(0.728968627421412,-1.44331189047119)--
(0.684547105928689,-1.41189596393529)--
(0.637423989748690,-1.37654516797450)--
(0.587785252292473,-1.33732155783022)--
(0.535826794978997,-1.29434826533703)--
(0.481753674101715,-1.24780755706014)--
(0.425779291565073,-1.19793795984268)--
(0.368124552684678,-1.14503049909501)--
(0.309016994374947,-1.08942410915006)--
(0.248689887164855,-1.03150028806647)--
(0.187381314585725,-0.971677081176312)--
(0.125333233564304,-0.910402488260458)--
(0.0627905195293135,-0.848147398324422)--
(0.000000000000000,-0.785398163397448)--
(-0.0627905195293134,-0.722648928470474)--
(-0.125333233564304,-0.660393838534439)--
(-0.187381314585725,-0.599119245618585)--
(-0.248689887164855,-0.539296038728428)--
(-0.309016994374947,-0.481372217644840)--
(-0.368124552684678,-0.425765827699888)--
(-0.425779291565073,-0.372858366952215)--
(-0.481753674101715,-0.322988769734761)--
(-0.535826794978997,-0.276448061457862)--
(-0.587785252292473,-0.233474768964681)--
(-0.637423989748690,-0.194251158820399)--
(-0.684547105928689,-0.158900362859605)--
(-0.728968627421411,-0.127484436323707)--
(-0.770513242775789,-0.100003379212705)--
(-0.809016994374947,-0.0763951362851909)--
(-0.844327925502015,-0.0565365757065764)--
(-0.876306680043863,-0.0402454309116796)--
(-0.904827052466019,-0.0272831750573382)--
(-0.929776485888251,-0.0173587827332153)--
(-0.951056516295154,-0.0101333196063714)--
(-0.968583161128631,-0.00522528761816290)--
(-0.982287250728689,-0.00221664143652411)--
(-0.992114701314478,-0.000659381280582030)--
(-0.998026728428272,-0.0000826181448217722)--
(-1.00000000000000,0.000000000000000)--
(-0.998026728428272,0.0000826181448218902)--
(-0.992114701314478,0.000659381280582148)--
(-0.982287250728689,0.00221664143652424)--
(-0.968583161128631,0.00522528761816282)--
(-0.951056516295154,0.0101333196063714)--
(-0.929776485888251,0.0173587827332153)--
(-0.904827052466020,0.0272831750573384)--
(-0.876306680043864,0.0402454309116798)--
(-0.844327925502015,0.0565365757065766)--
(-0.809016994374947,0.0763951362851909)--
(-0.770513242775789,0.100003379212705)--
(-0.728968627421412,0.127484436323707)--
(-0.684547105928689,0.158900362859605)--
(-0.637423989748690,0.194251158820399)--
(-0.587785252292473,0.233474768964681)--
(-0.535826794978996,0.276448061457862)--
(-0.481753674101715,0.322988769734761)--
(-0.425779291565072,0.372858366952216)--
(-0.368124552684679,0.425765827699888)--
(-0.309016994374947,0.481372217644840)--
(-0.248689887164855,0.539296038728427)--
(-0.187381314585725,0.599119245618585)--
(-0.125333233564305,0.660393838534439)--
(-0.0627905195293132,0.722648928470474)--
(0.000000000000000,0.785398163397448)--
(0.0627905195293128,0.848147398324422)--
(0.125333233564304,0.910402488260458)--
(0.187381314585724,0.971677081176312)--
(0.248689887164855,1.03150028806647)--
(0.309016994374947,1.08942410915006)--
(0.368124552684678,1.14503049909501)--
(0.425779291565072,1.19793795984268)--
(0.481753674101715,1.24780755706014)--
(0.535826794978996,1.29434826533703)--
(0.587785252292473,1.33732155783022)--
(0.637423989748689,1.37654516797450)--
(0.684547105928689,1.41189596393529)--
(0.728968627421411,1.44331189047119)--
(0.770513242775789,1.47079294758219)--
(0.809016994374947,1.49440119050971)--
(0.844327925502015,1.51425975108832)--
(0.876306680043864,1.53055089588322)--
(0.904827052466020,1.54351315173756)--
(0.929776485888251,1.55343754406168)--
(0.951056516295154,1.56066300718853)--
(0.968583161128631,1.56557103917673)--
(0.982287250728689,1.56857968535837)--
(0.992114701314478,1.57013694551431)--
(0.998026728428272,1.57071370865007);
\draw [dashed] (0,0) circle (1.86209588911859cm);
\end{tikzpicture}
\tdplotsetmaincoords{70}{180}
\begin{tikzpicture}[tdplot_main_coords][scale=0.75]
\draw [thick]
(-2.40343084664080, 1.79541643231187, 2.77942286340599) --
(-2.51142335543284, 1.64096091659322, 2.77596375714903) --
(-2.60950442360117, 1.48002927782051, 2.76561714165016) --
(-2.69728696997873, 1.31325663964930, 2.74847485420102) --
(-2.77442455695897, 1.14130117749129, 2.72468905095858) --
(-2.84061275772692, 0.964841520996624, 2.69447085639632) --
(-2.89559035769264, 0.784574075812750, 2.65808848934842) --
(-2.93914038538594, 0.601210275189442, 2.61586488228046) --
(-2.97109096874365, 0.415473772276811, 2.56817481491793) --
(-2.99131601341007, 0.228097584196913, 2.51544158767492) --
(-2.99973570037386, 0.0398211991600644, 2.45813326440961) --
(-2.99631680097715, -0.148612341957294, 2.39675851785640) --
(-2.98107280805393, -0.336459378055468, 2.33186211461063) --
(-2.95406388267993, -0.522978562702124, 2.26402007974178) --
(-2.91539661674441, -0.707433789887975, 2.19383458395414) --
(-2.86522361228062, -0.889097099100892, 2.12192859867704) --
(-2.80374287921531, -1.06725154825349, 2.04894036652629) --
(-2.73119705391401, -1.24119404312598, 2.02448226378243) --
(-2.64787244160617, -1.41023811215787, 2.09768758778462) --
(-2.55409788646915, -1.57371661563758, 2.17002582984196) --
(-2.45024347383058, -1.73098437859811, 2.24085491059684) --
(-2.33671906961055, -1.88142073702784, 2.30954614611421) --
(-2.21397270276817, -2.02443198734791, 2.37548982812472) --
(-2.08248879713588, -2.15945372948891, 2.43810063583842) --
(-1.94278625961993, -2.28595309432018, 2.49682283129360) --
(-1.79541643231187, -2.40343084664080, 2.55113519212621) --
(-1.64096091659322, -2.51142335543284, 2.60055563797654) --
(-1.48002927782051, -2.60950442360117, 2.64464550946855) --
(-1.31325663964930, -2.69728696997873, 2.68301346178159) --
(-1.14130117749129, -2.77442455695897, 2.71531893825471) --
(-0.964841520996625, -2.84061275772692, 2.74127519319170) --
(-0.784574075812751, -2.89559035769264, 2.76065183703620) --
(-0.601210275189442, -2.93914038538594, 2.77327688132571) --
(-0.415473772276808, -2.97109096874365, 2.77903826527334) --
(-0.228097584196912, -2.99131601341007, 2.77788485042728) --
(-0.0398211991600645, -2.99973570037386, 2.76982687457938) --
(0.148612341957293, -2.99631680097715, 2.75493586089371) --
(0.336459378055468, -2.98107280805393, 2.73334398306194) --
(0.522978562702123, -2.95406388267993, 2.70524289212027) --
(0.707433789887975, -2.91539661674441, 2.67088201534129) --
(0.889097099100892, -2.86522361228062, 2.63056634229983) --
(1.06725154825348, -2.80374287921531, 2.58465371776391) --
(1.24119404312597, -2.73119705391402, 2.53355166543923) --
(1.41023811215787, -2.64787244160617, 2.47771377075993) --
(1.57371661563758, -2.55409788646915, 2.41763565483219) --
(1.73098437859811, -2.45024347383058, 2.35385057526612) --
(1.88142073702784, -2.33671906961055, 2.28692469294354) --
(2.02443198734791, -2.21397270276817, 2.21745204673390) --
(2.15945372948891, -2.08248879713588, 2.14604928076318) --
(2.28595309432018, -1.94278625961993, 2.07335017103703) --
(2.40343084664080, -1.79541643231187, 2.00000000000000) --
(2.51142335543284, -1.64096091659322, 2.07335017103703) --
(2.60950442360117, -1.48002927782051, 2.14604928076318) --
(2.69728696997873, -1.31325663964930, 2.21745204673390) --
(2.77442455695897, -1.14130117749129, 2.28692469294354) --
(2.84061275772692, -0.964841520996625, 2.35385057526612) --
(2.89559035769264, -0.784574075812749, 2.41763565483219) --
(2.93914038538594, -0.601210275189445, 2.47771377075993) --
(2.97109096874365, -0.415473772276811, 2.53355166543923) --
(2.99131601341007, -0.228097584196912, 2.58465371776391) --
(2.99973570037386, -0.0398211991600647, 2.63056634229983) --
(2.99631680097715, 0.148612341957292, 2.67088201534129) --
(2.98107280805393, 0.336459378055467, 2.70524289212027) --
(2.95406388267993, 0.522978562702123, 2.73334398306194) --
(2.91539661674441, 0.707433789887975, 2.75493586089371) --
(2.86522361228062, 0.889097099100891, 2.76982687457938) --
(2.80374287921531, 1.06725154825349, 2.77788485042728) --
(2.73119705391401, 1.24119404312598, 2.77903826527334) --
(2.64787244160617, 1.41023811215787, 2.77327688132571) --
(2.55409788646915, 1.57371661563758, 2.76065183703620) --
(2.45024347383058, 1.73098437859811, 2.74127519319170) --
(2.33671906961055, 1.88142073702784, 2.71531893825471) --
(2.21397270276817, 2.02443198734791, 2.68301346178159) --
(2.08248879713588, 2.15945372948891, 2.64464550946855) --
(1.94278625961993, 2.28595309432018, 2.60055563797654) --
(1.79541643231187, 2.40343084664080, 2.55113519212621) --
(1.64096091659322, 2.51142335543284, 2.49682283129360) --
(1.48002927782051, 2.60950442360117, 2.43810063583842) --
(1.31325663964930, 2.69728696997873, 2.37548982812472) --
(1.14130117749129, 2.77442455695897, 2.30954614611421) --
(0.964841520996625, 2.84061275772692, 2.24085491059684) --
(0.784574075812749, 2.89559035769264, 2.17002582984196) --
(0.601210275189445, 2.93914038538594, 2.09768758778462) --
(0.415473772276811, 2.97109096874365, 2.02448226378243) --
(0.228097584196915, 2.99131601341007, 2.04894036652629) --
(0.0398211991600649, 2.99973570037386, 2.12192859867704) --
(-0.148612341957292, 2.99631680097715, 2.19383458395414) --
(-0.336459378055467, 2.98107280805393, 2.26402007974178) --
(-0.522978562702126, 2.95406388267993, 2.33186211461063) --
(-0.707433789887975, 2.91539661674441, 2.39675851785640) --
(-0.889097099100891, 2.86522361228062, 2.45813326440961) --
(-1.06725154825348, 2.80374287921531, 2.51544158767492) --
(-1.24119404312597, 2.73119705391402, 2.56817481491793) --
(-1.41023811215787, 2.64787244160617, 2.61586488228046) --
(-1.57371661563758, 2.55409788646916, 2.65808848934842) --
(-1.73098437859811, 2.45024347383058, 2.69447085639632) --
(-1.88142073702784, 2.33671906961055, 2.72468905095858) --
(-2.02443198734791, 2.21397270276817, 2.74847485420102) --
(-2.15945372948891, 2.08248879713588, 2.76561714165016) --
(-2.28595309432018, 1.94278625961993, 2.77596375714903) --
(-2.40343084664080, 1.79541643231187, 2.77942286340599);

\draw[thick]
(-1.60228723109387, 1.19694428820791, 2.42426406871193) --
(-1.67428223695523, 1.09397394439548, 2.42238116975734) --
(-1.73966961573412, 0.986686185213674, 2.41674918563801) --
(-1.79819131331915, 0.875504426432869, 2.40741810624367) --
(-1.84961637130598, 0.760867451660858, 2.39447075489563) --
(-1.89374183848461, 0.643227680664416, 2.37802205320103) --
(-1.93039357179509, 0.523049383875166, 2.35821800100059) --
(-1.95942692359063, 0.400806850126295, 2.33523438046363) --
(-1.98072731249577, 0.276982514851207, 2.30927519583316) --
(-1.99421067560672, 0.152065056131276, 2.28057086266969) --
(-1.99982380024924, 0.0265474661067096, 2.24937616266647) --
(-1.99754453398477, -0.0990748946381960, 2.21596798218913) --
(-1.98738187203595, -0.224306252036978, 2.18064285461268) --
(-1.96937592178662, -0.348652375134749, 2.14371432827028) --
(-1.94359774449627, -0.471622526591983, 2.10551018337607) --
(-1.91014907485374, -0.592731399400595, 2.06636952262474) --
(-1.86916191947687, -0.711501032168991, 2.02663976129204) --
(-1.82079803594268, -0.827462695417318, 2.01332645644782) --
(-1.76524829440411, -0.940158741438578, 2.05317438761681) --
(-1.70273192431277, -1.04914441042505, 2.09255033915691) --
(-1.63349564922038, -1.15398958573207, 2.13110480733465) --
(-1.55781271307370, -1.25428049135190, 2.16849557996107) --
(-1.47598180184545, -1.34962132489861, 2.20439077389131) --
(-1.38832586475725, -1.43963581965927, 2.23847178084289) --
(-1.29519083974662, -1.52396872954679, 2.27043609538537) --
(-1.19694428820791, -1.60228723109387, 2.30000000000000) --
(-1.09397394439548, -1.67428223695523, 2.32690108337648) --
(-0.986686185213674, -1.73966961573412, 2.35090056959432) --
(-0.875504426432867, -1.79819131331915, 2.37178543751485) --
(-0.760867451660858, -1.84961637130598, 2.38937031157188) --
(-0.643227680664416, -1.89374183848461, 2.40349910717837) --
(-0.523049383875167, -1.93039357179509, 2.41404641614430) --
(-0.400806850126295, -1.95942692359063, 2.42091861980860) --
(-0.276982514851206, -1.98072731249577, 2.42405472000503) --
(-0.152065056131275, -1.99421067560672, 2.42342688048623) --
(-0.0265474661067097, -1.99982380024924, 2.41904067400023) --
(0.0990748946381950, -1.99754453398477, 2.41093503282629) --
(0.224306252036978, -1.98738187203595, 2.39918190320933) --
(0.348652375134749, -1.96937592178662, 2.38388560675986) --
(0.471622526591983, -1.94359774449627, 2.36518191448805) --
(0.592731399400594, -1.91014907485374, 2.34323684169061) --
(0.711501032168990, -1.86916191947687, 2.31824517438728) --
(0.827462695417316, -1.82079803594268, 2.29042874038628) --
(0.940158741438578, -1.76524829440411, 2.26003444032505) --
(1.04914441042505, -1.70273192431277, 2.22733205616266) --
(1.15398958573207, -1.63349564922038, 2.19261185657606) --
(1.25428049135190, -1.55781271307370, 2.15618202051476) --
(1.34962132489860, -1.47598180184545, 2.11836590178264) --
(1.43963581965927, -1.38832586475725, 2.07949915892673) --
(1.52396872954679, -1.29519083974662, 2.03992677590813) --
(1.60228723109387, -1.19694428820791, 2.00000000000000) --
(1.67428223695523, -1.09397394439548, 2.03992677590813) --
(1.73966961573412, -0.986686185213674, 2.07949915892673) --
(1.79819131331915, -0.875504426432867, 2.11836590178264) --
(1.84961637130598, -0.760867451660858, 2.15618202051476) --
(1.89374183848461, -0.643227680664416, 2.19261185657606) --
(1.93039357179509, -0.523049383875166, 2.22733205616266) --
(1.95942692359063, -0.400806850126297, 2.26003444032505) --
(1.98072731249577, -0.276982514851208, 2.29042874038628) --
(1.99421067560672, -0.152065056131275, 2.31824517438728) --
(1.99982380024924, -0.0265474661067098, 2.34323684169061) --
(1.99754453398477, 0.0990748946381949, 2.36518191448805) --
(1.98738187203595, 0.224306252036978, 2.38388560675986) --
(1.96937592178662, 0.348652375134749, 2.39918190320933) --
(1.94359774449627, 0.471622526591983, 2.41093503282629) --
(1.91014907485374, 0.592731399400594, 2.41904067400023) --
(1.86916191947687, 0.711501032168991, 2.42342688048623) --
(1.82079803594268, 0.827462695417317, 2.42405472000503) --
(1.76524829440411, 0.940158741438580, 2.42091861980860) --
(1.70273192431277, 1.04914441042505, 2.41404641614430) --
(1.63349564922038, 1.15398958573207, 2.40349910717837) --
(1.55781271307370, 1.25428049135190, 2.38937031157188) --
(1.47598180184545, 1.34962132489860, 2.37178543751485) --
(1.38832586475725, 1.43963581965927, 2.35090056959432) --
(1.29519083974662, 1.52396872954679, 2.32690108337648) --
(1.19694428820791, 1.60228723109387, 2.30000000000000) --
(1.09397394439548, 1.67428223695522, 2.27043609538537) --
(0.986686185213674, 1.73966961573412, 2.23847178084289) --
(0.875504426432869, 1.79819131331915, 2.20439077389131) --
(0.760867451660858, 1.84961637130598, 2.16849557996107) --
(0.643227680664417, 1.89374183848461, 2.13110480733465) --
(0.523049383875166, 1.93039357179509, 2.09255033915691) --
(0.400806850126297, 1.95942692359063, 2.05317438761681) --
(0.276982514851208, 1.98072731249577, 2.01332645644782) --
(0.152065056131277, 1.99421067560672, 2.02663976129204) --
(0.0265474661067099, 1.99982380024924, 2.06636952262474) --
(-0.0990748946381947, 1.99754453398477, 2.10551018337607) --
(-0.224306252036978, 1.98738187203595, 2.14371432827028) --
(-0.348652375134750, 1.96937592178662, 2.18064285461268) --
(-0.471622526591983, 1.94359774449627, 2.21596798218913) --
(-0.592731399400594, 1.91014907485374, 2.24937616266647) --
(-0.711501032168989, 1.86916191947687, 2.28057086266969) --
(-0.827462695417316, 1.82079803594268, 2.30927519583316) --
(-0.940158741438579, 1.76524829440411, 2.33523438046363) --
(-1.04914441042505, 1.70273192431277, 2.35821800100059) --
(-1.15398958573207, 1.63349564922038, 2.37802205320103) --
(-1.25428049135190, 1.55781271307370, 2.39447075489563) --
(-1.34962132489861, 1.47598180184545, 2.40741810624367) --
(-1.43963581965927, 1.38832586475725, 2.41674918563801) --
(-1.52396872954679, 1.29519083974662, 2.42238116975734) --
(-1.60228723109387, 1.19694428820791, 2.42426406871193);

\draw [thick]
(-0.801143615546934, 0.598472144103957, 2.15000000000000) --
(-0.837141118477613, 0.546986972197740, 2.14933429469046) --
(-0.869834807867058, 0.493343092606837, 2.14734308760930) --
(-0.899095656659576, 0.437752213216434, 2.14404405285154) --
(-0.924808185652991, 0.380433725830429, 2.13946647288324) --
(-0.946870919242306, 0.321613840332208, 2.13365097862826) --
(-0.965196785897545, 0.261524691937583, 2.12664918882530) --
(-0.979713461795315, 0.200403425063147, 2.11852325185635) --
(-0.990363656247883, 0.138491257425604, 2.10934529411321) --
(-0.997105337803358, 0.0760325280656378, 2.09919677979855) --
(-0.999911900124620, 0.0132737330533548, 2.08816778784387) --
(-0.998772266992385, -0.0495374473190980, 2.07635621236256) --
(-0.993690936017976, -0.112153126018489, 2.06386689373476) --
(-0.984687960893310, -0.174326187567375, 2.05081068803679) --
(-0.971798872248136, -0.235811263295992, 2.03730348307473) --
(-0.955074537426872, -0.296365699700297, 2.02346516975603) --
(-0.934580959738436, -0.355750516084495, 2.00941857792940) --
(-0.910399017971338, -0.413731347708659, 2.00471161386172) --
(-0.882624147202056, -0.470079370719289, 2.01879998503465) --
(-0.851365962156384, -0.524572205212527, 2.03272148620948) --
(-0.816747824610192, -0.576994792866035, 2.04635254915624) --
(-0.778906356536851, -0.627140245675948, 2.05957218359522) --
(-0.737990900922724, -0.674810662449303, 2.07226305111526) --
(-0.694162932378627, -0.719817909829636, 2.08431250667782) --
(-0.647595419873309, -0.761984364773394, 2.09561359846230) --
(-0.598472144103957, -0.801143615546934, 2.10606601717798) --
(-0.546986972197741, -0.837141118477613, 2.11557698641637) --
(-0.493343092606837, -0.869834807867058, 2.12406208614118) --
(-0.437752213216434, -0.899095656659576, 2.13144600200658) --
(-0.380433725830429, -0.924808185652992, 2.13766319385260) --
(-0.321613840332208, -0.946870919242306, 2.14265847744427) --
(-0.261524691937584, -0.965196785897545, 2.14638751429081) --
(-0.200403425063147, -0.979713461795315, 2.14881720519717) --
(-0.138491257425603, -0.990363656247883, 2.14992598405486) --
(-0.0760325280656374, -0.997105337803358, 2.14970400926424) --
(-0.0132737330533548, -0.999911900124620, 2.14815325108927) --
(0.0495374473190975, -0.998772266992385, 2.14528747416929) --
(0.112153126018489, -0.993690936017976, 2.14113211534313) --
(0.174326187567374, -0.984687960893310, 2.13572405786990) --
(0.235811263295992, -0.971798872248136, 2.12911130405059) --
(0.296365699700297, -0.955074537426872, 2.12135254915624) --
(0.355750516084495, -0.934580959738436, 2.11251666044457) --
(0.413731347708658, -0.910399017971338, 2.10268206588930) --
(0.470079370719289, -0.882624147202056, 2.09193605804795) --
(0.524572205212527, -0.851365962156384, 2.08037401924685) --
(0.576994792866035, -0.816747824610192, 2.06809857496093) --
(0.627140245675948, -0.778906356536851, 2.05521868290270) --
(0.674810662449302, -0.737990900922724, 2.04184866590588) --
(0.719817909829636, -0.694162932378627, 2.02810719718786) --
(0.761984364773394, -0.647595419873309, 2.01411624699778) --
(0.801143615546934, -0.598472144103957, 2.00000000000000) --
(0.837141118477613, -0.546986972197741, 2.01411624699778) --
(0.869834807867058, -0.493343092606837, 2.02810719718786) --
(0.899095656659576, -0.437752213216434, 2.04184866590588) --
(0.924808185652991, -0.380433725830429, 2.05521868290270) --
(0.946870919242306, -0.321613840332208, 2.06809857496093) --
(0.965196785897546, -0.261524691937583, 2.08037401924685) --
(0.979713461795314, -0.200403425063148, 2.09193605804795) --
(0.990363656247883, -0.138491257425604, 2.10268206588930) --
(0.997105337803358, -0.0760325280656375, 2.11251666044457) --
(0.999911900124620, -0.0132737330533549, 2.12135254915624) --
(0.998772266992385, 0.0495374473190974, 2.12911130405059) --
(0.993690936017976, 0.112153126018489, 2.13572405786990) --
(0.984687960893310, 0.174326187567374, 2.14113211534313) --
(0.971798872248136, 0.235811263295992, 2.14528747416929) --
(0.955074537426872, 0.296365699700297, 2.14815325108927) --
(0.934580959738435, 0.355750516084496, 2.14970400926424) --
(0.910399017971338, 0.413731347708659, 2.14992598405486) --
(0.882624147202056, 0.470079370719290, 2.14881720519717) --
(0.851365962156385, 0.524572205212527, 2.14638751429081) --
(0.816747824610192, 0.576994792866035, 2.14265847744427) --
(0.778906356536851, 0.627140245675948, 2.13766319385260) --
(0.737990900922724, 0.674810662449302, 2.13144600200658) --
(0.694162932378627, 0.719817909829636, 2.12406208614118) --
(0.647595419873309, 0.761984364773394, 2.11557698641637) --
(0.598472144103957, 0.801143615546934, 2.10606601717798) --
(0.546986972197741, 0.837141118477612, 2.09561359846230) --
(0.493343092606837, 0.869834807867058, 2.08431250667782) --
(0.437752213216435, 0.899095656659575, 2.07226305111526) --
(0.380433725830429, 0.924808185652991, 2.05957218359522) --
(0.321613840332208, 0.946870919242306, 2.04635254915624) --
(0.261524691937583, 0.965196785897546, 2.03272148620948) --
(0.200403425063148, 0.979713461795314, 2.01879998503465) --
(0.138491257425604, 0.990363656247883, 2.00471161386172) --
(0.0760325280656384, 0.997105337803358, 2.00941857792940) --
(0.0132737330533550, 0.999911900124620, 2.02346516975603) --
(-0.0495374473190974, 0.998772266992385, 2.03730348307473) --
(-0.112153126018489, 0.993690936017976, 2.05081068803679) --
(-0.174326187567375, 0.984687960893310, 2.06386689373476) --
(-0.235811263295992, 0.971798872248136, 2.07635621236256) --
(-0.296365699700297, 0.955074537426872, 2.08816778784387) --
(-0.355750516084495, 0.934580959738436, 2.09919677979855) --
(-0.413731347708658, 0.910399017971339, 2.10934529411321) --
(-0.470079370719290, 0.882624147202056, 2.11852325185635) --
(-0.524572205212526, 0.851365962156385, 2.12664918882530) --
(-0.576994792866035, 0.816747824610192, 2.13365097862826) --
(-0.627140245675948, 0.778906356536851, 2.13946647288324) --
(-0.674810662449303, 0.737990900922723, 2.14404405285154) --
(-0.719817909829636, 0.694162932378627, 2.14734308760930) --
(-0.761984364773394, 0.647595419873309, 2.14933429469046) --
(-0.801143615546934, 0.598472144103957, 2.15000000000000);

\draw [thick]
(-2.40343084664080, 1.79541643231187, 1.22057713659401) --
(-2.51142335543284, 1.64096091659322, 1.22403624285097) --
(-2.60950442360117, 1.48002927782051, 1.23438285834984) --
(-2.69728696997873, 1.31325663964930, 1.25152514579898) --
(-2.77442455695897, 1.14130117749129, 1.27531094904142) --
(-2.84061275772692, 0.964841520996624, 1.30552914360368) --
(-2.89559035769264, 0.784574075812750, 1.34191151065158)--
(-2.93914038538594, 0.601210275189442, 1.38413511771954) --
(-2.97109096874365, 0.415473772276811, 1.43182518508207) --
(-2.99131601341007, 0.228097584196913, 1.48455841232508) --
(-2.99973570037386, 0.0398211991600644, 1.54186673559039) --
(-2.99631680097715, -0.148612341957294, 1.60324148214360) --
(-2.98107280805393, -0.336459378055468, 1.66813788538938) --
(-2.95406388267993, -0.522978562702124, 1.73597992025822) --
(-2.91539661674441, -0.707433789887975, 1.80616541604586) --
(-2.86522361228062, -0.889097099100892, 1.87807140132296)--
(-2.80374287921531, -1.06725154825349, 1.95105963347371);

\draw [thick]
(2.69728696997873, -1.31325663964930, 1.78254795326610) --
(2.77442455695897, -1.14130117749129, 1.71307530705646) --
(2.84061275772692, -0.964841520996625, 1.64614942473388) --
(2.89559035769264, -0.784574075812749, 1.58236434516781)--
(2.93914038538594, -0.601210275189445, 1.52228622924007) --
(2.97109096874365, -0.415473772276811, 1.46644833456077) --
(2.99131601341007, -0.228097584196912, 1.41534628223609) --
(2.99973570037386, -0.0398211991600647, 1.36943365770017) --
(2.99631680097715, 0.148612341957292, 1.32911798465871) --
(2.98107280805393, 0.336459378055467, 1.29475710787973) --
(2.95406388267993, 0.522978562702123, 1.26665601693806) --
(2.91539661674441, 0.707433789887975, 1.24506413910629) --
(2.86522361228062, 0.889097099100891, 1.23017312542062) --
(2.80374287921531, 1.06725154825349, 1.22211514957272) --
(2.73119705391401, 1.24119404312598, 1.22096173472666) --
(2.64787244160617, 1.41023811215787, 1.22672311867429) --
(2.55409788646915, 1.57371661563758, 1.23934816296380) --
(2.45024347383058, 1.73098437859811, 1.25872480680830) --
(2.33671906961055, 1.88142073702784, 1.28468106174529) --
(2.21397270276817, 2.02443198734791, 1.31698653821841) --
(2.08248879713588, 2.15945372948891, 1.35535449053145) --
(1.94278625961993, 2.28595309432018, 1.39944436202346) --
(1.79541643231187, 2.40343084664080, 1.44886480787378) --
(1.64096091659322, 2.51142335543284, 1.50317716870640) --
(1.48002927782051, 2.60950442360117, 1.56189936416158) --
(1.31325663964930, 2.69728696997873, 1.62451017187528) --
(1.14130117749129, 2.77442455695897, 1.69045385388579) --
(0.964841520996625, 2.84061275772692, 1.75914508940316) --
(0.784574075812749, 2.89559035769264, 1.82997417015804) --
(0.601210275189445, 2.93914038538594, 1.90231241221538) --
(0.415473772276811, 2.97109096874365, 1.97551773621757) --
(0.228097584196915, 2.99131601341007, 1.95105963347371) --
(0.0398211991600649, 2.99973570037386, 1.87807140132296) --
(-0.148612341957292, 2.99631680097715, 1.80616541604586) --
(-0.336459378055467, 2.98107280805393, 1.73597992025822) --
(-0.522978562702126, 2.95406388267993, 1.66813788538937) --
(-0.707433789887975, 2.91539661674441, 1.60324148214360) --
(-0.889097099100891, 2.86522361228062, 1.54186673559039) --
(-1.06725154825348, 2.80374287921531, 1.48455841232508) --
(-1.24119404312597, 2.73119705391402, 1.43182518508207) --
(-1.41023811215787, 2.64787244160617, 1.38413511771954) --
(-1.57371661563758, 2.55409788646916, 1.34191151065158) --
(-1.73098437859811, 2.45024347383058, 1.30552914360368) --
(-1.88142073702784, 2.33671906961055, 1.27531094904142) --
(-2.02443198734791, 2.21397270276817, 1.25152514579898) --
(-2.15945372948891, 2.08248879713588, 1.23438285834984) --
(-2.28595309432018, 1.94278625961993, 1.22403624285097) --
(-2.40343084664080, 1.79541643231187, 1.22057713659401);

\draw[thick]
(1.98072731249577, -0.276982514851208, 1.70957125961372) --
(1.98845017417795, -0.214629692288108, 1.69532481506196) --
(1.99421067560672, -0.152065056131275, 1.68175482561272) --
(1.99800313186263, -0.0893503501230789, 1.66889142003613) --
(1.99982380024924, -0.0265474661067098, 1.65676315830939) --
(1.99967088398666, 0.0362816170535921, 1.64539696820652) --
(1.99754453398477, 0.0990748946381949, 1.63481808551195) --
(1.99344684869431, 0.161770397263247, 1.62504999799081) --
(1.98738187203595, 0.224306252036978, 1.61611439324014) --
(1.97935558940941, 0.286620743620770, 1.60803111053709) --
(1.96937592178662, 0.348652375134749, 1.60081809679067) --
(1.95745271789466, 0.410339928847783, 1.59449136669522) --
(1.94359774449627, 0.471622526591983, 1.58906496717371) --
(1.92782467477746, 0.532439689842114, 1.58455094619008) --
(1.91014907485374, 0.592731399400594, 1.58095932599977) --
(1.89058838840824, 0.652438154629195, 1.57829808089779) --
(1.86916191947687, 0.711501032168991, 1.57657311951377) --
(1.84589081339764, 0.769861744090596, 1.57578827169335) --
(1.82079803594268, 0.827462695417317, 1.57594527999497) --
(1.79390835065385, 0.884247040964444, 1.57704379582094) --
(1.76524829440411, 0.940158741438580, 1.57908138019140) --
(1.73484615120880, 0.995142618741654, 1.58205350915951) --
(1.70273192431277, 1.04914441042505, 1.58595358385570) --
(1.66893730658075, 1.10211082324010, 1.59077294513882) --
(1.63349564922038, 1.15398958573207, 1.59650089282163) --
(1.59644192886855, 1.20472949982577, 1.60312470942783) --
(1.55781271307370, 1.25428049135190, 1.61062968842812) --
(1.51764612420805, 1.30259365946418, 1.61899916689229) --
(1.47598180184545, 1.34962132489860, 1.62821456248515) --
(1.43286086364186, 1.39531707702711, 1.63825541472393) --
(1.38832586475725, 1.43963581965927, 1.64909943040568) --
(1.34242075585874, 1.48253381554676, 1.66072253310376) --
(1.29519083974662, 1.52396872954679, 1.67309891662352) --
(1.24668272664595, 1.56389967040172, 1.68620110229852) --
(1.19694428820791, 1.60228723109387, 1.70000000000000) --
(1.14602461026625, 1.63909352773540, 1.71446497272430) --
(1.09397394439548, 1.67428223695522, 1.72956390461463) --
(1.04084365831867, 1.70781863174572, 1.74526327226631) --
(0.986686185213674, 1.73966961573412, 1.76152821915711) --
(0.931554971967940, 1.76980375584464, 1.77832263303747) --
(0.875504426432869, 1.79819131331915, 1.79560922610869);

\draw[thick]
(-0.224306252036978, 1.98738187203595, 1.85628567172972) --
(-0.286620743620769, 1.97935558940941, 1.83764116995614) --
(-0.348652375134750, 1.96937592178662, 1.81935714538732) --
(-0.410339928847780, 1.95745271789466, 1.80147419313019)--
(-0.471622526591983, 1.94359774449627, 1.78403201781087) --
(-0.532439689842114, 1.92782467477746, 1.76706934542042) --
(-0.592731399400594, 1.91014907485374, 1.75062383733353) --
(-0.652438154629195, 1.89058838840824, 1.73473200669076) --
(-0.711501032168989, 1.86916191947687, 1.71942913733031) --
(-0.769861744090597, 1.84589081339764, 1.70474920544905) --
(-0.827462695417316, 1.82079803594268, 1.69072480416684) --
(-0.884247040964446, 1.79390835065385, 1.67738707116171) --
(-0.940158741438579, 1.76524829440411, 1.66476561953637) --
(-0.995142618741654, 1.73484615120880, 1.65288847206974) --
(-1.04914441042505, 1.70273192431277, 1.64178199899941) --
(-1.10211082324011, 1.66893730658075, 1.63147085947305) --
(-1.15398958573207, 1.63349564922038, 1.62197794679897) --
(-1.20472949982577, 1.59644192886855, 1.61332433761726) --
(-1.25428049135190, 1.55781271307370, 1.60552924510437) --
(-1.30259365946418, 1.51764612420805, 1.59860997631504) --
(-1.34962132489861, 1.47598180184545, 1.59258189375633) --
(-1.39531707702711, 1.43286086364187, 1.58745838127899) --
(-1.43963581965927, 1.38832586475725, 1.58325081436199) --
(-1.48253381554676, 1.34242075585874, 1.57996853485611) --
(-1.52396872954679, 1.29519083974662, 1.57761883024266) --
(-1.56389967040172, 1.24668272664595, 1.57620691745349) --
(-1.60228723109387, 1.19694428820791, 1.57573593128807) --
(-1.63909352773540, 1.14602461026625, 1.57620691745349) --
(-1.67428223695522, 1.09397394439548, 1.57761883024266) --
(-1.70781863174572, 1.04084365831867, 1.57996853485611) --
(-1.73966961573412, 0.986686185213676, 1.58325081436199) --
(-1.76980375584464, 0.931554971967942, 1.58745838127899) --
(-1.79819131331915, 0.875504426432871, 1.59258189375633) --
(-1.82480427306568, 0.818589863729837, 1.59860997631504) --
(-1.84961637130598, 0.760867451660858, 1.60552924510437) --
(-1.87260312149470, 0.702394155277730, 1.61332433761726) --
(-1.89374183848461, 0.643227680664417, 1.62197794679897) --
(-1.91301166091416, 0.583426417988107, 1.63147085947305) --
(-1.93039357179509, 0.523049383875164, 1.64178199899941) --
(-1.94587041727990, 0.462156163168861, 1.65288847206974) --
(-1.95942692359063, 0.400806850126293, 1.66476561953637) --
(-1.97104971209226, 0.339061989112655, 1.67738707116171) --
(-1.98072731249577, 0.276982514851206, 1.69072480416684) --
(-1.98845017417795, 0.214629692288110, 1.70474920544905) --
(-1.99421067560672, 0.152065056131275, 1.71942913733031) --
(-1.99800313186263, 0.0893503501230791, 1.73473200669076) --
(-1.99982380024924, 0.0265474661067101, 1.75062383733353) --
(-1.99967088398666, -0.0362816170535919, 1.76706934542042) --
(-1.99754453398477, -0.0990748946381946, 1.78403201781087) --
(-1.99344684869431, -0.161770397263250, 1.80147419313019) --
(-1.98738187203595, -0.224306252036980, 1.81935714538732) --
(-1.97935558940941, -0.286620743620771, 1.83764116995614);

\draw [thick]
(-2.75659166407450, -1.18372395327414, 2.00000000000000)--(0,0,2);

\draw [thick]
(0.353160817433702, 2.97914038558601, 2.00000000000000)--(0,0,2);

\draw [thick]
(2.40343084664080, -1.79541643231187, 2.00000000)--(0,0,2);
\end{tikzpicture}
\end{center}
The figure on the left is the ``fold'' or ``cusp'' singularity.  The figure on the right is the critical front of the $\mathrm{D}_4^{-}$-bifurcation of fronts \cite[Ch. 22]{Arnold-singularities}, which is also of interest as the wavefront projection of a holomorphic Lagrangian (the graph of $\pm \sqrt{z} dz$).

Near a singular point, $\Phi$ breaks $M$ into chambers and lower-dimensional strata which can fit together in a complicated way.  Still it is sometimes possible to describe the structure of $F$ near a singular point concretely.  A description for the examples pictured above is fairly simple: at a cusp the two restriction maps must compose (in one direction) to the identity \cite[Ex. 5.3.4]{KS}, and at a critical $\mathrm{D}_4^{-}$ front we get \cite[\S 4.1.2]{TZ2} a triple of exact commutative squares glued at their source and sink:
\begin{equation}
\label{eq:fold-and}
\begin{tikzpicture}
\node at (3,0) {$p\circ i = 1_A;$};
\node at (0,0) {$\xymatrix{A \\ & B \ar[lu]_(.42){p} \\ A \ar[ru]_(.55){i}}$};
\draw [thick] (1.00000000000000,-1.57079632679490)--
(0.998026728428272,-1.57071370865007)--
(0.992114701314478,-1.57013694551431)--
(0.982287250728689,-1.56857968535837)--
(0.968583161128631,-1.56557103917673)--
(0.951056516295154,-1.56066300718853)--
(0.929776485888251,-1.55343754406168)--
(0.904827052466019,-1.54351315173756)--
(0.876306680043864,-1.53055089588322)--
(0.844327925502015,-1.51425975108832)--
(0.809016994374947,-1.49440119050971)--
(0.770513242775789,-1.47079294758219)--
(0.728968627421412,-1.44331189047119)--
(0.684547105928689,-1.41189596393529)--
(0.637423989748690,-1.37654516797450)--
(0.587785252292473,-1.33732155783022)--
(0.535826794978997,-1.29434826533703)--
(0.481753674101715,-1.24780755706014)--
(0.425779291565073,-1.19793795984268)--
(0.368124552684678,-1.14503049909501)--
(0.309016994374947,-1.08942410915006)--
(0.248689887164855,-1.03150028806647)--
(0.187381314585725,-0.971677081176312)--
(0.125333233564304,-0.910402488260458)--
(0.0627905195293135,-0.848147398324422)--
(0.000000000000000,-0.785398163397448)--
(-0.0627905195293134,-0.722648928470474)--
(-0.125333233564304,-0.660393838534439)--
(-0.187381314585725,-0.599119245618585)--
(-0.248689887164855,-0.539296038728428)--
(-0.309016994374947,-0.481372217644840)--
(-0.368124552684678,-0.425765827699888)--
(-0.425779291565073,-0.372858366952215)--
(-0.481753674101715,-0.322988769734761)--
(-0.535826794978997,-0.276448061457862)--
(-0.587785252292473,-0.233474768964681)--
(-0.637423989748690,-0.194251158820399)--
(-0.684547105928689,-0.158900362859605)--
(-0.728968627421411,-0.127484436323707)--
(-0.770513242775789,-0.100003379212705)--
(-0.809016994374947,-0.0763951362851909)--
(-0.844327925502015,-0.0565365757065764)--
(-0.876306680043863,-0.0402454309116796)--
(-0.904827052466019,-0.0272831750573382)--
(-0.929776485888251,-0.0173587827332153)--
(-0.951056516295154,-0.0101333196063714)--
(-0.968583161128631,-0.00522528761816290)--
(-0.982287250728689,-0.00221664143652411)--
(-0.992114701314478,-0.000659381280582030)--
(-0.998026728428272,-0.0000826181448217722)--
(-1.00000000000000,0.000000000000000)--
(-0.998026728428272,0.0000826181448218902)--
(-0.992114701314478,0.000659381280582148)--
(-0.982287250728689,0.00221664143652424)--
(-0.968583161128631,0.00522528761816282)--
(-0.951056516295154,0.0101333196063714)--
(-0.929776485888251,0.0173587827332153)--
(-0.904827052466020,0.0272831750573384)--
(-0.876306680043864,0.0402454309116798)--
(-0.844327925502015,0.0565365757065766)--
(-0.809016994374947,0.0763951362851909)--
(-0.770513242775789,0.100003379212705)--
(-0.728968627421412,0.127484436323707)--
(-0.684547105928689,0.158900362859605)--
(-0.637423989748690,0.194251158820399)--
(-0.587785252292473,0.233474768964681)--
(-0.535826794978996,0.276448061457862)--
(-0.481753674101715,0.322988769734761)--
(-0.425779291565072,0.372858366952216)--
(-0.368124552684679,0.425765827699888)--
(-0.309016994374947,0.481372217644840)--
(-0.248689887164855,0.539296038728427)--
(-0.187381314585725,0.599119245618585)--
(-0.125333233564305,0.660393838534439)--
(-0.0627905195293132,0.722648928470474)--
(0.000000000000000,0.785398163397448)--
(0.0627905195293128,0.848147398324422)--
(0.125333233564304,0.910402488260458)--
(0.187381314585724,0.971677081176312)--
(0.248689887164855,1.03150028806647)--
(0.309016994374947,1.08942410915006)--
(0.368124552684678,1.14503049909501)--
(0.425779291565072,1.19793795984268)--
(0.481753674101715,1.24780755706014)--
(0.535826794978996,1.29434826533703)--
(0.587785252292473,1.33732155783022)--
(0.637423989748689,1.37654516797450)--
(0.684547105928689,1.41189596393529)--
(0.728968627421411,1.44331189047119)--
(0.770513242775789,1.47079294758219)--
(0.809016994374947,1.49440119050971)--
(0.844327925502015,1.51425975108832)--
(0.876306680043864,1.53055089588322)--
(0.904827052466020,1.54351315173756)--
(0.929776485888251,1.55343754406168)--
(0.951056516295154,1.56066300718853)--
(0.968583161128631,1.56557103917673)--
(0.982287250728689,1.56857968535837)--
(0.992114701314478,1.57013694551431)--
(0.998026728428272,1.57071370865007);
\draw [dashed] (0,0) circle (1.86209588911859cm);
\node at (7,0) {$\xymatrix{
& C \\
B_1 \ar[ur] & B_2 \ar[u] & B_3 \ar[ul] \\
& A \ar[u] \ar[ul] \ar[ur]
}$};
\end{tikzpicture}
\end{equation}

\subsection{Microlocal monodromy and brane obstruction}
\label{intro:mumon}
Let $F$ be a sheaf with singular support in $\Lambda$, where $\Lambda$ and $\Phi$ are as in \S\ref{intro:ssafp}.  Let us introduce a notation: we write $\Lambda^{\mathit{sm}} \subset \Lambda^{\mathit{imm}} \subset \Lambda$ for the set where $\Lambda \to M$ is an embedding resp. an immersion.  At each point $P \in \Lambda^\mathit{sm}$, we define $\mu_P(F)$ to be the cone on the map \eqref{eq:smooth-part}.  We refer to $\mu_P(F)$ as a ``microlocal stalk'' of $F$ at $P$.  The spectra $\mu_P$ are the stalks of a local system on $\Lambda^{\mathit{imm}}$: they vary in a locally constant fashion over the smooth part of $\Phi$, and the exactness condition of \S\ref{intro:damp} allows for a definition of $\mu_P(F)$ at the preimage of a multiple point of $\Phi$ as well.  For example, on one of the two strands of the diagram \eqref{eq:damp}, we have the microlocal stalks
\begin{center}
\begin{tikzpicture}
\draw [ultra thick,  domain=-100:100, samples=40] 
 plot ({\x/38}, {1.5*sin(\x)} );
 \draw [thick,  domain=-100:100, samples=40] 
 plot ({\x/38}, {-1.5*sin(\x)} );

\draw (-1.5,1.5)--(-1.5,-1);
\draw (1.5,1.5)--(1.5,2.5);
\node at (-1.5,1.9) {$\mathrm{Cone}\big(F(R_{11}) \to F(R_{12})\big)$};
\node at (1.5,2.9) {$\mathrm{Cone}\big(F(R_{21}) \to F(R_{22})\big)$};
\end{tikzpicture}
\end{center}
which are canonically identified by the exactness of the square \eqref{eq:damp}.  The cones on the other pair of parallel arrows are identified in a similar way.

Now we discuss the nature of microlocal monodromy near and \emph{through} a singular point of a front.  For example, if $P$ and $Q$ are on opposite sides of a fold, and $F$ is the sheaf pictured in \eqref{eq:fold-and} then $\mu_P = \mathrm{Cone}(i)$ and $\mu_Q = \mathrm{Cone}(p)$ are related but not isomorphic: the homotopy between $p \circ i$ and $1_A$ gives an isomorphism between $\mathrm{Cone}(i)$ and $\Sigma^{-1} \mathrm{Cone}(p)$.  As in a local system we think of this isomorphism as being induced by a path from $P$ to $Q$ passing through the fold, but it means that a closed loop that passes through several folds does not exactly induce an automorphism of the microlocal stalk at the base point.  Instead, it induces a map
\begin{equation}
\label{eq:bI-twist}
\mu_P \cong \bI \otimes_{\bk} \mu_P
\end{equation}
where $\bI$ is an invertible $\bk$-module spectrum depending only on the based loop (but not on the sheaf $F$).

The assignment from based loops to invertible $\bk$-modules is similar to the data of a $\GL_1(\bk)$-valued cocycle $\eta$ on the fundamental group of $\Lambda$, and the data \eqref{eq:bI-twist} is similar to the data of a module over the $\eta$-twisted group algebra.  This resemblance is made precise in the theory of algebra structures on Thom spectra \cite{ABGHR}, by keeping track of higher homotopies.

The critical $\mathrm{D}_4^{-}$ front, pictured on the right in \S\ref{intro:spoaf}, is one place such higher homotopies intervene.  There, we have $\Lambda \cong \bR^2$ and $\Lambda^{\mathit{imm}} \cong \bR^2 -  \{(0,0)\}$.  A closed loop around the singular point (with an appropriate base point) induces the composite
\begin{equation}
\label{eq:ABiC}
B_1/A \stackrel\sim\to C/B_2 \stackrel\sim\leftarrow B_3/A \stackrel\sim\to C/B_1 \stackrel\sim\leftarrow B_2/A \stackrel\sim\to C/B_3 \stackrel\sim\leftarrow B_1/A
\end{equation}
where the spectra are as in \eqref{eq:fold-and}, and we write $X/Y$ as a shorthand for $\mathrm{Cone}(Y \to X)$.  The map \eqref{eq:ABiC} is always homotopic to $-1$. 

(Let us prove this. After taking the cone of the natural map from the constant sheaf with stalk $A$ to the given sheaf, we may assume $A = 0$. Then the data are the maps $B_i \to C$, $i = 1,2,3$, that induce isomorphisms $C \cong B_i \oplus B_j$ when $i \neq j$. Again without loss of generality, set $C = B_1 \oplus B_2$. Then $B_3 \to B_1 \oplus B_2$ is isomorphic to the inclusion of the graph of an isomorphism $f:B_1 \cong B_2$, and the sequence of maps in \eqref{eq:ABiC} is 
\[
B_1 \xrightarrow{\mathrm{id}} B_1 \xleftarrow{\mathrm{id}} B_1 \xrightarrow{f} B_2 \xleftarrow{\mathrm{id}} B_2 \xrightarrow{i_2} (B_1 \oplus B_2)/\Gamma_f \xleftarrow{i_1}B_1
\]
The first three maps compose to $f:B_1 \to B_2$ and the last three maps compose to $-f^{-1}:B_2 \to B_1$.)


Another formulation of the ``cocycle'' $\eta$ is that it is the data of a locally constant sheaf of $\infty$-categories over $\Lambda$, the sheaf of Brane structures \S\ref{intro:coefficients}.  Its global sections are in this sense $\eta$-twisted locals system --- the $\infty$-category of them is equivalent to the $\infty$-category of the twisted group algebra.  Then microlocal monodromy is a functor $\Sh_{\Lambda}(M)$ to $\Gamma(\Lambda,\Brane_{\Lambda})$, that we denote by $\mumon$.  In this paper, we will not try to make this path-theoretic description of $\mumon$ precise.  Instead, we construct $\Brane_{\Lambda}$ by \v Cech theory, and prove that it is locally constant using Kashiwara and Schapira's theory of contact transformations.

\subsection{The Nadler-Zaslow functor}
\label{intro:NZ}
An exact Lagrangian $L$ in $T^* M$ determines a conic Lagrangian in $T^*(M \times \bR)$, or equivalently a Legendrian in $(T^* M) \times \bR$, which we denote by $\bL$ (cf. (\ref{eq:cone-bL})). Here we are thinking of $(T^* M) \times \bR$ as living in the cosphere bundle at infinity of $T^*(M\times\bR)$, so then the Legendrian boundary of $\bL$ is canonically identified with $L$. The category of sheaves with singular support in $\bL$ can be described as in \S\ref{intro:ssafp}, with $\Lambda$ replaced by $L$ and $\Phi$ replaced by the wavefront projection of $L$ (which we sometimes denote by $\bF$).  Then $L$ carries a locally constant sheaf of categories $\Brane_L$, and \S\ref{intro:mumon} supplies a functor $\Sh_{\bL}(M \times \bR) \to \Gamma(L,\Brane_L)$.  Guillermou has proved (in \cite{Guillermou}, with some superficial differences in language) that when $L$ is compact and $L \to T^* M$ is an embedding, this functor restricts to an equivalence on the full subcategory $\Sh_{\bL}^0(M \times \bR) \subset \Sh_{\bL}(M \times \bR)$ spanned by sheaves that vanish on $M \times (C,\infty)$ for $C \gg 0$.  

We adapt this proof to the case where $L$ is noncompact (and ``lower exact'' \S\ref{subsec:exact-lags}), and to treat sheaves of spectra.  Then our version of \eqref{eq:one} is given by the composite
\begin{equation}
\label{eq:two}
\Gamma(L,\Brane_L) \xleftarrow{\mumon} \Sh_{\bL}^0(M \times \bR) \xrightarrow{\proj_{1,*}} \Sh_{\Lambda}(M)
\end{equation}

\subsection{The $J$-homomorphism}
There is a universal locally constant sheaf of categories with fiber $\Mod(\bk)$, living above a space denoted $B\Pic(\bk)$.  When $\bk = \bS$ is the sphere spectrum $B\Pic(\bS)$ is an infinite loop space with 
\[
\pi_1(B\Pic(\bS)) = \bZ \qquad \pi_2(B\Pic(\bS)) = \{\pm 1\} \qquad \pi_{i}(B\Pic(\bS)) = \pi_{i-2}(\bS) \quad \text{for $i \geq 3$}
\]
Like any locally constant sheaf, the sheaf of brane structures is the pullback of this universal one along a classifying map $L \to B\Pic(\bS)$.  In subsequent papers \cite{J-BO, J-Jhom}, by studying a bundle over $L$ whose fiber is a stabilization of the space of contact transformations, the first named author shows that this map factors as
\begin{equation}
\label{intro:factor}
L \to \LagGr(\infty) \to B\Pic(\bS)
\end{equation}
where the first map is the stable Gauss map\footnote{e.g. \cite[\S 2]{AbouzaidKragh}, though in our case there is an involution of $\LagGr(\infty)$ that one needs to take.} and the second map (whose domain is $\mathrm{U}/\mathrm{O}$, one of the Bott spaces whose based loop space is naturally homotopy equivalent to $\bZ \times B \mathrm{O}$) is the delooping of the real $J$-homomorphism $\bZ \times B\mathrm{O} \to \Pic(\bS)$.

Since $\bS$ is initial among ring spectra, there is a canonical map $B\Pic(\bS) \to B\Pic(R)$ for any commutative ring spectrum $R$, and we define an $R$-linear brane structure to be a trivialization of the composite map $L \to B\Pic(R)$.  For some special values of $R$, the theory of genera and orientations \cite{ABGHR2} shows that the composite $\LagGr(\infty) \to B\Pic(\bS) \to B\Pic(R)$ factors in a canonical fashion through a quotient of the domain that has only finitely many homotopy groups.  In these cases the vanishing of the Brane obstruction is implied by the vanishing of finitely many characteristic classes (see \cite[\S\!\S  7-8]{ABG}, or \cite{ABS} for an older perspective for items (2) and (3)):
\begin{enumerate}
\item For example, if $R$ is discrete, then $J:\bZ \times B\mathrm{O} \to \Pic(R)$ factors through $\bZ \times B(\mathrm{O}/\mathrm{SO}) \cong \bZ \times B(\bZ/2)$.  Thus the Brane obstruction can be trivialized by giving a null-homotopy of a map $L \to B\bZ \times B^2(\bZ/2)$ ---  this is the usual problem of gradings and relative pin structures.

\item If $R = \mathbf{KU}$, then $J:\bZ \times B\mathrm{O} \to \Pic(R)$ factors through a loop space we can denote by $\bZ/2 \ltimes B(\mathrm{O}/\mathrm{Spin}^c)$, whose homotopy groups are $\bZ/2, \bZ/2, 0,$ and $\bZ$ in degrees 0--3 and vanish otherwise.  (There is actually a splitting $\bZ/2 \ltimes B(\mathrm{O}/\mathrm{Spin}^c) \cong \bZ/2 \times B(\mathrm{O}/\mathrm{Spin}^c)$ preserving the loop space structure, but it is not canonical.)

\item If $R = \mathbf{KO}$, then $J:\bZ \times B\mathrm{O} \to \Pic(R)$ factors through a loop space we can denote by $\bZ/8 \ltimes B(\mathrm{O}/\mathrm{Spin})$, whose homotopy groups are $\bZ/8,\bZ/2,$ and $\bZ/2$ in degrees 0--2 and vanish otherwise.
\item If $R = \mathbf{tmf}$, $\bZ \times B\mathrm{O} \to \Pic(R)$ factors through $\bZ \times B(\mathrm{O}/\mathrm{String})$, whose homotopy groups are $\bZ, \bZ/2, \bZ/2,0,$ and $\bZ$ in degrees 0--4 and vanish otherwise.
\end{enumerate}
In the nondiscrete cases to identify $\Gamma(L,\Brane_L)$ with a category of untwisted local systems one might require less of the Maslov class (e.g. for $\mathbf{KU}$ it need only be even) but there are new classes that must be trivialized --- in $H^4(L,\bZ)$, for $\mathbf{KU}$, in $H^3(L,\mathbf{Z}/2)$ for $\mathbf{KO}$, and in $H^3(L,\bZ/2)$ and $H^5(L,\bZ)$ for $\mathbf{tmf}$.

\subsection{More general microlocal categories}

We briefly indicate some work in progress in this section.
Lurie's preprint \cite{Lurie-circle} on circle actions and algebraic $K$-theory has a brief discussion explaining the topological obstructions to defining an $\bS$-linear Fukaya category of a general symplectic manifold.  In particular, he suggests that an $\bS$-linear stable $\infty$-category can be associated to any symplectic $2n$-manifold for which the composite
\begin{equation}
\label{eq:lurie-fukaya}
M \to B\mathrm{U}(n) \to B\mathrm{U} \xrightarrow{\mathrm{Bott}} B^2(\bZ \times B\mathrm{U}) \xrightarrow{J_{\bC}} B^2 \Pic(\bS)
\end{equation}
is null-homotopic.  The compatibility between the real and complex $J$-homomorphisms, and between the real and complex Bott periodicities, shows that when \eqref{eq:lurie-fukaya} is null-homotopic, each Lagrangian in $M$ carries a locally constant sheaf of categories with fiber $\Mod(\bS)$, which we may as well go on denoting $\Brane_L$.  When $M$ is a Weinstein manifold, the data \eqref{eq:lurie-fukaya} determines a sheaf of categories over a skeleton of $M$, which is locally isomorphic to a category of sheaves --- following Tamarkin, we may call it the ``microlocal category'' of $M$.  The techniques of this paper can be extended to define a functor from $\Gamma(L,\Brane_L)$ to this microlocal category. After the first version of our work appeared in 2017, Nadler-Shende \cite{Nadler-Shende} defined such a sheaf of categories on any Weinstein manifold in 2020, given the null-homotopy of an obstruction $M\rightarrow B^2\Pic(\bS)$. Using the result of \cite{J-Jhom}, one can identify the obstruction with (\ref{eq:lurie-fukaya}).

\subsection{Precedents}

Microlocal sheaf theory was created by Kashiwara and Schapira \cite{KS}, and \eqref{eq:two} is directly inspired by the work of Nadler and Zaslow \cite{NZ}.  Our main result generalizes \cite{Guillermou} on compact Lagrangians and our proof uses some of the same techniques.  The most basic of these techniques, probing the symplectic geometry of $T^* M$ by studying sheaves on $M \times \bR$, was pioneered by Tamarkin \cite{Tamarkin1} and has many other recent applications.  Here we indicate some influences and precedents that are less direct.

The algebraic-topological work required to assign gradings to Floer chain groups was first explained by Seidel \cite{Seidel-graded}.  The appearance of spectra in this paper is part of a tradition, maybe starting with Cohen-Jones-Segal \cite{CJS}, of pursuing a ``Floer homotopy type'' underlying Floer homology.  The role in this story of locally constant sheaves of categories, whose fiber is the category of spectra, is anticipated in \cite{Doug}, and appear explicitly as Thom spectra in the Floer-theoretic works \cite{Kr,Kr2} and the non-Floer-theoretic work \cite{AbouzaidKragh}.  Lurie's preprint \cite[\S 1.3]{Lurie-circle} defines a topological invariant of an almost symplectic manifold, which he expects is the topological obstruction to defining a Fukaya category over the sphere spectrum --- the fact that it vanishes for a cotangent bundle has inspired us.

\subsection{Questions}

\subsubsection{Singular Lagrangians}

Suppose $L$ is a singular Lagrangian (for simplicity, in $\bR^{2n}$) whose Lagrangian singularity type is locally constant along a smooth locus $L_0 \subset L$.  Then one has a locally constant sheaf of categories along $L_0$ --- the Kashiwara-Schapira sheaf $\MSh_L\vert_{L_0}$.  Does this sheaf of categories have any familiar description in algebraic topology, along the lines of the $J$-homomorphism in the case of a smooth $L$?

\subsubsection{Immersed Lagrangians}
If $L \to T^* M$ is an exact Lagrangian immersion, we can still make sense of the diagram \eqref{eq:two}, but the left-hand map is no longer an equivalence.  It does have a right adjoint, and one could in this way compose $\proj_{1,*}$ with this adjoint to obtain a functor from twisted local systems on $L$ to $\Sh(M)$.  More intriguingly, one could regard \eqref{eq:two} as a correspondence or multiple-valued functor --- in the case where $L$ and $M$ are circles, this correspondence is studied in \cite[\S 6]{STZ}, it is shown to be closely related to the HOMFLY homology of the Legendrian lift of $L \subset (T^* S^1) \times \bR \subset S^3$.  

Neither recipe matches the standard Floer-theoretic treatments of immersed Lagrangians in any obvious way.  What's going on?

\subsubsection{Holomorphic Lagrangians}
Holomorphic Lagrangians are never lower exact, so one cannot apply \eqref{eq:two} to them directly.
Still, let us indicate an interesting feature of the $J$-homomorphism in the holomorphic symplectic setting.  Suppose (as in \cite{J-Perverse}) that $L$ is a holomorphic Lagrangian in the cotangent bundle of a complex manifold $M$.  Then the Gauss map $L \to \mathrm{U}/\mathrm{O}$ factors through $\mathrm{Sp}/\mathrm{U}$.  
After \cite{J-Perverse} it is tempting to call a sheaf of spectra on $M$ that arises from a holomorphic Lagrangian ``perverse,'' although such ``perverse sheaves of spectra'' cannot live in the heart of any $t$-structure on $\Sh(M,\bS)$.

It is interesting to speculate if there is a deeper implication.  The target of the holomorphic Gauss map is a further delooping of $\mathrm{U}/\mathrm{O}$, and we can compose it with a further delooping of the $J$-homomorphism:
\[
L_{\bC} \to \mathrm{Sp}/\mathrm{U} \to B^2 \Pic(\bS)
\]
This suggests that $L_{\bC}$ carries a sheaf of $2$-categories --- is there anything to that?  Can our recipe be adapted to produce some kind of $2$-categorical sheaf-like object on $M$, e.g. a perverse schober?  The ``categorified nature'' of holomorphic symplectic geometry compared to real symplectic geometry is a well-known phenomenon in quantum field theory, see especially \cite{KapustinRozansky}. Since the completion of the first version of this paper in 2017, there has been some developments in this line, e.g. see \cite{DoanRezchikov} which uses holomorphic Floer theory. 

Note this line of speculation can be continued up Bott's tower --- e.g. as the quadruple delooping of $\bZ \times B \mathrm{O}$ is the stable quaternionic Grassmannian, each almost quaternionic manifold carries a locally constant sheaf of $4$-categories.

\subsubsection{Lagrangian cobordisms}
\newcommand{\Lag}{\mathrm{Lag}}
Nadler and Tanaka \cite{NadlerTanaka} consider $\infty$-categories of exact Lagrangians and cobordisms between them.  These $\infty$-categories are stable (for nontrivial reasons), and in fact are linear over a symmetric monoidal ``coefficient'' category called $\Lag_{\mathit{pt}}(\mathit{pt})$.  \cite{NadlerTanaka} leaves open the problem of identifying this coefficient category.
It is defined as a colimit of unstable categories
\[
\Lag_{\mathit{pt}}(\mathit{pt}) := \varinjlim_n \Lag_{\bR^n}^{\diamond 0}(T^* \bR^n)
\]
whose objects (resp. morphisms) are exact Lagrangian submanifolds of $T^* \bR^n$ (resp. $T^* \bR^n \times T^* \bR$).  The morphisms are subject to a condition in the $T^* \bR$-factor, called ``noncharacteristic'' or ``$\bR^n$-avoiding'', that makes them irreversible.

As part of the definition, all of these Lagrangians (objects, cobordisms, and higher morphisms) are equipped with a trivialization of the composite of the Gauss map with $\LagGr \to S^1 \times B^2(\bZ/2) \cong B\Pic(\bZ)$ --- but not with a local system.  Still, we recognize the map to be trivialized as $\Brane_L$ with $\bZ$-coefficients, thus (if we stick to lower exact Lagrangians) our results associate
\begin{enumerate}
\item functors $\Loc(L;\bZ) \to \Sh(\bR^n;\bZ)$ to objects of $\Lag_{\bR^n}^{\diamond 0}(T^* \bR^n)$
\item functors $\Loc(C;\bZ) \to \Sh(\bR^n \times \bR;\bZ)$ to morphisms of $\Lag_{\bR^n}^{\diamond 0}(T^* \bR^n)$.  (The noncharacteristicness condition forces the image of this functor to lie in a certain localization of $\Sh(\bR^n \times \bR)$.)
\end{enumerate}
and so on for higher morphisms.  What is the right way to organize this structure?

\subsection{Acknowledgments}
We have benefited from answers, advice, and conversations on this topic from Mohammed Abouzaid, Elden Elmento, Marc Hoyois,  Jacob Lurie, Akhil Mathew, Haynes Miller, David Nadler, Dmitri Pavlov, Dima Tamarkin, and Dylan Wilson. We thank Wenyuan Li and Marco Volpe for useful comments on a previous version of the paper. We thank Peter Haine for teaching us many aspects about $\infty$-topoi. We are grateful to the anonymous referee for many useful comments and suggestions. 
XJ was supported by NSF-DMS-1854232. DT was supported by NSF-DMS-1510444 and a Sloan Research Fellowship.

\section{Sheaves of spectra}
\label{sec:som}

In this section we review the microlocal theory of sheaves on manifolds, noting what requires care in an $\infty$-categorical setting.  Even when working over a discrete ring $\bk$, we depart somewhat from \cite{KS}, in that we do not impose boundedness conditions.  In $\infty$-categorical jargon all of our $\infty$-categories of sheaves are ``presentable.''

It is an old observation of Neeman that working systematically with unbounded categories can simplify certain arguments in triangulated categories, but for many years it was not possible to take advantage of this observation in the kind of sheaf theory we discuss here --- most strikingly, we believe that in 1990 when \cite{KS} was being written it was an open problem whether the proper base-change theorem holds for unbounded complexes, even for a pair of maps between finite-dimensional manifolds.  This was settled by Lurie  in the affirmative in \cite[\S 7.3.1]{higher-topoi} (announced earlier in \cite{on-infty-topoi}).

Our interest is in Lagrangian submanifolds in cotangent bundles in $T^* M$, which are related to constructible sheaves on $M$ and  (as we begin to discuss in \S\ref{sec:wavefront-notation}) $M \times \bR$.  In \cite{KS} it is shown that systematic study of non-constructible sheaves can simplify the study of constructible sheaves, in particular Kashiwara and Schapira do not require the use of stratified Morse theory.  But there are crucial tools in the approach that are difficult to import to the presentable setting.  The problem is that specialization to the normal cone, and functors derived from it, such as $\mu\mathrm{hom}$, commute with neither infinite colimits nor infinite limits.  So in our microlocal analysis of sheaves of spectra, we restrict to sheaves that are constructible on a Whitney stratification and use stratified Morse theory.

Nevertheless, the microlocal theory of nonconstructible sheaves may be important in symplectic geometry, as it is the basis of Tamarkin's microlocal category of a compact symplectic manifold \cite{Tamarkin2}.  It will be necessary to develop a theory for sheaves of spectra.  We take some first steps in \S\ref{subsec:MTONCS}--\S\ref{subsec:TSOMS}; there is also recent work of Robalo and Schapira along these lines \cite{RS}.

\subsection{Coefficients and stable $\infty$-categories} 
\label{subsec:2.1}
 We will use $\Sigma$ for the suspension functor in a stable $\infty$-category.
If $\bk$ is an associative algebra spectrum, we write $\LMod(\bk)$ for its $\infty$-category of left module spectra --- it is a compactly generated presentable stable $\infty$-category (see \cite[\S 7.2.5]{higher-algebra}). When $\bk$ is a discrete ring, $\LMod(\bk)$ is an $\infty$-categorical enrichment of the usual unbounded derived category of $\bk$-modules.

If $\bk$ has the structure of an $\mathrm{E}_2$-algebra, then $\LMod(\bk)$ has a monoidal structure $(- \otimes_{\bk} -)$ that preserves colimits in both variables (see \cite[\S 7.1.2]{higher-algebra}).  In this case we will shorten our notation for this monoidal stable $\infty$-category to $\Mod(\bk)$.

We write $\St$ for the symmetric monoidal $\infty$-category of presentable stable $\infty$-categories, and continuous (colimit-preserving) functors.  We write $\St_{\bk}$ for the $\infty$-category of left $\Mod(\bk)$-module objects in $\St$.  A morphism in $\St_{\bk}$ is a ``$\bk$-linear continuous functor.'' We write $\bS$ for the sphere spectrum, which has canonically $\St_{\bS} \cong \St$.

We write $B\Pic(\bk)^{\not\simeq}$ for the full subcategory of $\St_{\bk}$ of objects that are equivalent to $\Mod(\bk)$, and $B\Pic(\bk) \subset B\Pic(\bk)^{\not\simeq}$ for the subcategory obtained by discarding non-invertible morphisms in $B\Pic(\bk)^{\not\simeq}$.  Thus $B\Pic(\bk)$ is a connected $\infty$-groupoid, with a distinguished object $\Mod(\bk)$. We will usually regard $B\Pic(\bk)$ as a pointed space, i.e. abuse notation and not distinguish between the $\infty$-groupoid and its nerve. The meaning of the notation is that there is a canonical homotopy equivalence between the space of based loops in $B\Pic(\bk)$ (i.e. the space of self-equivalences of $\Mod(\bk) \in \St_{\bk}$) and the space $\Pic(\bk)$ of $\otimes_{\bk}$-invertible objects of $\Mod(\bk)$. (The canonical homotopy equivalence is $F \mapsto F(\bk)$; this is a very special case of the Morita theory of \cite[\S 4.8]{higher-algebra})

\subsection{Sheaves and sheaf operations}
We write $\Shall(X,\bk) \in \St_{\bk}$ for the $\infty$-category of sheaves of $\bk$-module spectra on a finite-dimensional locally compact Hausdorff space $X$.  Here by finite-dimensional, we mean the space $X$ is paracompact and has finite covering dimension (cf. \cite[Definition 7.2.3.1]{higher-topoi}).  Formally, if $\Shv(X)$ denotes the $\infty$-topos associated to $X$ \cite[\S 6.5.4]{higher-topoi}, then $\Shall(X,\bk)$ is the $\infty$-category of contravariant functors $\Shv(X)^{\op} \to \Mod(\bk)$ that convert small colimits into small limits.  (In \cite[Notation 6.3.5.16]{higher-topoi}, this is $\Shv_{\Mod(\bk)}(X)$.) 

\subsubsection{Covers}
\label{subsec:covers}
One may also obtain $\Shall(X,\bk)$ as a localization \S\ref{subsec:localization} of the category of $\Mod(\bk)$-valued presheaves on $X$.  As we are assuming $X$ is finite-dimensional and locally compact, it is the full subcategory of presheaves $P$ that obey any of the following equivalent conditions:
\begin{enumerate}
\item Let $U$ be an open subset of $X$ and let $\{U_i\}_{i \in I}$ be an open cover of $U$.  Let $\cU$ denote the poset of open subsets of $U$ that are contained in at least one of the $U_i$.  Then the natural map
\[
P(U) \to \varprojlim_{V \in \cU} P(V)
\]
assembled from the restriction maps $P(U) \to P(V)$ is an isomorphism\footnote{Throughout the paper, we follow the convention, as in \cite{higher-topoi}, that by an isomorphism, we mean an invertible morphism in an $(\infty, 1)$-category, i.e. a morphism admitting a two-sided inverse up to homotopy.}.
\item Let $U$ be an open subset of $X$, and let $\{U_i\}_{i \in I}$ be an open cover of $U$ that is closed under finite intersections --- that is, suppose that for each $i,j$, there is a $k$ such that $U_i \cap U_j = U_k$.  (We will follow \cite[Def 4.5]{DanDan} and call such a covering a ``\v Cech cover.'')  Regarding $I$ as a poset and $i \mapsto U_i \mapsto P(U_i)$ as a functor on (the opposite of) this poset, the natural map
\[
P(U) \to  \varprojlim_{i \in I} P(U_i)
\]
assembled from the restriction maps $P(U) \to P(U_i)$ is an isomorphism.
\item Let $U$ be an open subset of $X$, and let $\{U_i\}_{i \in I}$ be an open cover of $U$ that has the following hypercovering property: every finite intersection $U_{i_1} \cap \cdots \cap U_{i_n}$ can be covered by open subsets from $\{U_i\}_{i \in I}$.  (We will follow \cite[Def. 4.5]{DanDan} and call a covering of $U$ with this property a ``complete cover.'').  Then regarding $I$ as a poset, the natural map
\[
P(U) \to \varprojlim_{i \in I} P(U_i)
\]
is an isomorphism.
\end{enumerate}

\begin{remark*}
Here (1)$\Leftrightarrow$(2) holds for arbitrary topological spaces, since for any open cover $\{U_i\}_{i\in I}$ of $U$, the associated \v Cech cover as a subposet in $\cU$ from (1) is cofinal. The equivalence (1)$\Leftrightarrow$(3) (for sheaves of spaces) is equivalent to the condition that $\Shv(X)$ is hypercomplete (cf. \cite[Theorem 6.5.3.13, \S 6.5.4]{higher-topoi}), which is saying that every sheaf in $\Shv(X)$ satisfies the stronger condition of hyperdescent than usual descent. This does \emph{not} hold for every locally compact Hausdorff space $X$. For a thorough discussion about the differences between descent and hyperdescent, we refer the reader to  \cite[\S 6.5.4]{higher-topoi}, especially Counterexample 6.5.4.8 in \emph{loc. cit.}. However, for a finite-dimensional locally compact Hausdorff space $X$, it is a paracompact Hausdorff space of finite covering dimension, hence by \cite[Theorem 7.2.3.6, Corollary 7.2.1.12]{higher-topoi}, $\Shv(X)$ is hypercomplete. Then $\Shall(X,\bk)\simeq \Shv(X)\otimes\Mod(\bk)$ \cite[\S 1.3.1]{SAG} satisfies hyperdescent as well. 
\end{remark*}

\subsubsection{Locally constant sheaves, operations} 
\label{subsec:lcso}
One defines the constant sheaf with fiber $M \in \Mod(\bk)$ to be the sheafification of the presheaf that takes the constant value $M$, and call a sheaf locally constant if it is isomorphic to such a constant sheaf in some open cover of $M$.  We write $\Loc(X,\bk) \subset \Shall(X,\bk)$ for the full subcategory of locally constant sheaves on $X$.  

When $X$ is locally contractible, a necessary and sufficient condition for $F$ to be locally constant is for each there to exist a complete covering (in the sense of \S\ref{subsec:covers}(3))  by contractible open subsets $U_i \subset X$, such that $F(U_i) \to F(U_j)$ is an isomorphism whenever $U_j \subset U_i$.  In fact if $\{U_i\}_{i \in I}$ is such a covering then $\Loc(X,\bk)$ is equivalent to the full subcategory of $\Fun(I^{\op},\Mod(\bk))$ spanned by functors that carry every arrow in $I^{\op}$ to an equivalence.  (Let us call functors with this property ``locally constant functors.'')

If $f:Y \to X$ is a continuous map, the pullback functor $f^*:\Shall(X) \to \Shall(Y)$ is a continuous functor, with a right adjoint $f_*$ that is not always continuous.  If $f = j$ is an open embedding then $j^*$ (which in the case of an open embedding we also denote by $j^!$) has a left adjoint $j_!$, the extension-by-zero functor.  As it is a left adjoint, it is automatically continuous.

If $f$ is a proper continuous map between locally compact spaces, then $f_*$ (which in the case of a proper map we also denote by $f_!$) is continuous.  For a general map between locally compact spaces, we define $f_! = \overline{f}_* \circ j_!$, where $j,\overline{f}$ is a factorization into an open inclusion $j$ and a proper map $\overline{f}$.

We also have the standard sheaf operations for an inclusion of a subset $\iota: Z\hookrightarrow X$, referred to as restrictions and sections with support respectively:
\begin{align*}
F_Z:=\iota_!\iota^*F,\ \Gamma_ZF:=\iota_*\iota^!F. 
\end{align*}

\subsubsection{Deligne gluing} 
\label{subsec:deligne-gluing}
Let $\LCH$ be the $1$-category of finite-dimensional locally compact Hausdorff spaces and continuous maps.  
It is straightforward to verify that there is a functor $\LCH^\op \to \St_{\bk}$, carrying $X$ to $\Shall(X,\bk)$ and $f$ to $f^*$.  It is more difficult to verify that there is a functor $\LCH \to \St_{\bk}$ carrying $X$ to $\Shall(X;\bk)$ and $f$ to $f_!$ --- this is again straightforward for either the subcategory of $\LCH$ whose morphisms are proper maps, or that whose morphisms are open inclusions, but to glue these requires an $\infty$-categorical update to the machine in \cite{SGA4-Del} --- this is carried out in great generality in \cite[Cor. 0.2]{Yifeng1}, \cite[\S 2.2]{Yifeng2}. See also \cite{Volpe}.

\subsubsection{Cosheaf perspective}
\label{subsec:cosheaf-perspective}
If $X$ is a locally compact Hausdorff space, then for each open $U \subset X$, the functor $F \mapsto \Gamma_c(U,F)$ is continuous, as it is the composite of restriction to $U$ and proper pushforward to a point.  In contrast, the functor $F \mapsto \Gamma(U,F)$ is not usually continuous (for example if $U$ contains a closed, infinite discrete set $Z$, then the constant sheaf on $Z$ is a direct sum, while its image under $\Gamma(U,-)$ is a direct product).  Note that $\Gamma_c(U,F)$ is covariant in the $U$-variable --- it is a cosheaf in the sense that, whenever $\{U_i\}_{i \in I}$ is a family of open subsets that is closed under finite intersections and that covers $U$, the natural map
\[
\varinjlim_{i \in I} \Gamma_c(U_i;F) \to \Gamma_c(U;F)
\]
is an isomorphism.

In \cite{Tamarkin2}, Tamarkin uses such cosheaves systematically --- but he calls them sheaves.  Indeed on a locally compact Hausdorff space, the $\infty$-categories of sheaves of $\bk$-modules and of cosheaves of $\bk$-modules are equivalent, via the assignment $F \mapsto \Gamma_c(-;F)$.  This is one formulation of Verdier duality \cite[\S5.5.5]{higher-algebra}.

\subsubsection{Descent}
\label{subsec:descent}
When regarded as a $\St_{\bk}$-valued contravariant functor on $\LCH$, the $\infty$-categories $\Shall(-;\bk)$ themselves form a sheaf, so that whenever $\{U_i\}_{i \in I}$ is a  covering sieve of $X$, the restriction maps $\Shall(X;\bk) \to \Shall(U_i;\bk)$ assemble to an equivalence
\begin{equation}\label{eq: Sh descent}
\Shall(X;\bk) \stackrel{\sim}{\to} \varprojlim_{i \in I} \Shall(U_i;\bk)
\end{equation}
where the limit is taken in the $\infty$-category $\St_{\bk}$.  The same is true with $\Shall(-,\bk)$ replaced by $\Loc(-,\bk)$, or by the categories $\Sh_{\cS}(-;\bk)$ and $\Sh_{\Lambda}(-;\bk)$ discussed in \S\ref{subsec:constructible-sheaves}. A proof of (\ref{eq: Sh descent}) can be found in Appendix \ref{appendix B, descent}.

\subsection{Generators}
\label{subsec:generators}
In classic sheaf theory texts, many identities between sheaf operations are verified by taking suitable resolutions (injective, flabby, soft,\ldots).  The theory of presentable categories gives an alternative which is more general (as it applies to unbounded complexes, or to sheaves of spectra), and in some ways simpler.

The sheaves of the form $j_{U,!} \bk$, where $j_U:U \hookrightarrow X$ runs though all open subsets of $X$, make a small set of generators for $\Shall(X,\bk)$ --- indeed, $j_{U,!} \bk$ represents the sections functor $F \mapsto \Gamma(U;F)$.  We may use this to verify sheaf operation identities by the following device: if $\phi_1,\phi_2$ are continuous functors $\Shall(X) \to \cC$ in $\St_{\bk}$, and $n:\phi_1 \to \phi_2$ is a natural transformation between them (i.e. $n$ is a morphism in $\Fun(\Shall(X),\cC)$), then $n$ is an isomorphism if and only if $n_U:\phi_1(j_{U,!} \bk) \to \phi_2(j_{U,!}\bk)$ is an isomorphism for all $U$.  

(We warn once again, however, that the objects $j_{U,!} \bk$ are not usually compact --- they are $\aleph_1$-compact in the sense of \cite[\S 5.3.4]{higher-topoi})

\subsection{Proper base change}
We record a consequence of the nonabelian proper base change theorem of \cite[\S 7.3]{higher-topoi}. Given a Cartesian diagram of locally compact topological spaces,
\[
\xymatrix{
X' \ar[r]^{q'}  \ar[d]_{p'} & X \ar[d]^p\\
Y' \ar[r]_q & Y
}
\]
the natural map $q^* p_! \to p'_! q'^*$ is an isomorphism of functors $\Shall(X) \to \Shall(Y')$.

There is an expectation, which has been seen through in other contexts \cite{Nick}, that ``six operations'' formalisms can be encoded in $(\infty,2)$-categorical language --- as a functor from the $(2,2)$-category of spaces, correspondences, and 2-morphisms between correspondences, to the $(\infty,2)$-category of categories.  We don't use it in this paper.

\subsection{Monoidal structure and functors from kernels}
\label{subsec:kernel}
The monoidal structure on $\Mod(\bk)$ induces a monoidal structure on $\Shall(X,\bk)$, which we also denote by $(-\otimes_{\bk} -)$.  Formally, one applies \cite[Lemma 2.2.1.9]{higher-algebra} to the monoidal category of presheaves on $X$, after noting that if $P \to P'$ induces an equivalence after sheafification, then so does $P \otimes_{\bk} Q \to P' \otimes_{\bk} Q$.  The pullback functors $f^*$ are monoidal.

For each $K \in \Shall(X \times Y;\bk)$, we define a continuous functor $K \circ: \Shall(Y) \to \Shall(X)$, by the formula
\[
G \mapsto \proj_{1,!}\left(K \otimes_{\bk} \proj_2^* G\right)
\]
(In the notation of \cite[Def. 3.6.1]{KS}, this is $\Phi_K$).  We also define $\circ K:\Shall(Y) \to \Shall(X)$ (denoted $F \mapsto F \circ K$) by
$
G \mapsto \proj_{1,!}\left(\proj_2^* G \otimes_{\bk} K \right)
$.
Unless $\bk$ is $\mathrm{E}_3$-commutative or better, we do not usually have a natural isomorphism $K \circ F \cong F \circ K$.  

\subsubsection{Projection formula}
\label{subsubsec:projection-formula}
The ``projection formula'' is a pair of natural isomorphisms
\begin{equation}
\label{eq:projection-formula}
f_!G \otimes_{\bk} F \cong f_!(G \otimes_{\bk} f^* F)  \qquad F \otimes_{\bk} f_! G \cong f_!( f^* F \otimes_{\bk} G)
\end{equation}
Kashiwara and Schapira prove this as \cite[Prop. 2.6.6]{KS}. For sheaves of spectra, these isomorphisms can be obtained as base-change maps, constructed from the Cartesian diagrams
\[
\xymatrix{
X \ar[r]^f \ar[d]_{\Gamma_f} & Y \ar[d]^{\Delta_Y} \\
X\times Y \ar[r]_{f \times \mathrm{id}_Y} & Y \times Y
}
\qquad \xymatrix{
X \ar[r]^f \ar[d]_{\Gamma_f^T} & Y \ar[d]^{\Delta_Y} \\
Y\times X \ar[r]_{\mathrm{id}_Y \times f} & Y \times Y
}
\]
applied to the sheaf $F \boxtimes G$ on $X \times Y$ or $G \boxtimes F$ on $Y \times X$. Here $\Gamma_f = (\mathrm{id}_X \times f) \circ \Delta_X$ and $\Gamma_f^T = (f \times \mathrm{id}_X) \circ \Delta_X$.

From \eqref{eq:projection-formula} and the adjunction between $f_!$ and $f^!$, one can construct (exactly as in \cite[Prop. 3.1.11]{KS}) natural transformations
\[
f^! F \otimes_{\bk} f^* G \to f^!(F \otimes_{\bk} G) \qquad f^* F \otimes f^! G \to f^!(F \otimes_{\bk} G)
\]
If $f:Y \to X$ is a fiber bundle whose fiber is a topological manifold, or more generally a topological submersion in the sense of \cite[Def. 3.3.1]{KS}, these become isomorphisms.  In particular, putting $\omega_{Y/X} := f^!(\bk)$, we have canonically 
\begin{equation}
\label{eq:submersion}
f^! F \cong f^* F \otimes_{\bk} \omega_{Y/X} \cong \omega_{Y/X} \otimes_{\bk} f^* F
\end{equation}
when $f$ is a topological submersion --- the proof is the same as \cite[Prop. 3.3.2(ii)]{KS}.  Moreover $\omega_{Y/X}$ is invertible, it is locally isomorphic to a suspension of the constant sheaf on $Y$.  Note that \eqref{eq:submersion} implies that, for a topological submersion, $f^!$ is a continuous functor.  

The equation \eqref{eq:submersion} also holds when $f$ is a fiber bundle whose fiber is a manifold with boundary.  In that case the restriction of $f$ to the complement of the boundary (write it as $Y^{\circ})$ is a topological submersion, and $\omega_{Y/X}$ is the extension by zero of $\omega_{Y^{\circ}/X}$ along the inclusion $Y^{\circ} \to Y$.   

\subsubsection{The kernel $K^{-1}$}
If $\Delta_X:X \to X \times X$ denotes the diagonal embedding, then $(\Delta_{X,*} \bk) \circ$ and $\circ (\Delta_{X,*}\bk)$ are both isomorphic to the identity functor on $\Shall(X)$.  Given a kernel $K \in \Shall(X \times Y)$, there is another kernel $K^{-1} \in \Shall(Y \times X)$ along with a natural map $K^{-1} \circ K \to \Delta_{X,*} \bk$ --- it is given by the formula \cite[Eq. 1.21]{GKS}
\begin{equation}
\label{eq:K-inverse}
K^{-1} = \mathrm{flip}(\shHom(K,\omega_{X\times Y/Y}))
\end{equation}
where $\shHom(K,-)$ is the right adjoint functor to $K \otimes_{\bk} (-)$, $\omega_{X \times Y/Y} \cong \omega_X \boxtimes \bk_Y$ as in \S\ref{subsubsec:projection-formula}, and $\mathrm{flip}$ is the obvious equivalence between $\Shall(X \times Y)$ and $\Shall(Y \times X)$.

\subsection{Localization}
\label{subsec:localization}

If $C$ is a presentable $\infty$-category, the following data are equivalent to each other \cite[Prop. 5.2.7.4]{higher-topoi}:
\begin{enumerate}
\item Another presentable $\infty$-category $LC$ together with a continuous functor $C \to LC$ whose right adjoint is fully faithful.
\item A not necessarily continuous functor $L:C \to C$, together with a natural transformation $\eta_L:1_C \to L$ that becomes an isomorphism after applying $L$ --- i.e. that has $L(\eta_L):L \to L^2$ an isomorphism.
\end{enumerate}
When $C$ is stable, these data are furthermore equivalent to
\begin{enumerate}
\item[(3)] A \emph{localizing subcategory} of $C$, i.e. a full subcategory $C' \subset C$ that is closed under infinite direct sums, and that is also presentable.
\end{enumerate}
(We warn that some authors do not require that a ``localizing subcategory'' is presentable, though there is a strong set-theoretic axiom (Vopenka's principle) which implies that presentability of $C'$ is automatic.)

(1) determines (2) by taking $L:C \to C$ to be the composite of $C \to LC$ with its right adjoint.  (2) determines (1) by taking $LC$ to be the essential image of $L$.  In the stable setting (1) and (2) determine (3) by taking $C'$ to be the kernel of $L$.

If $C' \subset C$ is a localizing subcategory of a presentable stable $\infty$-category, we sometimes write $C/C'$ or $L_{C'} C$ for the right orthogonal of $C'$ in $C$, i.e. for full subcategory of $C$ spanned by objects $c$ with $\Hom(c',c) = 0$ for all $c' \in C'$.  This is how (3) determines (1) --- the fully faithful inclusion $C/C' \to C$ has a left adjoint $C \to C/C'$.  

A $\bk$-linear structure on $C$ that preserves $C'$ induces a $\bk$-linear structure on $C/C'$, and the construction $(C' \subset C) \mapsto C/C'$ is functorial.  In fact it extends to a functor 
\begin{equation}
\label{eq:cone-of-categories}
\Fun(\Delta^1,\St_{\bk}) \to \St_{\bk}
\end{equation}
taking the arrow $C \to C'$ to the colimit of the diagram $0 \leftarrow C' \to C$.

A basic example is the case of restriction to an open subset.  That is, if $C$ is the $\infty$-category of sheaves on a space and $LC$ is the $\infty$-category of sheaves on an open subset, then the restriction functor $j^*:C \to LC$ has a fully faithful right adjoint $j_*$, and $L = j_* j^*$.  

\subsection{Microlocal theory of non-constructible sheaves}

\label{subsec:MTONCS}

In a moment, we will restrict our attention to sheaves that are locally constant on the strata of some Whitney stratification, and study them with the help of stratified Morse theory.  But Kashiwara and Schapira give tools for analyzing more general sheaves microlocally.  We discuss some of these tools in this section from the standpoint of sheaves of spectra.  This material is ``optional'' (it is not used elsewhere in this paper) so we will be somewhat terse.

\subsubsection{Noncharacteristic deformation lemma}
\label{subsec:noncharacteristic-deformation-lemma}

This is the name given to \cite[Prop. 2.7.2]{KS}.  The ``unbounded'' analog is the following:

\begin{prop*}
Let $X$ be a Hausdorff space and $F \in \Shall(X,\bk)$ a sheaf on $X$.  Suppose that $\{U_t\}_{t \in \bR}$ is a family of open subsets of $X$ obeying the following:
\begin{enumerate}
\item $U_t = \bigcup_{s < t} U_s$ for all $t \in \bR$
\item Whenever $t \geq s$, the set $\overline{U_t - U_s} \cap \mathrm{supp}(F)$ is compact
\item Setting $Z_s = \bigcap_{t > s} \overline{U_t - U_s}$\footnote{In the source \cite[Prop. 2.7.2]{KS}, there is a typo in the placement of the overline to indicate the closure.}, whenever $s \leq t$ and $x \in Z_s - U_t$:
\begin{equation}
\label{eq:not-continuous-uh-oh}
(\Gamma_{X - U_t} F)_x = 0 
\end{equation}
Then for all $t \in \bR$, the natural map is an isomorphism:
\[
 \Gamma\left(\bigcup_{s\in \bR} U_s;F\right) \to \Gamma(U_t;F)
\]
\end{enumerate}
\end{prop*}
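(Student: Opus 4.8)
The plan is to follow the proof of \cite[Prop.~2.7.2]{KS}, checking that it survives the removal of boundedness; the one point to watch is exactly the one flagged in this section's preamble, namely that the non-continuous functor $\Gamma_Y$ (notation of \S\ref{subsubsec:sing-supp-27}) occurs below only for a \emph{fixed} closed set $Y$ and only inside a limit, so its failure to commute with colimits never intervenes.

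\emph{Two reductions.} Write $U=\bigcup_s U_s$, and for $s\le t$ let $r_{s,t}\colon\Gamma(U_t;F)\to\Gamma(U_s;F)$ be restriction; also write $\Gamma_Z(V;F):=\Gamma\bigl(V;\Gamma_Z(F|_V)\bigr)$ for $Z$ closed in an open set $V$. Since $F$ is a sheaf (\S\ref{subsec:covers}) and $U$ is the increasing union of the $U_n$ ($n\in\bZ$, cofinal among the $U_t$), one has $\Gamma(U;F)\xrightarrow{\sim}\varprojlim_n\Gamma(U_n;F)\simeq\varprojlim_{t\in\bR}\Gamma(U_t;F)$ with structure maps the $r_{s,t}$; if every $r_{s,t}$ is an isomorphism this cofiltered limit is identified, by its canonical projection --- which is the map in the statement --- with each $\Gamma(U_t;F)$, so it suffices to prove all $r_{s,t}$ are isomorphisms. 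Next, $U_s$ is open in $U_t$ with closed complement $U_t\setminus U_s$, so applying $\Gamma(U_t;-)$ to the recollement fiber sequence $\Gamma_{U_t\setminus U_s}(F|_{U_t})\to F|_{U_t}\to (U_s\hookrightarrow U_t)_*(F|_{U_s})$ identifies $\Gamma_{U_t\setminus U_s}(U_t;F)$ with the fiber of $r_{s,t}$; so it is enough to show $\Gamma_{U_t\setminus U_s}(U_t;F)=0$ for all $s\le t$.

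\emph{An open--closed argument.} Fix $s_0$ and put $A=\{t\ge s_0:r_{s_0,t}\text{ is an isomorphism}\}\ni s_0$. The set $A$ is closed under suprema of increasing sequences: if $t_m\uparrow t$ with $t_m\in A$, hypothesis~(1) makes $\{U_{t_m}\}_m$ a nested open cover of $U_t$, so $\Gamma(U_t;F)\simeq\varprojlim_m\Gamma(U_{t_m};F)$; as $r_{s_0,t_{m+1}}=r_{s_0,t_m}\circ r_{t_m,t_{m+1}}$ with the outer maps isomorphisms, each $r_{t_m,t_{m+1}}$ is an isomorphism, the tower is essentially constant, $r_{t_m,t}$ is an isomorphism, and hence so is $r_{s_0,t}$, i.e.\ $t\in A$. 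Granting the Local Lemma below, $A$ is also open to the right: if $t_0\in A$ then $r_{t_0,t}$ is an isomorphism for $t\in[t_0,t_1]$ for some $t_1>t_0$, so $r_{s_0,t}=r_{s_0,t_0}\circ r_{t_0,t}$ is too and $[t_0,t_1]\subseteq A$. A subset of $[s_0,\infty)$ that contains $s_0$, is closed under increasing suprema, and is open to the right equals $[s_0,\infty)$ (otherwise $t^*=\sup\{t:[s_0,t]\subseteq A\}$ would lie in $A$ by the first two properties and then be pushed past by the third). Letting $s_0$ vary gives $\Gamma_{U_t\setminus U_s}(U_t;F)=0$ for all $s\le t$, as required.

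It remains to prove the \emph{Local Lemma}: for every $\tau$ there is $t_1>\tau$ with $\Gamma_{U_t\setminus U_\tau}(U_t;F)=0$ (equivalently, $r_{\tau,t}$ is an isomorphism) for all $t\in[\tau,t_1]$; this is the one place hypotheses~(2) and~(3) are used. Here I would follow \cite[proof of Prop.~2.7.2]{KS}: by~(2) the sets $\overline{U_t\setminus U_\tau}\cap\mathrm{supp}(F)$ are compact and decrease, as $t\downarrow\tau$, to $Z_\tau\cap\mathrm{supp}(F)$; covering a suitable such compactum by finitely many small open sets, one uses hypothesis~(3) --- the noncharacteristic condition, which says precisely that enlarging $U_t$ past a point of a sweep locus $Z_s$ does not change stalks of $F$ --- to see that the resulting elementary enlargements of $U_\tau$ leave $\Gamma(-;F)$ unchanged, and so obtains $\Gamma(U_t;F)\cong\Gamma(U_\tau;F)$ for $t$ close to $\tau$ after finitely many Mayer--Vietoris steps. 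I expect this local, essentially point-set-topological step to be the main obstacle; but it is unaffected by the passage to sheaves of spectra, its only sheaf-theoretic inputs being recollement fiber sequences and finite descent (\S\ref{subsec:covers}), each applied to a fixed closed or open set inside a finite limit --- so the non-continuity of $\Gamma_Y$ and of $\Gamma(V,-)$, the sole genuine hazard of this section, is never triggered.
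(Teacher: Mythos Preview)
Your open--closed argument has a genuine gap at the ``Local Lemma.'' You assert that for each $\tau$ there is a $t_1>\tau$ with $r_{\tau,t}$ an isomorphism for all $t\in[\tau,t_1]$, and you say you would prove it ``following \cite[proof of Prop.~2.7.2]{KS}.'' But that is not what \cite{KS} proves. Kashiwara--Schapira verify, from hypotheses~(2) and~(3), only the \emph{colimit} condition
\[
\varinjlim_{t>\tau}\Gamma(U_t;F)\;\xrightarrow{\ \sim\ }\;\Gamma(U_\tau;F),
\]
together with the limit condition coming from~(1); they then feed these two conditions into the Kashiwara lemma. Your Local Lemma is strictly stronger: knowing that a filtered colimit $\varinjlim_{t>\tau}P(t)\to P(\tau)$ is an isomorphism does \emph{not} produce a single $t_1>\tau$ with $P(t)\to P(\tau)$ an isomorphism for every $t\in(\tau,t_1]$. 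The finite Mayer--Vietoris/compactness picture you sketch cannot close this gap: hypothesis~(3) controls stalks of $\Gamma_{X\setminus U_t}F$ only along the sweep loci $Z_s$, and a point $x\in U_{t_1}\setminus U_\tau$ lies on $Z_{s(x)}$ with $s(x)\in[\tau,t_1)$, which does not match the index needed to kill the stalk of $\Gamma_{X\setminus U_\tau}F$ at $x$ directly.

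This is precisely the place where the passage to unbounded complexes or spectra bites, contrary to your final sentence. The paper's proof is: reduce (as in \cite{KS}) to the two hypotheses of the Kashiwara lemma on the presheaf $P(t)=\Gamma(U_t;F)$, and then prove the Kashiwara lemma in $\Mod(\bk)$ by a new argument exploiting that each $\Sigma^k\bk$ is compact --- any map $\Sigma^k\bk\to C(s_2)$ into the cone factors through some $C(t)$ with $t>s_2$, and one pushes the supremum of such $t$ up to $s_1$ using both the limit and colimit conditions. That compactness trick is the missing idea; your open--closed scheme is just a repackaging that still needs it (or something equivalent) at the step you labeled ``the main obstacle.''
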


\begin{proof}
As in \cite[Prop 2.7.2]{KS}, the proof is an application of the following:
\begin{quote}
{\bf Kashiwara lemma:} Let $P:\bR^{\op} \to \Mod(\bk)$ be a presheaf on the poset $\bR$.  Suppose that for each $s \in \bR$, the maps
\[
\varinjlim_{t > s} P(t) \to P(s) \qquad P(s) \to \varprojlim_{t < s} P(t)
\]
are both isomorphisms.  Then $P(s_1) \to P(s_2)$ is an isomorphism for every $s_1 \geq s_2$.
\end{quote}
When $\bk = \bZ$ and under some boundedness hypotheses, a version of this first appeared in the proof of \cite[Th. 1.2]{Kash}.  We learned the following proof (which does not require such hypotheses) from Dmitri Pavlov --- perhaps it is the same proof that Kashiwara ommited for \cite[Lem. 1.3]{Kash}.  For each $s_2 \leq t \leq s_1$, let $C(t)$ denote the cone on $P(s_1) \to P(t)$.  Fix a map $f: \Sigma^k \bk \to C(s_2)$.  We will show that $f$ is nullhomotopic, by showing that $f$ factors through $C(s_1) = 0$.  As $C(s_2) = \varinjlim_{t > s_2} C(t)$ and $\Sigma^k \bk$ is compact in $\Mod(\bk)$, the map $f$ factors through $C(t)$ for some $t > s_2$.  Let $r$ be the supremum of all $t \leq s_1$ for which such a factorization can be found.  As $C(r) = \varprojlim_{t < r} C(t)$, it follows that a factorization through $C(r)$ can be found.  But then we must have $r = s_1$, for if this is false then (using $C(r) = \varinjlim_{t > r} C(t)$) we may find $t > r$ such that $f$ factors through $C(t)$, violating the definition of $r$.\end{proof}

\subsubsection{Singular support}
\label{subsubsec:sing-supp-27}
The singular support $\SS(F) \subset T^* X$ of a sheaf $F \in \Shall(X,\bk)$ is defined by defining its complement.  We say that $(x,\xi) \notin \SS(F)$ if the following conditions holds for some neighborhood $U \ni (x,\xi)$:
\begin{quote}
If $\psi$ is a real-valued $C^1$-function, defined in a neighborhood of $x_1$ and with $d\psi_{x_1} \in U$, then 
\begin{equation}
\label{eq:def-of-sing-supp}
(\Gamma_{\{x \mid \psi(x) \geq \psi(x_1)\}}(F))_{x_1} = 0. 
\end{equation}
\end{quote}
The stalk of $\Gamma_{\{x \mid \psi(x) \geq \psi(x_1)\}}(F)$ at $x_1$ depends only on the germ of $\psi$ near $x_1$.  

This definition makes clear that $\SS(F)$ is a closed conic subset of $T^* X$, and that it is functorial for pullback by $C^1$-homeomorphisms.  Other desirable properties are not immediate, and need a second look in the setting of unbounded complexes or spectra. In particular, since $\Gamma_Y$ is not a continuous functor, it is not immediate that if one is given infinitely many sheaves with $\SS(F_i) \subset Z$, for a closed conic subset $Z$, then one also has $\SS(\bigoplus F_i) \subset Z$.

In the following, we include another convenient formulation of singular support borrowing Tamarkin's notion of an ``$\Omega$-lense" to address the latter issue. With the noncharacteristic deformation lemma in the ``unbounded" situation at hand, one can prove several equivalent definitions of (the complement of) singular support as in \cite[Proposition 5.1.1]{KS} without any essential change. In fact, the notion of an ``$\Omega$-lense" is somewhat implicit in \emph{loc. cit.}.

Given a conic open subset $\Omega\subset T^*X$, we say a quadruple $(f, \chi, a, \epsilon_0)\in C^\infty(X)\times  C^\infty(X)\times \bR\times \bR_+$ (one can replace $C^\infty(X)$ by $C^r(X)$ for any $r\geq 1$) is an \emph{$\Omega$-lense} if the followings hold:
\begin{itemize}
\item the function $\chi$ with range in $[0,1]$ but not constantly $0$ (as a characteristic function) is compactly supported on $f^{-1}([a-\epsilon_0,a+\epsilon_0])$;
\item Define $H_{a,\epsilon}:=\{x: f(x)\leq a+\epsilon\chi(x)\}$ and $H^\dagger_{a,\epsilon}:=\{x: f(x)<a+\epsilon\chi(x)\}$. Then for any $\epsilon\in [0,\epsilon_0]$ and $x\in (H_{a,\epsilon}-H^\dagger_{a,\epsilon})\cap \overline{\{\chi>0\}}$, we have $\big(x,(df-\epsilon d\chi)_x\big)\in \Omega$. 
\end{itemize}
For a sheaf $F\in \Shall(X,\bk)$, we say that $F$ is \emph{$\Omega$-noncharacteristic} if for any $\Omega$-lense $(f,\chi, a,\epsilon_0)$, we have an isomorphism 
\begin{align}\label{eq: c,a,b open}
\Gamma(H^\dagger_{a,\epsilon_0};F)\overset{\sim}{\longrightarrow} \Gamma(H^\dagger_{a,0};F)
\end{align}
for the restriction map. We can equally characterize the $\Omega$-noncharacteristic property of $F$ by replacing (\ref{eq: c,a,b open}) with 
\begin{align}\label{eq: c,a,b closed}
\Gamma(H_{a,\epsilon_0}\cap \overline{\{\chi>0, f\geq a-\epsilon_0\}};F)\overset{\sim}{\longrightarrow} \Gamma(H_{a,0}\cap \overline{\{\chi>0, f\geq a-\epsilon_0\}};F). 
\end{align}
Since $H_{a,\epsilon}\cap \overline{\{\chi>0, f\geq a-\epsilon_0\}}$ is compact for any $\epsilon\in [0,\epsilon_0]$, both sides of (\ref{eq: c,a,b closed}) preserve (small) colimits in $F$. 

One can show that for any two conic open subsets $\Omega_1$ and $\Omega_2$, $F$ is $\Omega_1\cup \Omega_2$-noncharacteristic if and only if $F$ is both $\Omega_1$ and $\Omega_2$-noncharacteristic (the argument is a standard application of the noncharacteristic deformation lemma). Then $\SS(F)$ is defined to be the complement of the largest $\Omega$ so that $F$ is $\Omega$-noncharacteristic. Now it is clear that $\SS(\bigoplus\limits_i F_i)\subset\overline{\bigcup\limits_i \SS(F_i)}$. It is also clear from (\ref{eq: c,a,b open}) that $\SS(\prod\limits_i F_i)\subset\overline{\bigcup\limits_i \SS(F_i)}$.

\subsubsection{A ``dual" view of $\Omega$-lenses}\label{subsubsec: dual view Omega-lenses}
For each $\Omega$-lense $(f, \chi, a, \epsilon_0)$, let $C^\circ_{a,-\epsilon}=X-H_{a,\epsilon}$ (resp. $C_{a,-\epsilon}=X-H_{a,\epsilon}^\dagger$), then we have the open inclusion $C^\circ_{a,-\epsilon_0}\hookrightarrow C^\circ_{a,0}$. Let $\pi_{\pt}: C^\circ_{a,-\epsilon}\to \pt$ and $j_{C,-\epsilon}:  C^\circ_{a,-\epsilon}\hookrightarrow X$ be the open inclusions. 
For any $M\in \Mod(\bk)$, by adjunction 
\begin{align*}
\Hom_{\Shall(X)}(F, (j_{C,-\epsilon})_*\pi_\pt^!M)\cong \Hom_{\Mod(\bk)}(\Gamma_c(C^\circ_{a,-\epsilon}, F|_{C^\circ_{a,-\epsilon}}), M)
\end{align*}
Using the equivalent charaterization of $\Omega$-noncharacteristic by (\ref{eq: c,a,b closed}), we see that $F$ is $\Omega$-noncharacteristic if and only if for all $\Omega$-lenses $(f, \chi, a, \epsilon_0)$ and $M\in \Mod(\bk)$, the natural morphism 
\begin{align*}
\Hom_{\Shall(X)}(F, \Gamma_{C^\circ_{a,0}}\omega_X\otimes_\bk M)\to \Hom_{\Shall(X)}(F,  \Gamma_{C^\circ_{a,-\epsilon_0}}\omega_X\otimes_\bk M)
\end{align*}
is an isomorphism.

\subsubsection{Standard (dualizing) sheaves associated to $\Omega$-lenses}\label{subsubsec: std, Omega-lenses}
For any $\Omega$-lense $(f, \chi, a, \epsilon_0)$ and any $0\leq \delta_0<\delta_1\leq \epsilon_0$, we associate the sheaf 
\begin{align*}
F_{\delta_0, \delta_1}:=\Cone(\bk_{H_{a,\delta_0}^\dagger}\to \bk_{H_{a,\delta_1}^\dagger})=\bk_{H^\dagger_{a,\delta_1}-H^\dagger_{a,\delta_0}}. 
\end{align*}
  In the following, we call such a sheaf the \emph{standard sheaf} associated to the $\Omega$-lense $(f, \chi, a, \epsilon_0)$ and $\delta_0<\delta_1$. If $\delta_0=0$ and $\delta_1=\epsilon_0$, we simply say the sheaf is associated to the $\Omega$-lense $(f, \chi, a, \epsilon_0)$. 

Let
\begin{align*}
\check{F}_{\delta_0,\delta_1}:=\fiber(\Gamma_{C^\circ_{a,-\delta_0}}\omega_X\to \Gamma_{C^\circ_{a,-\delta_1}}\omega_X)\cong \Gamma_{C^\circ_{a,-\delta_0}-C^\circ_{a,-\delta_1}}\omega_X,
\end{align*}
For any $M\in \Mod(\bk)$, we call $\check{F}_{\delta,\delta_1}\otimes_\bk M$ a \emph{standard dualizing sheaf} associated to the $\Omega$-lense $(f, \chi, a, \epsilon_0)$, $0\leq \delta_0<\delta_1\leq \epsilon_0$, and the module $M$. 
We have the following lemma which is crucial for the enhancement of Kashiwara-Schapira's theory to the spectral setting, especially for proving the key statements in \S\ref{subsec:contact-transformations}. 

\begin{lemma*}
For any open conic subset $\Omega$ in $T^*X$. The standard sheaves (resp. standard dualizing sheaves) associated to all $\Omega$-lenses (resp. $\Omega$-lenses and $\bk$-modules) generate the left (resp. right) orthogonal complement of $\Sh_{T^*X-\Omega}(X;\bk)$ under colimits (resp. limits). 
\end{lemma*}
\begin{proof}
The generation under colimits for the left orthogonal complement immediately follows from the definition. The generation under limits for the right orthogonal complement follows from the ``dual" version in \S\ref{subsubsec: dual view Omega-lenses}. 
\end{proof}

\begin{remark*}
By GKS, for any compactly supported contact Hamiltonian isotopy $\varphi_t, t\in [0,1], \varphi_0=id$ on $T^\infty X$, whose support is contained in $\Omega^\infty$, the canonical kernel $K_t$ associated to $\varphi_t$ (\ref{subsec:GKS}) preserves both the left and right orthogonal complements of $\Sh_{T^*X-\Omega}(X)$ and induces an auto-equivalence on them. 
\end{remark*}

 \subsubsection{Regular subanalytic $\Omega$-lenses and their microlocal sheaf theory}\label{subsubsec: regular Omega-lenses}
 
 Now we assume that $\cU\subset T^{*, \circ}X$ is open conic.  Using the proof of \cite[Proposition 5.1.1]{KS}, it suffices to consider some special class of subanalytic $\cU$-lenses to generate the left/right orthogonal complement of $\Sh_{T^*X-\cU}(X)$. For example, given $\cU$ in $\text{(1)}_\omega$ of \emph{loc. cit.}, we can restrict to the class of $\widetilde{\Omega}_t(a)$ constructed explicitly in its proof about $\text{(1)}_\omega\implies\text{(3)}$, and for each pair $t_1<t_2$ sufficiently close, we can find a $C^1$ $\cU$-lense $(f,\chi, c,\epsilon_0)$ with $H_{c, 0}^\dagger=\widetilde{\Omega}_{t_1}(a)$ and $H_{c,\epsilon_0}^\dagger=\widetilde{\Omega}_{t_2}(a)$. Clearly we can choose the $\cU$-lense in the class $C^r$, by making $\partial\widetilde{\Omega}_t(a)$ of class $C^r$. These $\cU$-lenses are all regular in the sense of the following definition. 
 
\begin{defn*}
We say a subanalytic $\cU$-lense $(f,\chi, c,\epsilon_0)$ is \emph{regular}, if  there is a Whitney stratification of $X$ with 5 strata: 
\begin{align*}
\{C_+:=\partial H_{c,\epsilon_0}^\dagger-\partial H_{c,0}^\dagger, C_-:=\partial H_{c,0}^\dagger-\partial H_{c,\epsilon_0}^\dagger, \partial C_+, H_{c,\epsilon_0}^\dagger-\overline{H_{c,0}^\dagger}, X-\overline{H_{c,\epsilon_0}^\dagger-\overline{H_{c,0}^\dagger}}\},
\end{align*}
that is a compatible with $H_{c,\epsilon_0}^\dagger-\overline{H_{c,0}^\dagger}$ (cf. \cite[\S A.2.4 (2), (i)]{J-Perverse}), such that $H_{c,\epsilon_0}^\dagger-\overline{H_{c,0}^\dagger}$ is contractible and each of $\overline{C}_+, \overline{C}_+$ is diffeomorphic to a closed ball (of $\dim X-1$). 
\end{defn*} 

For any regular subanalytic $\cU$-lense $\cL:=(f,\chi, c,\epsilon_0)$ in the class $C^r$, let $\Lambda_\cL\subset T^{*,\infty}X$ be the union of the positive conormal direction (i.e. $d(f-\epsilon_0\chi)$ and $df$ respectively) along $\overline{C}_+$ and $\overline{C}_-$. It's clear that $\Lambda_\cL$ is a $C^{r-1}$ Legendrian unknot. Note that in this case $\check{F}_{\delta_0, \delta_1}=F_{\delta_0,\delta_1}\otimes_\bk N$ for some $N\in \Mod(\bk)$. 
Note also that the sheaf theory with singular support in $\Lambda_\cL$ is easy to understand from stratified Morse theory. Take any generic $(x,\xi)$ in $\Cone(\Lambda_\cL)$ (where $\mu_{(x,\xi)}$ is well defined) and $v$ a stratified Morse function as in Subsection \ref{subsubsec: microstalks}, then $\mu_{(x,\xi)}$ induces an equivalence 
\begin{align}\label{eq: Lambda_cL, Modk}
\Sh_{\Cone(\Lambda_\cL)\cup\zeta_X}(X;\bk)/\Loc(X;\bk)\overset{\sim}{\longrightarrow} \Mod(\bk). 
\end{align}
and the inverse functor is sending $\bk$ (up to a possible suspension) to the standard sheaf associated with $\cL$. Here the left and right orthogonal complements of $\Loc(X;\bk)$ are identical.

\subsubsection{The microlocal Morse lemma}\label{cor: MML}

\begin{corollary*}[Corollary 5.4.19 \cite{KS}]
Let $F\in \Shall(X;\bk)$ and let $f: X\to \bR$ be a $C^1$-function. Assume that $f|_{\Supp(F)}: \Supp(F)\to \bR$ is proper. 
\begin{itemize}
\item[(i)] For any $-\infty<a<b<\infty$, if $\{(x,df_x): x\in f^{-1}[a,b)\}\cap \SS(F)=\varnothing$, then the restriction morphisms give isomorphisms: 
\begin{align}\label{eq: i, microlocal Morse lemma}
\Gamma(f^{-1}(-\infty, b); F)\overset{\sim}{\to} \Gamma(f^{-1}(-\infty, a]; F)\overset{\sim}{\to} \Gamma(f^{-1}(-\infty, a); F). 
\end{align}

\item[(ii)] For any $-\infty<a<b<\infty$, if $\{(x,-df_x): x\in f^{-1}(a,b]\}\cap \SS(F)=\varnothing$ (resp. $\{(x,-df_x): x\in f^{-1}[a,b)\}\cap \SS(F)=\varnothing$), then the restriction (resp. co-restriction) morphisms give isomorphisms: 
\begin{align}
\label{eq: ii, microlocal Morse lemma}&\Gamma(f^{-1}(a,\infty); F)\overset{\sim}{\to}\Gamma(f^{-1}[b, \infty); F) \overset{\sim}{\to}\Gamma(f^{-1}(b, \infty); F)\\
\label{eq: ii, microlocal Morse lemma, Gamma_c}&(\text{resp. }\Gamma_c(f^{-1}(-\infty,a); F)\overset{\sim}{\to} \Gamma_c(f^{-1}(-\infty, b); F)). 
\end{align}
 \end{itemize}

\end{corollary*}

\begin{proof}
(i) Since $\Supp(F)_{[a,b]}:=f^{-1}([a,b])\cap \Supp(F)$ is compact, there is a compactly supported $C^1$-function $\chi: X\to [0,1]$ such that $\chi|_{\Supp(F)_{[a,b]}}=1$. Let $\Omega=T^*X-\SS(F)$. Then by assumption, $(f, \chi, a, b-a)$ is an $\Omega$-lense. Since $\Supp(F)\cap f^{-1}(-\infty,a')\subset \{x: f(x)<a+(a'-a)\chi(x)\}$, for all $a'\in [a,b]$, and $F$ is $\Omega$-noncharacteristic, the restriction morphisms are both isomorphisms: 
\begin{align*}
\Gamma(f^{-1}(-\infty,a');F)\overset{\sim}{\to} \Gamma(\{x: f(x)<a+(a'-a)\chi(x)\};F)\overset{\sim}{\to} \Gamma(f^{-1}(-\infty,a);F), 
\end{align*} 
for all $a'\in [a, b]$. Since $\Gamma(f^{-1}(-\infty,a];F)\simeq \varinjlim_{a'>a}\Gamma(f^{-1}(-\infty,a');F)$, the isomorphisms in (\ref{eq: i, microlocal Morse lemma}) follow.  

(ii) The isomorphism (\ref{eq: ii, microlocal Morse lemma}) follows from applying (i) to the function $-f$. To see (\ref{eq: ii, microlocal Morse lemma, Gamma_c}), consider the natural commutative diagram with each row a fiber sequence:  
\begin{align*}
\xymatrix{
\Gamma_c(f^{-1}(-\infty, a);F)\ar[d]\ar[r]& \Gamma_c(X;F)\ar[r]\ar@{=}[d] &\Gamma_c(f^{-1}([a, \infty);F)\ar[d]^{\varphi}\\
\Gamma_c(f^{-1}(-\infty, b);F)\ar[r] &\Gamma_c(X;F)\ar[r] &\Gamma_c(f^{-1}([b, \infty);F)
}.
\end{align*}
It suffices to show that $\varphi$ is an isomorphism. By the properness of $f|_{\Supp(F)}$, this is equivalent to showing that $\varphi': \Gamma(f^{-1}([a, d];F)\to \Gamma(f^{-1}([b, d];F)$ is an isomorphism, for a fixed $d\in (b,\infty)$. 

We observe that under the condition $\{(x,-df_x): x\in f^{-1}[a,b)\}\cap \SS(F)=\varnothing$, there exists $\ep>0$ such that $\{(x,-df_x): x\in f^{-1}(a-\ep,b)\}\cap \SS(F)=\varnothing$ as well. This can be seen as follows. First, by properness of $f|_{\Supp(F)}$, we have $f(\{x: -df_x=0\}\cap \Supp(F))$ is closed, Since it is disjoint from $[a, b)$, it is disjoint from $(a-\epsilon_1, b)$ for some $\epsilon_1>0$. Second, let 
\begin{align*}
&W_f:=\{(x, [-df_x]): x\in \Supp(F)\cap f^{-1}(a-\ep_1, b)\}\subset T^\infty X\\
&S:=\SS(F)^\infty\cap T^\infty X|_{f^{-1}(a-\ep_1,b)}. 
\end{align*}
Then under the composition $T^\infty X\overset{\pi}{\to} X\overset{f}{\to} \bR$, we have both $W_f\to (a-\ep_1, b)$ and $S\to (a-\ep_1, b)$ are proper, so is $W_f\cap S\to (a-\ep_1, b)$. In particular, $f( \pi(W_f\cap S))\cap  (a-\ep_1, b)$ is closed in $ (a-\ep_1, b)$. Since by assumption $f( \pi(W_f\cap S))\cap  [a, b)=\varnothing$, $f( \pi(W_f\cap S))$ is disjoint from $(a-\epsilon, b)$ for some $0<\epsilon\leq \epsilon_1$. Hence the observation follows. 

Now using (\ref{eq: ii, microlocal Morse lemma}), the morphism $\varphi'$ factors as the isomorphisms
\begin{align*}
 \Gamma(f^{-1}[a, d];F)\simeq \varinjlim_{a'\in (a-\ep, a)}\Gamma(f^{-1}(a', d];F)\overset{\sim}{\to} \Gamma(f^{-1}[b, d];F). 
\end{align*}
The proof is complete. 
\end{proof}

\subsection{The sheaf of microlocal sheaves}
\label{subsec:TSOMS}

This is another ``optional'' section, describing an approach to constructing sheaves of brane structures, or more general microlocal categories, that we will not work out in detail.

For each conic subset $U \subset T^* M$, Kashiwara and Schapira introduce a category of ``microlocal sheaves on $U$,'' called $D^b(M,U)$.  It is defined to be the Verdier quotient of the bounded derived category of sheaves on $M$ by the subcategory of sheaves with singular support outside of $U$.  
When $U$ is open\footnote{If $U$ is not open, the class of sheaves with singular support in the complement of $U$ is not closed under infinite direct sums} we make a similar definition, using $\infty$-categorical localization \S\ref{subsec:localization} in place of the Verdier quotient.  More precisely we define $\Shall(M,U)$ to be the right orthogonal to the full subcategory of sheaves whose singular support is in the complement of $U$.
\subsubsection{The presheaf of microlocal sheaves}
The assignment $U \mapsto \Shall(M,U)$ from conic open subsets of $T^* M$ to full subcategories of $\Shall(M)$ is inclusion-preserving. Each of the inclusions $\Shall(M,U) \to \Shall(M,V)$ has a continuous left adjoint, and by \cite[Cor. 5.5.3.4]{higher-topoi}, the left adjoints to these left adjoints assemble to a contravariant functor from the poset of conic open subsets of $T^* M$ to $\St_{\bk}$, i.e. to a presheaf on $T^* M$ that as in \cite[\S 3.1]{TZ} we denote by $\MSh^{\mathrm{p}}$.

The equality $\SS(F) \cap U = \SS(F') \cap U$ holds whenever $F$ and $F'$ become isomorphisms in $\Shall(M,U)$ --- this follows from the triangle inequalities.  Given a closed conic subset $Z \subset T^* M$, we define $\MSh^{\mathrm{p}}_Z$ to be the presheaf on $T^* M$ by
\[
\Gamma(U,\MSh^{\mathrm{p}}_Z) = \{F \in \Shall(M,U) \mid \SS(F) \cap U \subset Z\}
\]
This presheaf is supported on $Z$ in the sense that its sections over any open set in the complement of $Z$ give the zero category.

\subsubsection{Sheafification of the presheaf of microlocal sheaves}\label{subsubsec: MSh_Z}
We write $\MSh_Z$ for the sheafification of $\MSh^{\mathrm{p}}_Z$ with values in $\St_{\bk}$.  Let us first make a warning about sheafification, that makes it difficult to identify $\MSh_Z$ in general.
The $\infty$-category $\St_{\bk}$ is not compactly generated.  Because of this, it is not possible to tell whether a morphism between sheaves of categories is an equivalence, by checking that it is an equivalence on stalks. On the other hand, one could sheafify $\MSh^{\mathrm{p}}_Z$ in $\widehat{\Cat}_\infty$ (notation as in Appendix \ref{appendix B, descent}) which has a stalkwise criterion for equivalences. We refer the reader to \cite[Remark 6.1]{Nadler-Shende} for more comments on this. 

For example as $U$ runs through the open subsets of the real line $\bR$, the $\infty$-categories $\Shall(U)$ and their full subcategories $\Loc(U)$ assemble to sheaves of presentable stable $\infty$-categories, let us denote them by $\Loc$ and $\Shall$.  It follows from \cite[Thm. 5.5.3.18]{higher-topoi} that the inclusion functor $\Loc \to \Shall$ induces an equivalence of stalks.

However when $Z$ is a conic Lagrangian contained in the conormal variety of some Whitney stratification, $\MSh^{\mathrm{p}}_Z$ has the following constructibility property:
\begin{itemize}
\item the presheaf is constructible along $Z$ in the sense that every point has a fundamental system of neighborhoods $z \in Z$ for which the restriction maps $\Gamma(U_i,\MSh^{\mathrm{p}}_Z) \to \Gamma(U_j,\MSh^{\mathrm{p}}_Z)$ are equivalences.  
\end{itemize}
This property is inherited by $\MSh_Z$.  When $\cC$ and $\cC'$ are sheaves of $\infty$-categories obeying such a constructibility property, we have a stalkwise criterion for equivalences.  For example it follows that $\MSh_Z$ is locally constant over the complement of some codimension $1$ set in $Z$.

\subsection{Constructible sheaves}
\label{subsec:constructible-sheaves}
Write $\Loc(X,\bk) \subset \Shall(X,\bk)$ for the full subcategory of locally constant sheaves.  If $X$ is a smooth manifold and $\cS$ is a Whitney stratification of $X$, we say that $F \in \Shall(X,\bk)$ is $\cS$-quasiconstructible if its restriction to each stratum is locally constant.  We say that $F$ is $\cS$-constructible if furthermore its stalk at each point is a perfect $\bk$-module (i.e. a compact object of $\Mod(\bk)$).  The full subcategory of $\Shall(X,\bk)$ spanned by the $\cS$-quasiconstructible sheaves is closed under small colimits --- in particular it is an object of $\St_{\bk}$ that we denote by $\Sh_{\cS}(X,\bk)$. 

We warn that it is not usually the case that $\Sh_{\cS}(X,\bk)$ is generated under colimits by the $\cS$-constructible sheaves --- this is not even true for locally constant sheaves. For example, if $X=S^1$, then $\Loc(X,\bC)\simeq \QCoh(\Spec\ \bC[t,t^{-1}])$. The finite rank local systems are corresponding to torsion coherent sheaves (i.e. those with proper support). Since $\Hom(M, \bC(t))=0$ for all torsion $M$, we see that $\bC(t)$ is not generated by finite rank local systems under taking colimits. However it is true when $\cS$ is a regular cell complex.

\subsubsection{Operations}
\label{subsec:constructible-operations}

Suppose $X$ and $Y$ are Whitney stratified manifolds with stratifications $\cS_X$ and $\cS_Y$. 
We say that $f:X \to Y$ is a stratified mapping if there is a factorization $X \to \overline{X} \to Y$, and a stratification of $\overline{X}$, such that $X \to \overline{X}$ is the inclusion of an open union of strata and $\overline{X} \to Y$ is proper and restricts to a submersion on each stratum. In general,  suppose $X'$ and $Y'$ are Whitney stratified manifolds and $f: X'\to Y'$ is a stratified map. Let $X\subset X'$ and $Y\subset Y'$ be the union of some of the strata in $\cS_{X'}$ and $\cS_{Y'}$, respectively, such that $f(X)\subset Y$. Then we say $f|_X: X\to Y$ a stratified mapping for the Whitney stratified spaces $X$ and $Y$. 
If $f$ is a stratified mapping then $f^*, f_*, f_!$ and $f^!$ preserve the subcategories of $\cS$-quasiconstructible sheaves. Moreover, we see that $f_*:\Sh_{\cS_X}(X;\bk) \to \Sh_{\cS_Y}(Y;\bk)$ is always continuous. 

Let's sketch a proof of the fact that the four functors preserve $\cS$-quasiconstructible sheaves. 
For $f$ open, this directly follows from the existence of good tubes \cite[\S 2.4]{Nadler-Tilting} and Thom's first and second isotopy Lemmas \cite[\S 11]{Mather}. In particular, one sees that $f_*$ is continuous on quasiconstructible sheaves. For $f$ proper, the assertion for $f_*=f_!$ follows again from Thom's isotopy Lemmas. 
For $f$ a topological submersion, the assertion for $f^*$ is obvious and the assertion for $f^!$ follows from (\ref{eq:submersion}). For $f$ a closed embedding, the assertion for $f^*$ and $f^!$ follows from the fiber sequences 
\begin{align*}
f_!f^!\to id\to j_*j^*,\quad j_!j^!\to id\to f_*f^*
\end{align*}
where $j$ is the open inclusion of the complement of $f(X)\subset Y$. In general, to prove $f^*$ and $f^!$ preserve $\cS$-quasiconstructible sheaves, one can do induction on the dimension of $Y$. By the above observations, WLOG, we may assume $f$ is proper. If $Y$ is a point, there is nothing to prove. Suppose we have proved the case for $\dim Y'<n$. Now let $\dim Y=n$ and let $j_n: Y_n\hookrightarrow Y$ be the open inclusion of the union of top dimensional strata in $\cS$ and $i_{<n}: Y_{<n}\hookrightarrow Y$ be the closed embedding of the complement. Let $j: f^{-1}(Y_{n})\hookrightarrow X$ and $i: f^{-1}(Y_{<n})\hookrightarrow X$ be the open and closed embeddings, respectively. 
We only sketch the proof for $f^!$, since the proof for $f^*$ is similar. We have the fiber sequence 
\begin{align*}
i_{!}i^!f^!\to f^!\to j_{*}j^!f^!
\end{align*} 
where (1) $i_!i^!f^!\cong  i_!(f i)^!$ preserves $\cS$-quasiconstructible sheaves from induction, for $f i$ factors through $i_{<n}$; (2) $j_*j^!f^!\cong j_*(f j)^!$  preserves $\cS$-quasiconstructible sheaves, since $f j$ factors through $j_n$, and on any contractible open $U$ in $Y_n$, we can write the local system as $p^!M_{\pt}$, $p: U\to \pt$ and $M\in \Mod(\bk)$, so then we reduce the case locally to the case that the base space is a point. Now the assertion for $f^!$ follows.

\subsubsection{Microlocal stalks}\label{subsubsec: microstalks}
If $X$ is a manifold and $\cS$ is a Whitney stratification of $X$, let $\Lambda_\cS \subset T^* X$ denote its conormal variety
\begin{equation}
\label{eq:conormal-variety}
\Lambda_{\cS} := \bigcup_{S_{\alpha} \in \cS} T^*_{S_{\alpha}} X
\end{equation}
Let $\Lambda_{\cS}^\sm \subset \Lambda_{\cS}$ be the smooth part.  For every $(x,\xi) \in \Lambda^\sm_{\cS}$, the microlocal stalk functor $\mu_{x,\xi}:\Sh_{\cS}(X) \to \Mod(\bk)$ is defined by
\begin{equation}
\label{eq:microstalk}
\mu_{x,\xi}(F) := \Sigma^d \mathrm{Cone}\left(\Gamma_c(B_{\epsilon}(x) \cap \upsilon^{-1}(\eta,\infty),F) \to \Gamma_c(B_{\epsilon}(x),F)\right)
\end{equation}
where $\upsilon$ is an $\cS$-stratified Morse function (\cite[\S 2.1]{stratified-morse-theory}) defined in a neighborhood $B_\epsilon(x)$ of $x$, with $d\upsilon_x = \xi$, $d$ is the index of $\upsilon$ at $x$, and $0 < \eta \ll \epsilon$ are sufficiently small (in the sense that $(\eta,\epsilon)$ belongs to a fringed set \cite[\S 5]{stratified-morse-theory}).  The ball $B_{\epsilon}(x)$ is defined with respect to a Riemannian metric on $X$.  The ``the stratified Morse theorem, part B'' of \cite{stratified-morse-theory} implies that this functor is well-defined up to isomorphism --- independent of the metric and of $\upsilon, \epsilon,\eta$.  For any positive real $r$ we have canonically $\mu_{x,r\xi} F \cong \mu_{x,\xi}$.  The functor $\mu_{x,\xi}$ is sensitive to the stratification only in the sense that if $\cS'$ refines $\cS$, the functor $\mu_{x,\xi}$ may not be defined on $\cS'$-constructible sheaves if $(x,\xi)$ is not a smooth point of $\Lambda_{\cS'}$ (equivalently, if the $\cS'$-stratum containing $x$ is of lower dimension than the $\cS$-stratum containing $x$).

\subsubsection{Singular support}
\label{subsubsec:sing-supp}
For constructible sheaves, the notion of singular support \S\ref{subsubsec:sing-supp-27} specializes to the following.  If $\Lambda \subset \Lambda_{\cS}$ is the closure of a union of components of $\Lambda_{\cS}^{\sm}$, we write $\Sh_{\Lambda}(X)$ for the full subcategory of  $\Sh_{\cS}(X)$ spanned by sheaves $F$ that obey the condition:
\begin{equation}
\label{eq:SS}
\mu_{x,\xi}(F) = 0 \text{ whenever }(x,\xi) \in \Lambda_{\cS} -\Lambda
\end{equation}
The category $\Sh_{\Lambda}(X)$ is independent of the stratification $\cS$ with $\Lambda_{\cS} \supset \Lambda$.  If $F \in \Sh_{\Lambda}(X)$, then we say that the singular support of $F$ is contained in $\Lambda$, and write $\SS(F) \subset \Lambda$.  As $\mu_{x,\xi}$ is continuous, $\Sh_{\Lambda}(X) \in \St_{\bk}$ is presentable.

Kashiwara and Schapira give a variety of bounds on the singular support of $F'$ in terms of the singular support of $F$, when $F'$ is obtained by applying a sheaf operation ($f_!,f^*,\otimes_{\bk},\cdots$) to $F$.  They are all easy to verify, even in the presentable setting, if one assumes that $F$ is quasiconstructible and that $f$ is a stratified mapping.

\subsection{Contact transformations}
\label{subsec:contact-transformations}

Let $\tilde{V}$ and $\tilde{W}$ be vector spaces of the same dimension.  Write $T^{*,\circ} \tilde{V}$ and $T^{*,\circ} \tilde{W}$ for the deleted cotangent bundles, i.e. the complements of the zero section in $T^* \tilde{V}$ and $T^* \tilde{W}$.  Let $\bL$ denote the germ of a smooth conic Lagrangian near the point $(0,p_0) \in T^{*,\circ} \tilde{V}$.  A contact transformation is a kind of Darboux coordinate change near $(0,p_0)$, with additional requirements imposed to make it induce functors on sheaf categories associated to $\bL$.

\subsubsection{Definition}
\label{subsec:def-contact-transf}
A contact transformation\footnote{In \cite{KS}, a contact transformation only needs to satisfy condition (1). Condition (2) specifies a generic class of contact transformations. Since we always use this generic class of contact transformations, we simply refer them as contact transformations.} of $\bL$ is the germ of a conic Lagrangian $\chi \subset T^{*,\circ} \tilde{V}  \times T^{*,\circ} \tilde{W}$ through a point $((0,-p_0),(0,p'))$ that obeys the following:
\begin{enumerate}
\item $\chi$ is the germ of the graph of a symplectomorphism $U_0 \stackrel{\sim}{\to} U'$, where $U_0 \subset T^* \tilde{V}$ is a neighborhood of $(0,p_0)$ and $U' \subset T^* \tilde{W}$ is a neighborhood of $(0,p')$.  (The ``graph'' is modified to be Lagrangian, by applying $(x,\xi) \mapsto (x,-\xi)$ in the $T^* \tilde{V}$ factor.)
\item Write $\chi(\bL)$ for the image of $\bL \cap U_0$ under such a symplectomorphism.  Then $\chi$ is the germ of an open subset of the conormal bundle to a smooth hypersurface in $\tilde{V} \times \tilde{W}$ passing through the origin, and $\chi(\bL)$ is the germ of an open subset of the conormal bundle to a smooth hypersurface in $\tilde{W}$ passing through the origin.
\end{enumerate}
It is proved in \cite[Prop. A.2.5, Cor. A.2.7]{KS} that for every $\bL$ one can find such a $\chi$.  

\subsubsection{Effect of a contact transformation on categories} 
If $\tilde{V}_0 \subset \tilde{V}$ and $\tilde{W}_0 \subset \tilde{W}$ are sufficiently small neighborhoods of the origins, by a slight abuse of the notion of ``germ'' we may regard $\bL$ as a conic Lagrangian in $T^* \tilde{V}_0$ whose front projection to $\tilde{V}_0$ is closed (and recall the assumption (\ref{eq:finite-to-one})) and $\chi(\bL)$ as a conic Lagrangian in $T^* \tilde{W}_0$.  
The theory of contact transformations in \cite[\S 7.2]{KS} gives an equivalence between the localizations (recall that $\zeta_M$ denotes the zero-section in $T^*M$ for any manifold $M$)
\begin{equation}
\label{eq:L-chiL}
\Sh_{\bL \cup \zeta_{\tilde{V}_0}} (\tilde{V}_0;\bk)/\Loc(\tilde{V}_0) \simeq \Sh_{\chi(\bL) \cup \zeta_{\tilde{W}_0}}(\tilde{W}_0;\bk)/\Loc(\tilde{W}_0)
\end{equation}
The notation $\Sh_{\Lambda}$ is as in \S\ref{subsubsec:sing-supp} and the notation $C/C'$ is as in \S\ref{subsec:localization}.  This is proved in Appendix \ref{Appendix: contact} \S\ref{subsec: Appendix Proof of eq:L-chiL}.

The equivalence \eqref{eq:L-chiL} is described by a kernel $K \in \Sh(\tilde{V}_0 \times \tilde{W}_0)$ \S\ref{subsec:kernel}.  If $H \subset \tilde{V}_0 \times \tilde{W}_0$ is a hypersurface as in \S\ref{subsec:def-contact-transf}(2), then putting $K$ to be the constant sheaf on $H$ (extended by zero), one computes that $K^{-1}$ \eqref{eq:K-inverse} is also a suspension of constant sheaf on $\mathrm{flip}(H) \subset \tilde{V}_0 \times \tilde{W}_0$.

\subsubsection{Contact transformations and microlocal stalks}
\label{subsubsec:ctams}
Since $\chi(\bL)$ is the conormal of a smooth hypersurface in $\tilde{W}_0$, the category on the right-hand side of \eqref{eq:L-chiL} is easy to describe: the functor $\mu_{(0,p')}$ \eqref{eq:microstalk} gives an equivalence to $\Mod(\bk)$.  In fact the numerator of the right-hand side is described in the introduction \S\ref{intro:smooth-part}; in those terms the microlocal stalk functor is the cone on the map \eqref{eq:smooth-part}.

The group of origin-preserving diffeomorphisms of $\tilde{W}$ acts on the set of contact transformations for $\bL,(0,p_0)$ in the obvious way, with each diffeomorphism moving $(0,p')$ to $(0,p'')$ for another nonzero $p''$.  If $\chi_1$ and $\chi_2$ differ by such a diffeomorphism, the composite functors $\Sh_{\bL \cup \zeta_{\tilde{V}_0}} (\tilde{V}_0;\bk)/\Loc(\tilde{V}_0) \cong \Mod(\bk)$ are canonically isomorphic in a strong sense, so that there is a natural map from the space of diffeomorphism-classes of contact transformations and the space of equivalences to $\Mod(\bk)$.

\subsection{Guillermou-Kashiwara-Schapira}
\label{subsec:GKS}
Let $M$ be a manifold, let $I \subset \bR$ be an open set containing $0$ and let $\varphi_t, t \in I$ be a one-parameter family of symplectomorphisms of $T^{*,\circ} M$ obeying
\[
\varphi_0(x,\xi) = (x,\xi) \qquad \forall t\in I, r \in \bR_{>0}, \, \varphi_t(x,r\xi) = r\varphi_t(x,\xi)
\]
In other words, $\varphi_t$ is a homogeneous Hamiltonian isotopy of $T^{*,\circ} M$.  Suppose that $\varphi$ is horizontally compactly supported, in the sense that there is a compact open subset $A \subset M$ such that $\varphi_t$ is the identity outside of $T^* A$ for all $t$.
The main theorem in \cite{GKS} is the uniqueness and existence of a kernel $K_t$ (\S\ref{subsec:kernel}) on $M \times M$ such that
\begin{enumerate}
\item $K_t \circ$ and $K_t^{-1} \circ$ are inverse equivalences on $\Shall(M,\bk)$, notation as in \eqref{eq:K-inverse}.
\item Away from the zero section one has $\SS(K_t \circ F) = \varphi_t(\SS(F))$ for all $F\in \Shall(M,\bk)$. 
\end{enumerate} 
They furthermore prove that each $K_t$ is locally bounded.  If we make an additional tameness hypothesis on $\varphi$, the $K_t$ are constructible on some Whitney stratification of $M \times M$.  With this tameness hypothesis in place, essentially the same proof works in the presentable setting, giving constructible $K_t$ whose fibers are perfect $\bk$-modules --- this perfectness property replacing the locally bounded property, when $\bk$ is a ring spectrum.

We indicate a few details.  The graphs $G_t$ of $\varphi_t$ determine a conic Lagrangian subset $G \subset T^{*,\circ} (M \times M \times I)$, so that $G_t$ is the projection to $T^*(M \times M)$ of the slice at $t \in I$.  We seek a sheaf $K$ with singular support in $G \cup \zeta_{M \times M \times I}$ whose restriction to $M \times M \times \{0\}$ is the constant sheaf supported on the diagonal.  The uniqueness of $K$ can be proved exactly as in \cite[Prop. 3.2]{GKS}.  

For the existence of $K$, we impose the additional tameness assumption that $\varphi$ is contained in the conormal variety \eqref{eq:conormal-variety} of a Whitney stratification of $M \times M \times I$ --- for example this holds if $\varphi$ is the Hamiltonian flow of a subanalytic function.  Without further loss of generality we can assume that the projection $M \times M \times I \to I$ is a stratified Morse function, so that to construct $K$, it suffices to construct it locally around each critical value of $M \times M \times I \to I$ and argue that they can be glued.  Steps (A) and (B) of the proof of \cite[Prop. 3.5]{GKS} give the local construction and the gluing argument. 

Another consequence of the existence of $K$, is that for each $t \in I$ and each conic Lagrangian $\Lambda \subset T^{*,\circ} M$, the restriction functor
\begin{equation}
\label{eq:312}
\Sh_{G \circ \Lambda}(M \times I) \to \Sh_{G_t \circ \Lambda}(M \times \{t\})
\end{equation}
is an equivalence --- the proof is the same as \cite[Prop. 3.12]{GKS}. It also follows from the uniqueness of $K$ that the GKS construction induces a monoidal functor 
\begin{align*}
\text{Path}^{sm,\delta}_*\text{Ham}(M)\longrightarrow \Shall(M\times M\times I),\ \varphi\mapsto K, 
\end{align*}
where  $\text{Path}^{sm,\delta}_*\text{Ham}(M)$ is the space of smooth Hamiltonian isotopies $\varphi=\{\varphi_t, t\in I\}, \varphi_0=id$, equipped with the discrete topology. The space $\text{Path}^{sm,\delta}_*\text{Ham}(M)$ has an obvious group structure $(\varphi'\circ\varphi)_t=\varphi'_t\circ\varphi_t$, and $\Shall(M\times M\times I)$ has the obvious convolution monoidal structure coming from $M\times M$ (i.e. it is doing $K'_t\circ K_t$), and the monoidal functor is with respect to these monoidal structures. 

\section{Wavefronts, brane structures, and Nadler-Zaslow}
\label{sec:wavefront-notation}
We will mostly apply the ideas of \S \ref{sec:som} in a somewhat special situation, for which collect notation here.

\subsection{Cotangent bundles}
\label{subsec:cotangent-bundles}
If $M$ is a manifold, its cotangent bundle $T^* M$ is an symplectic symplectic manifold: if $x = (x_1,\ldots,x_n)$ are coordiantes on $M$ and $\xi = (\xi_1,\ldots,\xi_n)$ are the dual cotangent coordinates, the canonical one-form and symplectic form are 
\[
\alpha := \xi_1 dx_1 + \cdots + \xi_n  dx_n \qquad \omega = d\alpha
\]
We let $\overline{T}^* M := T^* M \, \amalg\,  T^{\infty} M$ be the relative compactification of $T^* M$ given in \cite[\S 5.1.1]{NZ}.  The boundary $T^{\infty} M$ of $\overline{T}^* M$ is called ``contact infinity''

\subsection{Exact Lagrangians}
\label{subsec:exact-lags}
An immersion $i:L \to T^* M$ is Lagrangian if $i^* \omega$ vanishes.  It is \emph{exact} if $i^* \alpha$ is an exact one-form.  A function $f:L \to \bR$ for which $df = \alpha$ is called a primitive for $L$.  If $L$ is connected then the primitive is unique up to an additive constant. 
Throughout the paper, we assume that the image of the projection $L\rightarrow M$ is precompact.   
 We write $\Lambda$ for the intersection $\overline{L} \cap T^{\infty} M$. It is not always necessary that $\Lambda$ is smooth, but we will assume that $L$ and $\overline{L}$ are tame in the sense of \cite[\S 5.2]{NZ}, which in particular implies that $\Lambda$ is Legendrian wherever it is smooth.  We let $\Phi \subset M$ denote the image of $\Lambda$ under the front projection.

We will say that an exact Lagrangian is \emph{lower exact} if any primitive for it is proper and bounded above.  The tameness condition implies furthermore that the primitive has only finitely many critical values.
The standard Lagrangians of \cite{NZ}, when taken over an open subset of $M$, are always lower exact, as $\log(m) \to -\infty$ at the boundary of the open set.   The costandard Lagrangians are not lower exact, however one can always find a Hamiltonian equivalent lower exact Lagrangian.  For example the figure below depicts a costandard Lagrangian in $T^* \bR$ on the left, and a Hamiltonian equivalent lower exact Lagrangian on the right.
\begin{center}
\begin{tikzpicture}
\draw [thick,  domain=-1.8:1.8, samples=40] 
 plot ({\x}, {\x/(4-\x*\x)} );
\end{tikzpicture}
\qquad
\begin{tikzpicture}
\draw [thick, domain = -1.0:1.0,samples = 100]
plot ({0.16*\x*\x*\x*\x*\x*\x*\x*\x*\x-1.19*\x*\x*\x*\x*\x*\x*\x+5.09*\x*\x*\x*\x*\x-10.32*\x*\x*\x+8.08*\x},{(1.8*\x)/(4-(1.8*\x)*(1.8*\x))});
\end{tikzpicture}

\end{center}

\subsection{Legendrian lift and wavefront projection}
\label{subsec:bold-notation}
We equip $(T^* M) \times \bR$ with the one-form $dt - \alpha$, where $t$ denotes the projection to $\bR$.  (This is sometimes denoted $J^1 M$, the one-jet space of $M$.)  The projection $(T^* M) \times \bR \to T^*M$ is called the \emph{Lagrangian projection}, as it carries generic Legendrian subanifolds to immersed exact Lagrangians.  The projection $(T^* M) \times \bR \to M \times \bR$ is called the \emph{wavefront projection}, and it carries generic Legendrian submanifolds to immersed hypersurfaces in $M \times \bR$ with wavefront singularities.  We will only need to impose \eqref{eq:finite-to-one}, a weaker condition than genericity on $L$, that the wavefront projection is finite-to-one and is one-to-one over the smooth locus of the wavefront projection.

We often use the following notation.  If $L$ is connected and $L \hookrightarrow T^*M$ is an exact Lagrangian immersion, we let $\bL \subset (T^* M) \times \bR$ denote the Legendrian submanifold that lifts $L$, and $\bF \subset M \times \bR$ the image of $\bL$ under $\XZ$.  These subsets are well-defined up to translation in the $\bR$-coordinate.

If $\bL$ obeys \eqref{eq:finite-to-one}, then we may recover $\bL$ from $\bF$ by writing a branch of $\bF$ as the graph of a smooth function $t(x_1,\ldots,x_n)$ defined over an open subset of $M$, and putting $\xi$ to be the derivative of $t$.  The meaning of ``wavefront singularities'' is that the closure of the set where this recipe is well-defined is smooth, and we recover $\bL$ as this closure.

Our tameness hypotheses along with lower exactness \S\ref{subsec:exact-lags} imply that the closure $\overline{\bF}$ of $\bF$ in $M \times \leftbracket -\infty,\infty \rightparen$ is $\bF \amalg \Phi \times \{-\infty\}$.  It furthermore implies that $M \times \leftbracket -\infty,\infty \rightparen$ has a Whitney stratification refining the decomposition by 
\begin{equation}
\label{eq:refine-this}
\bF,\, M \times \bR - \bF,\,  \Phi \times \{-\infty\},\, \text{ and }M \times \{-\infty\}
\end{equation}
If $\Lambda$ is smooth, then for $K \ll 0$ we have a stratification-preserving homeomorphism of $\overline{\bF}$ with $\Phi \times \leftbracket -\infty,K \rightparen$.

\subsection{Example}
\label{example:wavefront}
For example, the wavefront projection of the right-hand Lagrangian  $\bL$ of the figure of \S\ref{subsec:exact-lags} looks like this:

\begin{center}
\begin{tikzpicture}
\draw [thick, domain = -1.0:1.0,samples = 100]
plot ({0.16*\x*\x*\x*\x*\x*\x*\x*\x*\x-1.19*\x*\x*\x*\x*\x*\x*\x+5.09*\x*\x*\x*\x*\x-10.32*\x*\x*\x+8.08*\x},{-0.102*\x*\x*\x*\x*\x*\x*\x*\x + 0.606*\x*\x*\x*\x*\x*\x -2.411*\x*\x*\x*\x + 2.64*\x*\x +1.023*ln(100-81*\x*\x)});
\end{tikzpicture}
\end{center}
In the next figure we indicate a sheaf on $\bR^2$ with singular support in $\bL$ that is free of rank one on the under-side of the wavefront and zero on the other side --- the solid line indicates ``standard'' boundary conditions and the dotted lines indicate ``costandard'' boundary conditions, in the sense of \cite[\S 4.1]{NZ}.

\begin{center}
\begin{tikzpicture}
\draw [thick, dashed, domain = -1.0:-0.5928782718649503,samples = 100]
plot ({0.16*\x*\x*\x*\x*\x*\x*\x*\x*\x-1.19*\x*\x*\x*\x*\x*\x*\x+5.09*\x*\x*\x*\x*\x-10.32*\x*\x*\x+8.08*\x},{-0.102*\x*\x*\x*\x*\x*\x*\x*\x + 0.606*\x*\x*\x*\x*\x*\x -2.411*\x*\x*\x*\x + 2.64*\x*\x +1.023*ln(100-81*\x*\x)});
\draw [thick, domain = -0.5928782718649503:0.5928782718649503,samples = 100]
plot ({0.16*\x*\x*\x*\x*\x*\x*\x*\x*\x-1.19*\x*\x*\x*\x*\x*\x*\x+5.09*\x*\x*\x*\x*\x-10.32*\x*\x*\x+8.08*\x},{-0.102*\x*\x*\x*\x*\x*\x*\x*\x + 0.606*\x*\x*\x*\x*\x*\x -2.411*\x*\x*\x*\x + 2.64*\x*\x +1.023*ln(100-81*\x*\x)});
\draw [thick,  dashed, domain = 0.5928782718649503:1,samples = 100]
plot ({0.16*\x*\x*\x*\x*\x*\x*\x*\x*\x-1.19*\x*\x*\x*\x*\x*\x*\x+5.09*\x*\x*\x*\x*\x-10.32*\x*\x*\x+8.08*\x},{-0.102*\x*\x*\x*\x*\x*\x*\x*\x + 0.606*\x*\x*\x*\x*\x*\x -2.411*\x*\x*\x*\x + 2.64*\x*\x +1.023*ln(100-81*\x*\x)});
\node at (0,4.2) {$\mathbf{k}$};
\end{tikzpicture}
\end{center}

\subsection{$\bL$ as a conic Lagrangian}  
\label{subsec:conic-bL}

Let $L$ be as in \S\ref{subsec:exact-lags} and $\bL,\bF$ as in \S\ref{subsec:bold-notation}.  If we write $(x,t,\xi,\tau)$ for coordinates on $T^* (M \times \bR)$ (where $(x,\xi) \in T^* M$ and $(t,\tau) \in T^* \bR$, in the notation of \S\ref{subsec:cotangent-bundles}) then it is natural to regard $T^* M \times \bR$ as the contact hypersurface $\{\tau = -1\}$ in $T^* (M \times \bR)$, and $\bL$ determines a conic Lagrangian
$
\Cone(\bL) \subset T^*(M \times \bR)
$
which we define as
\begin{equation}
\label{eq:cone-bL}
\Cone(\bL) = \zeta_{M \times \bR} \cup \left\{(x,t;\xi,\tau) \mid \tau < 0 \text{ and }(x,-\xi/\tau,t) \in \bL\right\}.
\end{equation}
We will write $\Sh_{\bL}(M \times \bR)$ in place of $\Sh_{\Cone(\bL)}(M \times \bR)$.  
If $U$ is an open subset of $M \times \bR$ and $\Omega$ is a connected component of $\XZ^{-1}(U) \cap \bL$, we similarly define $\Cone(\Omega) \subset T^* U$, and write $\Sh_{\Omega}(U)$ in place of $\Sh_{\Cone(\Omega)}(U)$. Since there is a canonical identification between $L$ and $\bL$, in the following, by some abuse of notations, we will freely identify ``things" on $L$ with ``things" on $\bL$. For example, for any open subset $\Omega\subset L$ (resp. $\Omega\subset \bL$), we equally regard it as an open subset in $L$ and $\bL$. When one comes to talk about sheaves on $M\times\bR$ and singular support conditions, one always deal with the Legendrian $\bL$.

Let $\bF^\sm$ be the smooth portion of $\bF \subset M \times \bR$, and let $\bL^\sm$ be the preimage under the front projection $\XZ$ --- thus, $\XZ|_{\bL^{\sm}}:\bL^{\sm} \stackrel{\sim}{\to} \bF^{\sm}$.  Each point $p \in \bL^{\sm}$ determines a microlocal stalk functor \eqref{eq:microstalk}
\begin{equation}
\label{eq:mup}
\mu_p:\Sh_{\bL}(M \times \bR) \to \Mod(\bk),
\end{equation}
as we may find a stratification $\cS$ of $M \times \bR$ such that the connected components of $\bF^{\sm}$ are the codimension-one strata and the connected components of $M \times \bR - \bF$ are the open strata.  If $p,q$ are two points in the same connected component of $\bL^{\sm}$, then $\mu_p \cong \mu_q$.

By compactifying this stratification refining \eqref{eq:refine-this}, one sees that the restriction of the functor $\proj_{1,*}:\Shall(M \times \bR) \to \Shall(M)$ to $\Sh_{\bL}(M \times \bR)$ is continuous by \S\ref{subsec:constructible-operations}.

\subsection{Lower exact Lagrangian and eventually conic Lagrangians}\label{subsec:eventually-conic}
In this section, we show that there is a close relation between lower exact Lagrangians (with a smooth Legendrian boundary) and eventually conical Lagrangians. As an application, this perspective gives an alternative way of deforming products of eventually conic Lagrangians to that in \cite{GPS}. 

Fix a Riemannian metric on $M$, and let $ST^* M \subset T^* M$ be the unit sphere bundle with its contact $1$-form $\alpha_1$. Let $R_{\alpha_1}$ denote the Reeb vector field. Let $(T^* M)_{|\xi| \leq 1} \subset T^* M$ be the unit disk bundle.  It is a Weinstein domain (in the sense of e.g. \cite[Def. 11.10]{CielebakEliashberg}). If $L$ is a lower exact Lagrangian with a primitive $f: L\rightarrow \bR$ and $\Lambda=L^\infty$ is smooth (i.e. $\overline{L}$ is a smooth manifold with a smooth boundary), there exists $K_0\ll 0$ such that each level set $\{f=K\}, K\leq K_0$, gives a smooth Legendrian submanifold $\Lambda_{f=K}$ in $q^\circ: T^{*,\circ}M/\bR_{>0}\cong  ST^*M$ via the obvious quotient map $T^{*,\circ}M\longrightarrow T^{*,\circ}M/\bR_{>0}$, where $T^{*,\circ}M:=T^*M-\zeta_M$. In the following, for a time dependent (contact) Hamiltonian $H$ with Hamiltonian vector field $X_H$, let $\varphi_H^{[a,s]}$ (or $\varphi_{H_s}^{s-a}$) denote the flow satisfying 
\begin{align*}
\frac{d}{ds}\varphi_H^{[a,s]}=X_{H_s}(\varphi_H^{[a,s]}),\ \varphi_{H}^{[a,a]}=id. 
\end{align*}
If $a=0$, then $\varphi_H^{[a,s]}$ is also denoted by $\varphi_H^{s}$. Then for $K_0$ sufficiently negative, the family of Legendrians $\{\Lambda_{f=K}\}_{-\infty\leq K\leq K_0}$ starting from $\Lambda_{f=-\infty}:=\Lambda$ is a smooth Legendrian isotopy $\{\Lambda_s\}_{0\leq s\leq 1}$ with $\Lambda_0=\Lambda$, after a renormalization of the interval $\eta: [-\infty, K_0]\overset{\sim}{\longrightarrow}[0,1]$ (one may use this condition on $L$ in place of the condition that $\Lambda=L^\infty$ is smooth and $L$ is tame).

\begin{lemma*}\label{Lag ends}
\begin{itemize}
\item[(a)] Under the above assumptions, there is a  time dependent (strictly) positive contact Hamiltonian function $H_L^s, s\in (0,1]$ (cf. \cite[2.3]{Geiges}), with $H_L^0:=\lim\limits_{s\rightarrow0^+}H_L^s=0$, such that $\varphi_{H_L}^{[a, b]}(\Lambda_a)=\Lambda_b$ for any $0<a\leq b\leq 1$. 

\item[(b)] There is a fringed set $I\subset \bR^2_+$ (cf. \cite[Section 5.2]{NZ}), such that for any $(s,t)\in I$, we have 
\begin{equation*}
\Lambda_s\cap \Lambda_{s+t}=\varnothing. 
\end{equation*}
In particular, for any $\epsilon>0$ there exists $\delta_0>0$ such that $\Lambda_{\epsilon+s}\cap \Lambda_{\epsilon+s+t}=\varnothing$ for any $(s,t)\in [0, \delta_0)\times (0,\delta_0)$. 
\end{itemize}
\end{lemma*}
\begin{proof}
(a) 
For any $(x,\xi)\in \{f=K\}$, let $(\bar{x}, \bar{\xi})\in \Lambda_{f=K}$ denote the corresponding point in $\Lambda_{f=K}\subset ST^*M$. The tangent space $T_{(x, \xi)}\{f=K\}$ can be identified with the graph of an element $b_{(x,\xi)}\in \Hom(T_{(\bar{x}, \bar{\xi})}\Lambda_{f=K}, \bR)$, where we identify $\bR\cong \bR\cdot Z_{(x,\xi)}$ with $Z$ the Liouville vector field. Note that the function $b_{(x,\xi)}$ is invariant under the dilation action by $Z$. i.e. if we dilate $L$ with $(x,\xi)$ moving to $(x,\xi')$, then the resulting function (using $Z_{(x, \xi')}$ instead) is the same. 
The gradient vector field $X_f$ of $f$ at $(x,\xi)\in \{f=K\}$, with respect to the induced metric on $L$, has the following decomposition 
\begin{align}
\nonumber&X_f=\lambda Z_{(x,\xi)}+\mu(R_{\alpha_1}+w'),\lambda,\mu\in \bR, \text{ where }w'\in \ker\alpha_1\subset T_{(\bar{x}, \bar{\xi})}ST^*M\text{ satisfying:}\\
\nonumber&\omega_{(\bar{x},\bar{\xi})}(v+b_{(x,\xi)}(v)\cdot Z, R_{\alpha_1}+w')=0\text{ for all }v\in T_{(\bar{x}, \bar{\xi})}\Lambda_{f=K}\\
\label{eq: w', b_x,xi}\Leftrightarrow&\omega_{(\bar{x},\bar{\xi})}(v,w')-b_{(x,\xi)}(v)=0, \text{ for all }v\in T_{(\bar{x}, \bar{\xi})}\Lambda_{f=K}.
\end{align}
Note that $w'$ is uniquely determined modulo $T_{(\bar{x}, \bar{\xi})}\Lambda_{f=K}$. 
By the assumption that $f$ has only finitely many critical values, we have $\mu>0$ for $K\ll 0$ (in particular, if $X_f$ is normalized to be $\frac{X_f}{|X_f|^2}$ so that $df(\frac{X_f}{|X_f|^2})=1$, then $\mu=\frac{1}{|\xi|}$, for $\alpha|_{L}=df$). 
Therefore, under the projection $T^{*,\circ}X\longrightarrow T^{*,\circ}X/\bR_{>0}\simeq ST^*X$, the push-forward of $X_f$ is a positive multiple of  the Reeb vector field modulo $\ker\alpha$. This means $\{\Lambda_{f=K}\}_{K\leq K_0}$, as a Legendrian isotopy starting from $\Lambda_{f=K_0}$ is generated by the flow of a  time-dependent vector field $Y^s$ with $\alpha_1(Y^s)|_{\Lambda_s}=-\frac{1}{|\xi|}$, then part (a) of the lemma follows.

(b)  follows from a similar argument as in the proof of \cite[Lemma 5.2.5]{NZ}. 
\end{proof}

Conversely, starting from a positive isotopy of Legendrians in $ST^*M$, we can construct a ``suspension Lagrangian" in $T^{*,\circ}M$ as follows. 

\begin{prop*}
\item[(i)]
Let $\Lambda_0$ be a smooth closed Legendrian in $ST^*M$. Given any positive isotopy $\Lambda_s, 0\leq s\leq 1$, there exists an exact (smooth, embedded) suspension Lagrangian $L^\epsilon_{\Lambda}\subset T^{*,\circ}M$ of $\{\Lambda_s\}_{0\leq s<\epsilon}$, for $\epsilon>0$ sufficiently small,  such that the level sets of any primitive project to $\{\Lambda_s\}_{0\leq s<\epsilon}$ under $q^\circ$. 

\item[(ii)]
Every such suspension Lagrangian $L^\epsilon_{\Lambda}$ is uniquely determined by a positive smooth function $c(s)$ on $[0,\epsilon)$, in that the levels of (the preimage of) $\Lambda_s$ with respect to any primitive $f$, denoted by $f(s)$ by some abuse of notations, satisfies 
\begin{align*}
\frac{df(s)}{ds}=c(s).
\end{align*}
. 
\end{prop*}
\begin{proof}
First, we give a construction of $L_\Lambda^\epsilon$. Fix a time-dependent positive Hamiltonian $H$ that generates the positive isotopy. We may choose $H$ so that $dH_s(R_\alpha)=0$ along $\Lambda_s$. We make the identitications: 
\begin{align*}
\varrho: \Lambda_0\times [0,1]&\overset{\sim}{\longrightarrow} \bigcup\limits_{s\in [0,1]}\Lambda_s\times \{s\}\\
\nonumber (p, s)&\mapsto (\varphi_H^s(p), s);\\
T^{*,\circ}M&\cong ST^*M\times \bR_+ \\
\xi&\mapsto ([\xi], |\xi|).
\end{align*}
By the previous lemma part (b), $L_\Lambda^\epsilon$, if exists, is determined by 
\begin{align}
\label{eq: L_Lambda, eps}\Lambda_0\times [0,1]\cong \bigcup\limits_{s\in [0,1]}\Lambda_s\times \{s\}&\hookrightarrow ST^*M\times \bR_+\\
\nonumber (p, s)&\mapsto (\varphi_H^s(p), \ell(p,s)),
\end{align}
for a height function $\ell(p, s)$ (also denoted by $\ell(\varphi_H^s(p), s)$ by some abuse of notations), subject to the condition corresponding to (\ref{eq: w', b_x,xi}) that in $ST^*M$
\begin{align*}
\big(\frac{1}{H_s}\iota_{X_{H_s}}d\alpha_1-\frac{d_p\ell}{\ell}\big)|_{\Lambda_s}=0.
\end{align*}
This implies that 
\begin{align*}
\ell(p, s)=\frac{c(s)}{H_s(\varphi_H^s(p))}
\end{align*}
for some positive function $c(s)$. On the other hand, this is a sufficient condition for the embedding (\ref{eq: L_Lambda, eps}) to be a Lagrangian. 

Lastly, we have 
\begin{align*}
\frac{df(s)}{ds}=\ell(p,s)\alpha_1(X_{H_s})|_{\Lambda_s}=c(s). 
\end{align*}
\end{proof}

We say an exact Lagrangian is \emph{eventually conical} if outside a compact region it is conical. 

\begin{corollary*}\label{cor: normalizedReeb}
\item[(i)]
After a normalized Reeb perturbation, an eventually conical exact Lagrangian $L$ is a lower exact Lagrangian.

\item[(ii)] Conversely, for any lower exact Lagrangian $L$, assume that there exists $\delta>0$, such that $\Lambda_s\cap \Lambda_t=\varnothing$ for all $s\neq t\in (0, \delta)$, then there exists a Hamiltonian isotopy that takes it to be eventually conic\footnote{The assertion holds even if the limit Legendrian $\Lambda_0$ is not smooth or embedded, i.e. the isotopy $\{\Lambda_s\}_{s\in (0,1]}$ does not need to extend to an isotopy $\{\Lambda_s\}_{s\in [0,1]}$ }.
\end{corollary*}

\begin{proof}
(i)
Using the above proposition, let $\Lambda_0=q^\circ(L^\infty)$, and let $H_s=\delta s$ and $c(s)=\frac{\delta }{s}$ for $s\in (0,\epsilon)$ (then $\ell=\frac{1}{s^2}$), and $\delta>0$. This gives a suspension Lagrangian whose primitive goes to $-\infty$ at the infinity ends. Since this is doing the time-$\delta$ Hamiltonian flow of $2\sqrt{|\xi|}$ on a portion of $\Cone(\Lambda_0)$, we can extend the Hamiltonian to be supported in a neighborhood of $T^\infty M$, and the resulting flow moves $L$ to be lower exact.

(ii) We start with $\{\Lambda_s\}_{0<s\leq 1}$,  $H_L^s$ (and $\eta$) as in Lemma \ref{Lag ends}, which determines $c(s)$ as in the above proposition. Note that we have $\int_{0_+}^1 c(s)ds=\infty$. For $\epsilon\in (0,1]$, choose a family of $C^{r+1}$ subanalytic cut-off functions $b_\epsilon: [0,1]\rightarrow [0,1]$, continuously depending on $\epsilon$, such that $b_\epsilon|_{[0,\epsilon]}=\epsilon$, $b_\epsilon(s)=s$ for $s\in [4\epsilon, 1]$, and $b_\epsilon'(s)>0$ on $(\epsilon,1)$. Define 
$\Lambda^\epsilon_s:=\Lambda_{b_\epsilon(s)}$. Then $\{\Lambda_s^\epsilon\}_{\epsilon< s\leq 1}$ is a positive isotopy of $\Lambda_{\epsilon}$, and the positive contact Hamiltonian $H^s_{L,\epsilon}$ for $\epsilon< s\leq 1$ can be chosen to be $b_\epsilon'(s)H^{b_\epsilon(s)}_L$. 
Choose any $0<\epsilon\ll \delta\ll 1$ such that the maximum of the height function $\ell$ on $\{f\geq \eta^{-1}(\delta)\}$ is strictly less than the minimum of the height function on $\{f\leq \eta^{-1}(\epsilon)\}$. Let $\delta(\epsilon)$ be any number in $(0, \delta-\epsilon)$. 
Choose $\varrho: (0,1]\rightarrow\bR_+$ such that $\varrho|_{[\epsilon+\delta(\epsilon), 1]}=1$ and $\varrho|_{(0, \epsilon+\frac{1}{2}\delta(\epsilon)]}=\frac{s}{4\epsilon}$. 

Let $c_\epsilon(s)=\frac{1}{\varrho(s)}b'_\epsilon(s)c(b_\epsilon(s)), \epsilon<s\leq 1$. Consider the suspension Lagrangian $L^\circ_{\Lambda,\epsilon}$ associated to $\big(\{\Lambda_s^\epsilon\}_{\epsilon<s\leq 1}, c_\epsilon(s)\big)$. Let $f^\circ_\epsilon$ be the primitive on it so that $f^\circ_\epsilon(1)=K_0$. 
Since
\begin{align*}
L^\circ_{\Lambda,\epsilon}|_{\eta^{-1}(4\epsilon)\leq f^\circ_\epsilon\leq K_0}=L|_{\eta^{-1}(4\epsilon)\leq f\leq K_0},
\end{align*}
we can glue $L^\circ_{\Lambda,\epsilon}$ with $L|_{f\geq K_0}$ along the end given by $\Lambda_1^\epsilon=\Lambda_1$. Denote the resulting Lagrangian by $L^\circ_{\Lambda,\epsilon}$  as well. Near the end of $L^\circ_{\Lambda,\epsilon}$ given by $\Lambda_\epsilon^\epsilon=\Lambda_\epsilon$,  the Lagrangian $L^\circ_{\Lambda,\epsilon}$ is determined by the graph of the map
\begin{align}\label{eq: Lambda_eps}
\Lambda_\epsilon\times (\epsilon, 2\epsilon)\cong\bigcup\limits_{\epsilon<s<2\epsilon} \nonumber\Lambda_s^\epsilon\times\{s\}&\hookrightarrow ST^*M\times \bR_+\\
(p, s)&\mapsto (\varphi_{H_{L,\epsilon}}^{[\epsilon, s]}(p),\frac{c(b_\epsilon(s))}{\varrho(s) H_L^{b_\epsilon(s)}}=\frac{1}{\varrho(s)}\ell(\varphi_{H_L}^{[\epsilon, b_\epsilon(s)]}(p), b_\epsilon(s)))
\end{align}
where $\ell$ is the height function associated to $L$.

We make the zero extension of $H^s_{L,\epsilon}$ on $s\in [0, 1]$. We also make the extension of $\ell_\epsilon(p, s):=\ell(\varphi_{H_L}^{[\epsilon, b_\epsilon(s)]}(p), b_\epsilon(s)))$ originally defined on $\Lambda_\epsilon\times [\epsilon, 2\epsilon)$ to $\Lambda_\epsilon\times (0, 2\epsilon)$, given by 
\begin{align*}
\ell_\epsilon(p, s)=\ell_{\epsilon}(p, \epsilon), \text{ for }s\in (0, \epsilon).
\end{align*}
Since $\frac{\partial^k \ell_\epsilon(p,s)}{\partial s^k}|_{s=\epsilon}=0$ for all $1\leq k\leq r$, the extension is well defined.

Now we can make a $C^r$ subanalytic extension of (\ref{eq: Lambda_eps}) as 
\begin{align*}
\Lambda_\epsilon\times (0, 2\epsilon)\cong\big(\Lambda_\epsilon\times (0, \epsilon]\big)\cup \bigcup\limits_{\epsilon<s<2\epsilon} \nonumber\Lambda_s^\epsilon\times\{s\}&\hookrightarrow ST^*M\times \bR_+\\
(p, s)&\mapsto (\varphi_{H_{L,\epsilon}}^{[0, s]}(p), \frac{1}{\varrho(s)}\ell_\epsilon(p, s)).
\end{align*}  
This extends $L^\circ_{\Lambda,\epsilon}$ to be an eventually conic Lagrangian, denoted by $L_{\Lambda, \epsilon}$. By the assumption in the statement of (ii) and the relation between $\epsilon$ and $\delta$, we see the Lagrangian is embedded. Thus the family $\{L_{\Lambda, \epsilon}\}_{\epsilon>0}$ gives an eventually conic isotopy of $L$ as desired. 
\end{proof}

\begin{remark*}
The property of being eventually conic is not well behaved under taking products, but the property of being lower exact is. The above gives a way to deform the product of two eventually conic Lagrangians to be eventually conic, by first deforming the two Lagrangians to be lower exact and then deforming the product lower exact Lagrangian to be eventually conic. Indeed, it is not hard to see that for two lower exact Lagrangians $L_1\subset T^*M_1$ and $L_2\subset T^*M_2$ both satisfying the assumption in Corollary \ref{cor: normalizedReeb} (ii), their product in $T^*M_1\times T^*M_2$ also satisfies that assumption. 
The construction certainly generalizes to arbitrary Liouville manifolds, giving an alternative to that in \cite{GPS}. 

\end{remark*}

For later applications and for the main results, we make the following assumption on any lower exact Lagrangian $L$. The lower exact Lagrangians produced from eventually conic Lagrangians automatically satisfy the assumption. 

\begin{assumption}\label{assump: lel}
Under the setting of Lemma \ref{Lag ends} (a), we assume there exists $\delta>0$ such that for any $s\neq t\in [0,\delta)$, 
\begin{align*}
\Lambda_s\cap \Lambda_{t}=\varnothing. 
\end{align*}
\end{assumption}

We remark that Assumption \ref{assump: lel} will be used in Subsection \ref{subsec:total-Tam} and is needed for making the main results true (e.g. the Nadler--Zaslow functor and Theorem \ref{thm:312}). Here is a counterexample to the main results when Assumption \ref{assump: lel} is not satisfied\footnote{We thank Wenyuan Li for bringing this counterexample to our attention. In this counterexample, $L^\infty$ is not smooth. We expect that for a subanalytic smooth Legendrian isotopy $\{\Lambda_s\}_{s\in [0,1]}$ that is strictly positive for $s>0$, Assumption \ref{assump: lel} always holds, but we are not aware of a proof.}. Let $\Lambda'$ be any (connected closed) smooth \emph{loose} Legendrian in $T^*M_1\times\bR$, and let $M=M_1\times \bR$. Let $I$ be the minimal (closed) interval that contains the image of $\Lambda'$ to $\bR$. For any closed interval $I'$ (of positive length), there is a unique (orientation preserving) affine transformation of $\bR$ that sends $I$ to $I'$, which induces a diffeomorphism on $M_1\times \bR$. Let $\Lambda'_{I'}$ be the Legendrian in $T^*M_1\times\bR$ whose front projection is the image of $\XZ(\Lambda')$ under that diffeomorphism. 
Let $f_+: (-1,1)\rightarrow \bR$ be a smooth subanalytic function such that $\lim_{x\rightarrow\pm 1}f_-(x)=-\infty$ and let $\delta: (-1,1)\rightarrow \bR$ be a subanalytic function such that $\delta'(x)<0$ everywhere, $\lim_{x\rightarrow -1}\delta(x)=-1$ and $\lim_{x\rightarrow 1}\delta(x)=-5$. Let $f_-(x)=f_+(x)+\delta(x)$. 
Let $\Lambda$ be the suspension Legendrian of $\Lambda'_{[f_-(x), f_+(x)]}, x\in (-1,1)$, in $T^*(M_1\times (-1,1))\times \bR.$ Then $\Lambda$ is also a properly embedded Legendrian in $T^*M\times \bR$ (under the obvious embedding), whose Lagrangian projection in $T^*M$ is a smooth embedded lower exact Lagrangian. By the loose condition on $\Lambda'$, there is no interesting sheaf in $\Sh_\Lambda(M\times \bR)$ (except for the locally constant sheaves). On the other hand, it is clear that Assumption \ref{assump: lel} does not hold in this example.

\subsection{Brane structures, Tamarkin-Guillermou theory, and the Nadler-Zaslow functor}
\label{sec:TSOBS}
Let $L \to T^* M$ be a lower exact Lagrangian immersion with Legendrian lift $\bL \subset (T^* M) \times \bR$.  In the rest of this section we construct $\Brane_{\bL}$, a locally constant sheaf of stable $\infty$-categories on $\bL$, with fiber $\Mod(\bk) \in \St_{\bk}$.  It is a special case of the more general construction of \S\ref{subsec:TSOMS} which puts a sheaf of categories on any conic Lagrangian subset of $T^* M$.

\subsection{Good neighborhoods and good cylinders}
\label{subsec:good-cylinders}
We define a \emph{good neighborhood} in $M \times \bR$ (with respect to $\bL$) to be a contractible open subset $U \subset M \times \bR$ with a piecewise-smooth boundary, such that $\partial U$ is transverse to $\bF$, and the following condition holds for each of the finitely many connected components $\Omega \subset \XZ^{-1}(U)\cap \bL$:
\begin{quote}
there exists a point $p = (x,t;\xi) \in \Omega$, and a stratification-preserving flow of $U$ (for a stratification compatible with $\XZ(\Omega)$) that contracts $U$ to $x$.
\end{quote}
We say that a good neighborhood is a \emph{good cylinder} if it has the form $B \times I$ where $B$ is a contractible open subset of $M$, $I \subset \bR$ is an open interval, and $(B \times \partial I) \cap \bF$ is empty.  Our tameness assumptions on $L$ \S\ref{subsec:exact-lags} imply that every point of $M \times \bR$ is contained in a good neighborhood (resp. good cylinder) that can be taken arbitrarily small (see \S\ref{subsec:good-cyl-cover} for the proof).  We call the connected components $\Omega$ of $\XZ^{-1}(U)\cap \bL \subset \bL$ the ``branches'' of $\bL$ in the good neighborhood (resp. good cylinder).  For each branch $\Omega$, there is a local contact transformation on $T^* (B \times I) - \zeta_{B \times I}$ sending $\Omega$ to the conormal of a smooth hypersurface \ref{subsec:contact-transformations}, and we can deduce the following:

\begin{lemma*}[Branch lemma]
Let $U$ be a good neighborhood and let $\Omega$ be a branch of $\bL$ over $U$.  Let $p \in \Omega \cap \bL^{\mathit{sm}}$.  Then the sequence of presentable stable $\infty$-categories
\begin{equation}
\label{eq:branch-lemma}
\Loc(U) \to \Sh_{\Omega}(U) \xrightarrow{\mu_p} \Mod(\bk)
\end{equation}
where the first map is the full inclusion and the second map is the microlocal stalk
\eqref{eq:microstalk}, \eqref{eq:mup}, is a localization sequence \S\ref{subsec:localization}.  That is, $\Loc(U)$ is the kernel of $\mu_{p}$ and the right adjoint to $\mu_{p}$ is fully faithful.
\end{lemma*}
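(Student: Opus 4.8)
The plan is to transport the problem, via a contact transformation, to the model situation where $\Omega$ is the conormal of a smooth coorientable hypersurface; there the claim is essentially the content of \S\ref{subsubsec:ctams}. I would first record that model. Let $S$ be a smooth coorientable hypersurface through a point $q$ of a contractible open set $W$, dividing $W$ into two contractible chambers $R_1, R_2$, and let $\Lambda_S \subset T^* W$ denote its (cooriented) conormal. The singular-support condition \eqref{eq:SS} identifies a sheaf $F \in \Sh_{\Lambda_S}(W)$ with the single datum of the restriction map $F(R_1) \to F(R_2)$ taken against the coorientation, as in the picture of \S\ref{intro:smooth-part}; thus $\Sh_{\Lambda_S}(W) \simeq \Fun(\Delta^1, \Mod(\bk))$, with $\Loc(W)$ the full subcategory of those arrows that are equivalences and with the microlocal stalk $\mu_q$ identified, up to the shift $\Sigma^{\pm 1}$ appearing in \eqref{eq:microstalk}, with the cofiber functor $(X \to Y) \mapsto \Cone(X \to Y)$. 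By \S\ref{subsubsec:ctams} the functor $\mu_q$ descends to an equivalence $\Sh_{\Lambda_S}(W)/\Loc(W) \xrightarrow{\sim} \Mod(\bk)$; concretely $\ker \mu_q = \Loc(W)$ and $\Cone$ admits the fully faithful right adjoint $Y \mapsto (0 \to Y)$, since $\Maps\bigl((X \to X'),(0 \to Y)\bigr) \simeq \mathrm{fib}\bigl(\Maps(X',Y) \to \Maps(X,Y)\bigr) \simeq \Maps(\Cone(X \to X'),Y)$. So \eqref{eq:branch-lemma} holds in the model case.

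Next I would set up the reduction. By the good-neighborhood hypothesis --- as recalled just above the statement --- for the branch $\Omega$ over $U$ there is a local contact transformation on $T^*(B \times I) - \zeta_{B \times I}$ carrying $\Cone(\Omega)$ to a conormal $\Lambda_S$ as above. By \S\ref{subsec:contact-transformations} this transformation is realized by a kernel, namely a shifted constant sheaf on a hypersurface extended by zero, which preserves the subcategories $\Loc$ and becomes, after localizing, an equivalence
\[
\Sh_{\Omega}(U_0)/\Loc(U_0) \;\xrightarrow{\ \sim\ }\; \Sh_{\Lambda_S}(W)/\Loc(W)
\]
for a sufficiently small good cylinder $U_0 \subset U$ around the base point of $p$; moreover by \S\ref{subsubsec:ctams} it intertwines $\mu_p$ with $\mu_q$. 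The main obstacle is then to pass from this small $U_0$ back to the given good neighborhood $U$: one must show that the restriction functor $\Sh_{\Omega}(U) \to \Sh_{\Omega}(U_0)$ is an equivalence --- the analogous statement for $\Loc$ is immediate, both sides being local systems on a contractible space. For this I would use the defining feature of a good neighborhood, the stratification-preserving contraction of $U$ onto a point compatible with $\XZ(\Omega)$, together with the homotopy invariance of categories of constructible sheaves under stratified deformation retractions; alternatively one can realize this contraction by a compactly supported homogeneous Hamiltonian isotopy and appeal to the equivalence \eqref{eq:312}.

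Granting this, the conclusion is formal. Since $\mu_p$ annihilates $\Loc(U)$ it factors through $\Sh_{\Omega}(U)/\Loc(U)$, and by the two previous steps this factorization is an equivalence onto $\Mod(\bk)$. Hence $\ker(\mu_p) = \Loc(U)$, and the right adjoint of $\mu_p$ --- which exists because $\mu_p$ is a continuous functor of presentable stable $\infty$-categories, \S\ref{subsec:constructible-operations} --- is the inverse equivalence composed with the fully faithful right adjoint of the Bousfield localization $\Sh_{\Omega}(U) \to \Sh_{\Omega}(U)/\Loc(U)$, and is therefore fully faithful. This is precisely the assertion that \eqref{eq:branch-lemma} is a localization sequence.
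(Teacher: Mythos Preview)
Your proposal is correct and follows the same route as the paper: the paper does not give a separate proof but simply states the lemma as a consequence of the sentence immediately preceding it, namely that for each branch $\Omega$ there is a contact transformation sending $\Omega$ to the conormal of a smooth hypersurface (invoking \S\ref{subsec:contact-transformations} and \S\ref{subsubsec:ctams}). Your write-up fills in exactly the details the paper leaves implicit --- the explicit identification $\Sh_{\Lambda_S}(W)\simeq\Fun(\Delta^1,\Mod(\bk))$ in the model case, and the use of the stratification-preserving contraction in the definition of good neighborhood to pass from a small $U_0$ to all of $U$ --- so there is nothing to correct.
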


If we write $\bL\vert_U := \XZ^{-1}(U)\cap \bL$ for the union of all branches of $\bL$ over $U$, the square in the following diagram
\begin{equation}
\label{eq:branch-variant}
\xymatrix{
\Loc(U) \ar[r] \ar[d] & \Sh_{\Omega}(U) \ar[r]^{\mu_p} \ar[d] & \Mod(\bk) \\
\Sh_{\bL\vert_U - \Omega}(U) \ar[r] & \Sh_{\bL}(U) \ar[r]_{\mu_p} & \Mod(\bk)
}
\end{equation}
commutes --- each arrow in the square is a full inclusion.  Both of the rows are localization sequences, so that $\Sh_{\Omega}(U)/\Loc(U)$ is canonically isomorphic to $\Sh_{\bL}(U)/\Sh_{\bL\vert_U - \Omega}(U)$.

Let $\Nb(L)$ denote the set of ``branched neighborhoods'', i.e. the set of pairs $(U,\Omega)$ such that $U \subset M \times \bR$ is a good neighborhood and $\Omega$ is a branch of $\bL$ over $U$.  We endow $\Nb(U)$ with a partial order by putting $(U,\Omega) \leq (U',\Omega')$ if $U \subset U'$ and $\Omega = U \cap \Omega'$.  The poset $\Nb(L)$ indexes a complete cover of $\bL$ in the sense of \S\ref{subsec:covers}(3), by $(U,\Omega) \mapsto \Omega$.  It also projects finite-to-one onto the poset of open subsets of $M \times \bR$, by $(U,\Omega) \mapsto U$.

The same remarks hold for the poset of ``branched cylinders'', i.e. the set of triples $(B,I,\Omega)$ such that $B \times I$ is a good cylinder and $(B \times I,\Omega) \in \Nb(L)$.  We will denote this poset by $\Cylb(L)$.

\subsection{Multiple branches}
\label{subsec:multi-branch}
A good neighborhood $U$ meets the complement of $\bF$ in finitely many chambers.  If $U = B \times (a,b)$ is a good cylinder, we may speak of the ``bottom chamber'' that is incident with $B \times \{a\}$ and the ``top chamber'' that is incident with $B \times \{b\}$.  (It is possible for these to coincide.)  The image of the right adjoint of $\mu_p$ \eqref{eq:branch-lemma} (i.e. the right orthogonal to $\Loc(U)$) is the full subcategory of $\Sh_{\Omega}(U)$ spanned by sheaves that vanish in the bottom region of $U$.

Let $\Sh_{\Omega}^0(U)$ denote the full subcategory of sheaves that vanish in the \emph{top} region of $U$.  Although it is not the image of the right adjoint of $\mu_p$ \eqref{eq:branch-lemma} it is canonically equivalent to $\Sh_{\Omega}(U)/\Loc(U)$ by the composite $\Sh^0_{\Omega}(U) \subset \Sh_{\Omega}(U) \to \Sh_{\Omega}(U)/\Loc(U)$.  As $U$ is contractible there is a \emph{left} adjoint to the inclusion functor $\Loc(U) \subset \Sh_{\Omega}(U)$, which carries $F$ to the tensor product of $\omega_U$ and the constant sheaf with fiber $\Gamma_c(U;F)$ \eqref{eq:submersion}, and it vanishes on $\Sh_{\Omega}^0(U)$.  In other words
\begin{equation}
\label{eq:colocal}
\Sh_{\Omega}^0(U) \subset \Sh_{\Omega}(U) \to \Loc(U)
\end{equation}
is a localization sequence \S\ref{subsec:localization}.

\begin{prop*}
\item[(i)] 
Let $U$ be a good cylinder and let $\bL\vert_U = \Omega_1 \amalg \cdots \amalg \Omega_k$ be the branches of $\bL$ over $U$.  Then the functor
$\bigoplus_{i = 1}^k \Sh_{\Omega_i}^0(U) \to \Sh^0_{\bL\vert_U}(U)
$
induced by the inclusions $\Sh_{\Omega_i}^0(U) \to \Sh_{\bL\vert_U}^0(U)$ is an equivalence.

\item[(ii)] For $\eta>0$ sufficiently small relative to $B_\alpha \times I_{\alpha}$, the functor $T_{(-\eta, 0]}$ (introduced in Section \ref{subsec:tam-conv-2}) induces fully faithful embeddings
\begin{align}
\label{eq:3102}
\Sh^0_{\Omega_{\alpha}}(B_{\alpha} \times I_{\alpha}) \hookrightarrow \Sh_{\Omega_{\alpha} \cup T_{-\eta} \Omega_{\alpha}}(B_{\alpha} \times I_{\alpha}) \\ 
\label{eq:3103} \Sh_{\bL}^0(B_\alpha\times I_\alpha) \hookrightarrow \Sh_{\bL\cup T_{-\eta}\bL}(B_\alpha\times I_\alpha)
\end{align}
The right adjoint functors to \eqref{eq:3102}, resp. \eqref{eq:3103}, are given by microlocalization along $\Omega_{\alpha} \subset \Omega_{\alpha} \cup T_{-\eta} \Omega_{\alpha}$, resp. $\bL \subset \bL \cup T_{-\eta} \bL$, along with \eqref{eq:branch-lemma}.
\end{prop*}

\begin{proof}
(i)
It suffices to prove that $\Hom(F_i,F_j)= 0$ when $F_i \in \Sh_{\Omega_i}^0(U)$, $F_j \in \Sh_{\Omega_j}^0(U)$ and $i \neq j$. Since everything is local, we may take $M=V$, where $V$ is a vector space with an Euclidean inner product. Let $(0, 0;p_i, -dt)$ be the center of $\Omega_i$. Now we use notations from Appendix \ref{Appendix: contact}. We may take $U$ to be $B_\epsilon\times \bR$ for all $\epsilon>0$ sufficiently small. 
Applying $K_{\chi_\sfF}\circ $ for the Fourier transform $T^*V\to T^*V^*$, we get an equivalence
\begin{align*}
\Sh_{\Omega_i\sqcup \Omega_j}^0(B_\epsilon\times\bR)\simeq \Sh^{<0, D_\epsilon^*V^*\times\bR}_{\chi_{\sfF}(\Omega_i)\sqcup\chi_{\sfF}(\Omega_j)}(V^*\times\bR), i\neq j. 
\end{align*} 
First, as in the proof of Lemma \ref{lemma: Phi equivalence}, for each $\Omega_i$, we have 
$$\XZ(\Omega_i)\subset (B^*_{\nu_i(\ep)}+p_i)\times (-\epsilon h_i(\epsilon), \epsilon h_i(\epsilon)), $$ 
for some $h_i(\ep)\to 0,\ \nu_i(\ep)\to 0$ as $\ep\to 0$. 
Second, let $\cQ_i=T^*(\overline{B^*}_{\nu_i(\ep)}+p_i)\times [-\epsilon h_i(\epsilon), \epsilon h_i(\epsilon)]$. 
Using that $\rho_{\cQ_{i,\ep}}$, as in (\ref{eq: cor: C, Phi_C, ep,delta}) for $W=V^*$, is essentially surjective, we know that for every $G_i\in \Sh^{<0, D_\epsilon^*V^*\times\bR}_{\chi_{\sfF}(\Omega_i)}(V^*\times\bR)$, there exists $\widetilde{G}_i\in \Sh^{<0}_{\cQ_i}(V^*\times\bR)$ such that $\rho_\ep (\widetilde{G}_i)\cong G_i$. Since
\begin{align*}
\epsilon>\frac{2\ep(h_i(\ep)+h_j(\ep))}{|p_i-p_j|-\nu_i(\ep)-\nu_j(\ep)} \text{ as }\ep\to 0,
\end{align*}
we are able to apply Corollary \ref{lemma: disjoint support} to $\widetilde{G}_i$ with 
\begin{align*}
U_i=(B^*_{\nu_i(\ep)}+p_i)\times (-\epsilon h_i(\epsilon), \epsilon h_i(\epsilon)), 0<\epsilon\ll |p_i-p_j|
\end{align*}
and get that $\Hom(G_i, G_j)\cong\Hom(\rho_\ep(\widetilde{G}_i), \rho_\ep(\widetilde{G}_j))=0$ for $i\neq j$, hence the same holds for $\Hom(F_i, F_j)$.

(ii) Using (i), it suffices to prove (\ref{eq:3102}). Again, WLOG, we may replace $B_\alpha\times I_\alpha$ (resp. $\Omega_\alpha$) by $B_\epsilon\times \bR\subset V\times\bR$ (resp. $\Omega$), as in Appendix \ref{Appendix: contact} Notation \ref{Appendix: Notations}. The argument is a slight modification of that in \S\ref{subsec: proof of prop: K_chi, Modk}. 

First, by replacing $\Omega$ with $\Omega\cup T_{-\eta}\Omega$ for $0<\eta\ll \ep$ and using that the standard contact transformations from \S\ref{subsubsec: std contact tr} are $\bR$-equivariant, the argument in \S\ref{subsec: proof of prop: K_chi, Modk} applies and gives the upper equivalence in the following diagram: 
\begin{align*}
\xymatrix{
\Sh^{<0}_{\Omega\cup T_{-\eta}\Omega}(B_\ep\times\bR)
\ar[r]^{\sim} &\Sh^{<0}_{\Omega_0'\cup T_{-\eta}\Omega_0'}(B_\ep^*\times\bR)\\
\Sh^{<0}_{\Omega}(B_\ep\times\bR)\ar[u]
\ar[r]^{\sim} &\Sh^{<0}_{\Omega_0'}(B_\ep^*\times\bR)\ar[u]
}. 
\end{align*}
Second, by functoriality, we also get the commutative diagram above, with the two vertical arrows the natural functors. Since $\Omega_0'$ is the Legendrian boundary of the negative conormal of $V^*\times\{0\}$ in $V^*\times\bR$, the right upward arrow in the above diagram is fully faithful, then so is the left upward arrow. Now the proof is complete.  
\end{proof}

\begin{remark*}
Following Remark \ref{remark: cM0;p0toMShp}, part (i) in the above proposition directly generalizes when $\Omega_i$ is replaced by $C_i$ satisfying the conditions in Remark \ref{remark: cM0;p0toMShp} but with a shifted center $(0,0;p_i)$, $p_i\neq p_j$ for $i\neq j$. 

\end{remark*}

\subsection{Definition of $\Brane_L$ and $\mumon$}
We will give a recipe for the sheaf of $\infty$-categories $\Brane_L$ on $L$ in terms of a functor $\Nb(L) \to \St_{\bk}$ which we will define first, in \eqref{eq:as-functor}.  We will abuse notation and denote this functor by $\Brane_L$ as well --- to avoid confusion we will write $\Gamma(\Omega,\Brane_L)$ for the sections of $\Brane_L$ over an open subset $\Omega \subset L$, and $\Brane_L(U,\Omega)$ for the value of \eqref{eq:as-functor} on $(U,\Omega) \in \Nb(L)$.

For each $(U,\Omega) \in \Nb(L)$, the category $\Shall(U;\bk)$ has a pair of full subcategories
\[
\Loc(U) \subset \Sh_{\Omega}(U) \subset \Shall(U)
\]
which are preserved by the pullback functors $\Shall(U) \to \Shall(U')$ whenever $(U',\Omega') \leq (U,\Omega)$.  We may regard it as a functor
\begin{equation}
\label{eq:to-arrow-cat}
\Nb(L) \to \Fun(\Delta^1,\St_{\bk}) \qquad (U,\Omega) \to \big[ \Loc(U) \to \Sh_{\Omega}(U) \big]
\end{equation}
We define 
\begin{equation}
\label{eq:as-functor}
\Brane_L(U,\Omega) := \Sh_{\Omega}(U)/\Loc(U)
\end{equation}
or more formally to be the composite
${\Nb(L) \xrightarrow{\eqref{eq:to-arrow-cat}} \Fun(\Delta^1,\St_{\bk}) \xrightarrow{\eqref{eq:cone-of-categories}} \St_{\bk}}$.

By \eqref{eq:branch-lemma}, the value of \eqref{eq:as-functor} on any element of $\Nb(L)$ is (noncanonically) equivalent to $\Mod(\bk)$ --- in fact it carries every arrow in $\Nb(L)$ to an equivalence.  That is, \eqref{eq:as-functor} is a locally constant functor (in the sense of \S\ref{subsec:lcso}) $\Nb(L) \to \St_{\bk}$.  Since $\Nb(L)$ is a complete covering of $L$ by contractible open subsets, this functor determines a locally constant sheaf on $L$ --- it is given explicitly by
\begin{equation}
\label{eq:as-limit}
\Gamma(\Omega,\Brane_L) = \varprojlim_{\Nb(L)_{/\Omega}} \Brane_L(U',\Omega')
\end{equation}
where the limit is over the subposet of $\Nb(L)_{/\Omega} \subset \Nb(L)$ given by pairs $(U',\Omega')$ with $\Omega' \subset \Omega$.

For each $(U,\Omega) \in \Nb(L)$, we have the composite functor
\[
\Sh_{\bL}(M \times \bR) \to \Sh_{\bL\vert_U}(U) \to \Sh_{\bL\vert_U}(U)/\Sh_{\bL\vert_U - \Omega}(U) \cong \Brane_L(U,\Omega)
\]
where the first arrow is restriction, the second arrow is localization, and the third is the equivalence given by \eqref{eq:branch-variant}.  By the universal property of limits they assemble to a functor $\Sh_{\bL}(M \times \bR) \to \varprojlim \Brane_L(U,\Omega)$, i.e. to
\begin{equation}
\label{eq:mumon}
\Sh_{\bL}(M \times \bR) \to \Gamma(L,\Brane_L)
\end{equation}
which we denote by $\mumon$, microlocal monodromy.

\subsection{Cylindrical \v Cech covers}
\label{subsec:good-cyl-cover}

We will say that a \v Cech covering (in the sense of \S\ref{subsec:covers}) of $M \times \bR$ is a ``good cylindrical covering'' if every chart in the covering is either empty or a good cylinder.  If $\cV$ is a good cylindrical covering, then the set of triples $(B,I,\Omega) \in \Cylb(L)$ such that $B \times I \in \cV$ is itself a \v Cech covering of $L$.  Thus, for a good cylindrical covering we can compute $\Gamma(L,\Brane_L)$ as a smaller limit:
\[
\Gamma(L,\Brane_L) \cong \varprojlim_{(B,I,\Omega) \mid B \times I \in \cV} \Brane_L(B \times I,\Omega)
\]

Let us prove here that good cylindrical covers exist, and can be taken arbitrarily fine.  That is, if $\cV'$ is any covering of $M \times \bR$, there is a locally finite refinement $\cV$ such that $\cV$ is a good cylindrical cover.

\begin{proof}
Let $\{\Omega_\alpha'\}_{\alpha\in J'}$ be the covering of $\bL$ determined by $\cV'$. Fix a Whitney stratification $\cS$ on $M\times\bR$ compatible with $\bF$, whose open strata are the connected components of $M\times\bR-\bF$. We prove the statement by induction on the dimension of the strata. We will use $\cS_{\geq k}$ (resp. $\cS_{\leq k}$ and $\cS_{=k}$) to denote the subcollection of strata that have dimension greater than or equal to $k$ (resp. less than or equal to $k$ and equal to $k$). 

First, for each $q=(x_q, t_q)\in \cS_{=0}$ and $p\in\XZ^{-1}(q)\cap \bL$, one can choose a good cylinder $(B, I, \Omega)$ with $p\in \Omega$ as follows.   Let $\mathrm{r}_{x_q}^2$ (resp. $\mathrm{r}_{t_q}^2$) be the distance squared function from $x_q$ in $M$ (resp. from $t_q$ in $\bR$). Choose an $\epsilon_0>0$ such that $\mathrm{r}_{x_q}^2+\mathrm{r}_{t_q}^2$ has no $\Lambda_{\cS}$-critical value in $(0, \epsilon_0^2)$ and such that
\[(\{x_q\}\times\bR)\cap\bF\cap B_{\epsilon_0}(q)=\{q\}.\]
 In particular, we have $\partial (B_{\epsilon}(q))$ is transverse to $\cS$ for any $0<\epsilon<\epsilon_0$. Fix an $\epsilon\in (0, \epsilon_0)$ and let $\cS^\epsilon_q$ be the unique minimal stratification determined by the two transverse stratifications $\cS$ and $\{B_{\epsilon}(q), \partial (B_{\epsilon}(q)), M-\overline{B}_{\epsilon}(q)\}$. Choose $\delta_0>0$ such that $\mathrm{r}_{x_q}^2$ has no $\Lambda_{\cS^\epsilon_q}$-critical value in $(0,\delta_0^2)$ and $\bF\cap B_\epsilon(q)\cap \{\mathrm{r}_{t_q}^2\geq \epsilon^2-\delta_0^2\}=\varnothing.$ Now fix any $\delta\in (0,\delta_0)$, let $B=\{\mathrm{r}_{x_q}^2<\delta^2\}\subset M$, $I=\{\mathrm{r}_{t_q}^2<\epsilon^2-\delta^2\}\subset \bR$ and $\Omega$ be the unique component of $\bL\cap (T^*(B)\times I)$ that contains $p$, then $(B, I ,\Omega)$ is a good cylinder. One can make $\Omega$ arbitrarily small, and in particular, contained in $\bigcap\limits_{p\in\Omega'_\alpha}\Omega'_\alpha$.

Second, for any $S_\beta\in\cS$ of codimension greater than 0, the projection $\pi_M|_{S_\beta}: S_\beta\rightarrow \pi_M(S_\beta)$ is a local diffeomorphism. After a refinement of $\cS$, we can assume that $\pi_M|_{S_\beta}$ is a diffeomorphism, and there exists a tubular neighborhood $\cT_\beta$ of $S_\beta$, such that the projection 
\[
\cT_\beta\cap \left(\bigcup\limits_{S_\alpha\in\cS_{\leq \dim M}}S_\alpha\right)\rightarrow M
\]
is a stratified map for a stratification $\widetilde{\cS}$ on $M$ compatible with $\pi_M(S_\beta)$. Now choose a tube system of $\widetilde{\cS}$ with the tube data of $\pi_M(S_\beta)$ given by $(\widetilde{\cT}_\beta, \widetilde{\pi}_\beta, \widetilde{\rho}_\beta)$, where $\widetilde{\pi}_\beta: \widetilde{\cT}_\beta\rightarrow \pi_M(S_\beta)$ is a retraction and $\widetilde{\rho}_\beta$ is a distance function. Then the preimage $\pi_M^{-1}(\widetilde{\cT}_\beta)\cap \cT_\beta$ defines a tube for $S_\beta$, in the sense that there is a retraction from $\pi_M^{-1}(\widetilde{\cT}_\beta)\cap \cT_\beta$ to $S_\beta$ and $\rho_\beta=\sqrt{\widetilde{\rho}_\beta^2+\Delta t^2}$ is a distance function compatible with it, where $\Delta t$ is the difference in the $t$-coordinate. Now repeating the argument in the first step, we can show that for any point $(x,t)\in S_\beta$, there exists a contractible neighborhood $U_x\subset \pi_M(S_\beta)$ with piecewise smooth boundary, $\delta_x>0$ and $a_x<t<b_x$, such that 
\begin{equation}\label{eq: a good cylinder}
(\widetilde{\pi}_\beta^{-1}U_x\cap \{\tilde{\rho}_\beta<r\})\times (a_x,b_x)
\end{equation}
is a good cylinder for all $0<r<\delta_x$, and it is contained in one of the open sets in $\cV'$. It is clear that if we replace $U_x$ by any smaller contractible neighborhood $U_x'\subset U_x$ in (\ref{eq: a good cylinder}), then it is also a good cylinder.

Next, suppose we have chosen a good cylindrical cover $\{(B_\alpha, I_\alpha, \Omega_\alpha)\}_{\alpha\in J_{\leq k}}$ of a neighborhood of $\cS_{\leq k}$, which is a refinement of $\cV'$. For any $S_\beta\in \cS_{=k+1}$, let $U_{S_\beta, \alpha}=\pi_M((B_\alpha\times I_\alpha)\cap S_\beta)$ for $\alpha\in J_{\leq k}$. Now extend $\{U_{S_\beta, \alpha}\}_{\alpha\in J_{\leq k}}$ to a locally finite open covering  $\{U_{S_\beta, \alpha}\}_{\alpha\in J_\beta}$ ($J_\beta$ is an index set containing $J_{\leq k}$) of $S_\beta\simeq \pi_M(S_\beta)$, such that each of $\{U_{S_\beta,\alpha}\}_{\alpha\in J_\beta-J_{\leq k}}$ is contained in $U_x$ for some $x\in S_\beta$, cf. (\ref{eq: a good cylinder}) and there is a neighborhood of $\partial S_\beta$ where the newly added open sets don't intersect. Furthermore, we can refine the covering (only change the newly added ones) so that any finite intersection $\bigcap_{i=1}^\ell U_{S_\beta, \alpha_i}$ is either empty or contractible. By induction (which stops at $\cS_{\leq \dim M}$), we get the desired claim.

\end{proof}

\subsection{Tamarkin's convolution}
\label{subsec:tam-conv-2}
For each $s \in \bR$, we will write $T_s$ for the translation-in-the-second-coordinate operator on $M \times \bR$ and $(T^* M) \times \bR$, given by the formula $T_s(x,t) = (x,t+s)$.  Write $T_s \bL$  and $T_s \bF$ for the image of $\bL$ or $\bF$ under $T_s$.  We also abuse notation and write $T_s$ in place of $T_{s,*}$ for the functor $\Shall(M \times \bR) \to \Shall(M \times \bR)$ induced by $T_s$.

For each sheaf $I \in \Shall(\bR)$, Tamarkin studies an endofunctor $T_I:\Shall(M \times \bR) \to \Shall(M \times \bR)$.  It is given by the formula
\begin{equation}
\label{eq:tamarkin-maps}
T_I F := \overrightarrow{T}_! (\proj_2^* I \otimes_{\bk} \overleftarrow{T}^* F)\cong \overrightarrow{T_I}_! \overleftarrow{T_I}^* F
\end{equation}
where $\proj_2$ is the projection $M \times \bR \to \bR$, $\overrightarrow{T},\overleftarrow{T}:M \times \bR \times \bR \to M \times \bR$ (resp. $\overrightarrow{T_I},\overleftarrow{T_I}:M \times \bR \times  I \to M \times \bR$) are given by addition in the second two factors and projection onto the first two factors, respectively.
When $I$ is the skyscraper sheaf at $s$ then $T_I = T_s$.  We will also be interested in the functors $T_I$, restricted to the subcategory $\Sh_{\bL}(M \times \bR)$, when $I$ is a constant sheaf on a half-open interval or a ray.  

For example, the following figure indicates the boundary conditions of the image of the sheaf of \S\ref{example:wavefront} under $T_{\deltazero}$:
\begin{center}
\begin{tikzpicture}
\draw [thick, domain = -1.0:-0.920420967978104,samples = 100]
plot ({0.16*\x*\x*\x*\x*\x*\x*\x*\x*\x-1.19*\x*\x*\x*\x*\x*\x*\x+5.09*\x*\x*\x*\x*\x-10.32*\x*\x*\x+8.08*\x},{-0.102*\x*\x*\x*\x*\x*\x*\x*\x + 0.606*\x*\x*\x*\x*\x*\x -2.411*\x*\x*\x*\x + 2.64*\x*\x +1.023*ln(100-81*\x*\x)});

\draw [thick, dashed, domain = -0.920420967978104: -0.5928782718649503,samples = 100]
plot ({0.16*\x*\x*\x*\x*\x*\x*\x*\x*\x-1.19*\x*\x*\x*\x*\x*\x*\x+5.09*\x*\x*\x*\x*\x-10.32*\x*\x*\x+8.08*\x},{-0.102*\x*\x*\x*\x*\x*\x*\x*\x + 0.606*\x*\x*\x*\x*\x*\x -2.411*\x*\x*\x*\x + 2.64*\x*\x +1.023*ln(100-81*\x*\x)});
\draw [thick, domain = -0.5928782718649503:0.5928782718649503,samples = 100]
plot ({0.16*\x*\x*\x*\x*\x*\x*\x*\x*\x-1.19*\x*\x*\x*\x*\x*\x*\x+5.09*\x*\x*\x*\x*\x-10.32*\x*\x*\x+8.08*\x},{-0.102*\x*\x*\x*\x*\x*\x*\x*\x + 0.606*\x*\x*\x*\x*\x*\x -2.411*\x*\x*\x*\x + 2.64*\x*\x +1.023*ln(100-81*\x*\x)});
\draw [thick, dashed, domain = 0.5928782718649503:0.920420967978104,samples = 100]
plot ({0.16*\x*\x*\x*\x*\x*\x*\x*\x*\x-1.19*\x*\x*\x*\x*\x*\x*\x+5.09*\x*\x*\x*\x*\x-10.32*\x*\x*\x+8.08*\x},{-0.102*\x*\x*\x*\x*\x*\x*\x*\x + 0.606*\x*\x*\x*\x*\x*\x -2.411*\x*\x*\x*\x + 2.64*\x*\x +1.023*ln(100-81*\x*\x)});
\draw [thick, domain = 0.920420967978104:1,samples = 100]
plot ({0.16*\x*\x*\x*\x*\x*\x*\x*\x*\x-1.19*\x*\x*\x*\x*\x*\x*\x+5.09*\x*\x*\x*\x*\x-10.32*\x*\x*\x+8.08*\x},{-0.102*\x*\x*\x*\x*\x*\x*\x*\x + 0.606*\x*\x*\x*\x*\x*\x -2.411*\x*\x*\x*\x + 2.64*\x*\x +1.023*ln(100-81*\x*\x)});

\draw [thick, dashed, domain = -0.9375922925318348: -0.5928782718649503,samples = 100]
plot ({0.16*\x*\x*\x*\x*\x*\x*\x*\x*\x-1.19*\x*\x*\x*\x*\x*\x*\x+5.09*\x*\x*\x*\x*\x-10.32*\x*\x*\x+8.08*\x},{-0.102*\x*\x*\x*\x*\x*\x*\x*\x + 0.606*\x*\x*\x*\x*\x*\x -2.411*\x*\x*\x*\x + 2.64*\x*\x +1.023*ln(100-81*\x*\x)-.5});

\draw [thick, domain = -0.5928782718649503: -0.3004604429134293,samples = 100]
plot ({0.16*\x*\x*\x*\x*\x*\x*\x*\x*\x-1.19*\x*\x*\x*\x*\x*\x*\x+5.09*\x*\x*\x*\x*\x-10.32*\x*\x*\x+8.08*\x},{-0.102*\x*\x*\x*\x*\x*\x*\x*\x + 0.606*\x*\x*\x*\x*\x*\x -2.411*\x*\x*\x*\x + 2.64*\x*\x +1.023*ln(100-81*\x*\x)-.5});

\draw [thick, dashed, domain = -0.3004604429134293:0.3004604429134293,samples = 100]
plot ({0.16*\x*\x*\x*\x*\x*\x*\x*\x*\x-1.19*\x*\x*\x*\x*\x*\x*\x+5.09*\x*\x*\x*\x*\x-10.32*\x*\x*\x+8.08*\x},{-0.102*\x*\x*\x*\x*\x*\x*\x*\x + 0.606*\x*\x*\x*\x*\x*\x -2.411*\x*\x*\x*\x + 2.64*\x*\x +1.023*ln(100-81*\x*\x)-.5});

\draw [thick, domain = 0.3004604429134293:0.5928782718649503,samples = 100]
plot ({0.16*\x*\x*\x*\x*\x*\x*\x*\x*\x-1.19*\x*\x*\x*\x*\x*\x*\x+5.09*\x*\x*\x*\x*\x-10.32*\x*\x*\x+8.08*\x},{-0.102*\x*\x*\x*\x*\x*\x*\x*\x + 0.606*\x*\x*\x*\x*\x*\x -2.411*\x*\x*\x*\x + 2.64*\x*\x +1.023*ln(100-81*\x*\x)-.5});

\draw [thick, dashed, domain = 0.5928782718649503:0.9375922925318348,samples = 100]
plot ({0.16*\x*\x*\x*\x*\x*\x*\x*\x*\x-1.19*\x*\x*\x*\x*\x*\x*\x+5.09*\x*\x*\x*\x*\x-10.32*\x*\x*\x+8.08*\x},{-0.102*\x*\x*\x*\x*\x*\x*\x*\x + 0.606*\x*\x*\x*\x*\x*\x -2.411*\x*\x*\x*\x + 2.64*\x*\x +1.023*ln(100-81*\x*\x)-.5});

\end{tikzpicture}
\end{center}

\subsection{Tamarkin convolution on $\Sh_{\bL}(M \times \bR)$}
The exact triangles
\begin{eqnarray*}
\bk_{\inftyzero} \to \bk_{\{0\}} \to \Sigma \bk_{(-\infty,0)} \to \\
\bk_{\deltazero} \to \bk_{\inftyzero} \to \bk_{\inftydelta} \to 
\end{eqnarray*}
of sheaves on $\bR$ induce exact triangles of endofunctors
\begin{eqnarray}
\label{eq:ToneT}
T_{\inftyzero} \to 1 \to \Sigma \circ T_{(-\infty,0)} \to  \\
\label{eq:TTT}
T_{\deltazero} \to T_{\inftyzero} \to T_{\inftydelta} \to
\end{eqnarray}
The subcategory $\Sh_{\bL}(M \times \bR)$ is stable by the functor in \eqref{eq:ToneT}, and the map $1 \to \Sigma \circ T_{(-\infty,0)}$ is idempotent, and corresponds to the localization by the subcategory $\Sh_{\bL}^0(M \times \bR)$ (whose quotient is $\Loc(M \times \bR)$).  There is a projector from $\Sh_{\bL}(M\times \bR)$ to the right orthogonal complement of $\Loc(M\times \bR)$, given by $F\mapsto (\overrightarrow{T}_{[0,\infty)})_* (\overleftarrow{T}_{[0,\infty)})^! F$. 
Similar to \eqref{eq:colocal}, the composite
\[
\Sh_{\bL}^0(M \times \bR) \subset \Sh_{\bL}(M \times \bR) \to \Sh_{\bL}(M \times \bR)/\Loc(M \times \bR)
\]
is an equivalence, although the image of the right adjoint is the image of $(\overrightarrow{T}_{[0,\infty)})_* (\overleftarrow{T}_{[0,\infty)})^!$.

On $\Sh_{\bL}^0(M \times \bR)$, the functor $T_{\inftyzero}$ acts as the identity, and $T_{\inftydelta}$ acts as $T_{-\delta}$.  Thus the second arrow in \eqref{eq:TTT} induces a natural map 
\begin{equation}
\label{eq:FTF}
F \to T_{-\delta} F
\end{equation}
whenever $F$ is $\Sh^0_{\bL}(M \times \bR)$.  As $T_{-\delta} F$ lies in $\Sh_{T_{-\delta} \bL}^0(M \times \bR)$, the homotopy fiber $T_{\deltazero} F$ lies in $\Sh^0_{\bL \cup T_{-\delta} \bL}(M \times \bR)$ --- we study these categories in the next section.

\subsection{The total Tamarkin convolution}
\label{subsec:total-Tam}
From now on, we assume $L$ satisfies Assumption \ref{assump: lel}.
In this section we construct, for $\delta$ and $\delta'$ sufficiently small, a canonical equivalence
\begin{equation}
\label{eq:imm-small-delta}
\Sh^0_{\bL \cup T_{-\delta} \bL}(M \times \bR) \simeq\Sh^0_{\bL \cup T_{-\delta'} \bL}(M \times \bR). 
\end{equation}
The equivalence is an application of GKS as in \ref{subsec:GKS}, and a ``stronger result" showing in Proposition \ref{prop:shift-down} below, has an interpretation in terms of the invariance of $\Hom$ between sheaves under ``a (small) positive wrapping" by the Hamiltonian $-\tau$ (on the second copy of $\bL$). 
In fact, when $L \to T^* M$ is an embedding, \eqref{eq:imm-small-delta} holds for all $\delta,\delta'$.

Let $\cR \subset (0,\infty) \times M \times \bR \times \bR$ be the set of quadruples $(\delta,m,t_1,t_2)$ such that $-\delta \leq t_2 < 0$.  Define a correspondence
\[
M \times \bR \xleftarrow{\Ttotleft} \cR \xrightarrow{\Ttotright} (0,\infty) \times M \times \bR
\]
where $\Ttotleft(\delta,m,t_1,t_2) := (m,t_1)$ and $\Ttotright(\delta,m,t_1,t_2) := (\delta,m,t_1+t_2)$.  Then we have a continuous functor
\[
\Ttot = \left(\Ttotright\right)_! \left(\Ttotleft\right)^*:\Shall(M\times\bR) \to \Shall((0,\infty) \times M \times \bR)).
\]
It follows from the base-change formula that for each $\delta \in (0,\infty)$, the composite
\[
\Shall(M \times \bR) \xrightarrow{\Ttot} \Shall((0,\infty) \times M \times \bR) \to \Shall(M \times \bR)
\]
where the second arrow is the restriction to $\{\delta\} \times M \times \bR$, is isomorphic to $T_{\deltazero}$.  If $F \in \Sh^0_{\bL}(M \times \bR)$, then $\Ttot F$ has singular support in the suspension of the family of Legendrians $\bL\cup T_{-s}\bL, s\in (0,\infty)$ that we will denote by $\Ttot\bL \subset T^*((0,\infty) \times M) \times \bR$. Under the assumption that $L$ is embedded, we will construct \eqref{eq:imm-small-delta} by proving that for every $\delta > 0$, the restriction functor
\begin{equation}
\label{eq:autod}
\Sh^0_{\Ttot \bL}((0,\infty) \times M \times \bR) \to \Sh^0_{\bL \cup T_{-\delta} \bL}(M \times \bR)
\end{equation}
is an equivalence.  If $L$ is immersed, then for some $\epsilon > 0$, the same argument shows $\Sh^0_{\Ttot \bL}((0,\epsilon) \times M \times \bR) \to \Sh^0_{\bL \cup T_{-\delta} \bL}(M \times \bR)$ is an equivalence for every $\delta < \epsilon$, proving \eqref{eq:imm-small-delta}.

The two copies of $(0,\infty) \times \bL$ in $\Ttot \bL$ are disjoint from each other in $T^*((0,\infty) \times M) \times \bR$ (and their closures are disjoint in $T^*([0,\infty) \times M) \times \bR$ as well), with one copy the suspension of the constant family $\bL, s\in (0,\infty)$ and the other the suspension of the family $T_{-s}\bL, s\in (0,\infty)$.  If $L \to T^* M$ is an embedding, they are unlinked.  That is, for any $N> 0$, there is a homogeneous Hamiltonian isotopy (in the sense of \S\ref{subsec:GKS}) $h:T^{*,\circ}(M \times \bR) \times (0,\infty) \to T^{*,\circ}(M \times \bR)$ whose graph convolved with $\bL \cup T_{-N} \bL$ is $\Ttot \bL$.

To construct $h$, first let $H:T^{*,\circ}(M \times \bR) \to \bR$ be the function $(x,t;\xi,\tau) \mapsto -\tau$.  At time $s$, the Hamiltonian flow $\varphi_H^s$ is the shifting operator $T_{-s}$.  Choose a homogeneous bump function $b_{\epsilon}:T^{*,\circ}(M \times \bR) \to \bR$ smoothly depending on $\epsilon \in \bR_{>0}$ such that $b_{\epsilon} = 1$ in an open neighborhood of $\bigcup_{s > \epsilon} \mathrm{Cone}(T_{-s} \bL)$ and $\mathrm{supp}(b_{\epsilon}) \cap \Cone(\bL) = \varnothing$.  Since $\XY$ is an embedding, for any $N > 0$ the Hamiltonian flow $\varphi_{H_{\epsilon}}^{s - N}$ of the function $H_{\epsilon} := b_\epsilon \cdot H$ takes $\bL \cup T_{-N} \bL$ to $\bL \cup T_{-s} \bL$ for any $s > \epsilon$.  Then for $h$ we can take $\{\varphi^{s - N}_{H_{\frac{1}{2} s}}\}_{s \in (0,\infty)}$.

To prove \eqref{eq:autod}, we want to apply \eqref{eq:312} to $h$ --- but we first have to modify $h$ to be horizontally compactly supported over any compact subinterval $[a,b]$ of the time interval $(0,\infty)$  (i.e. the projection of $\mathrm{supp}(h)$ on $M\times \bR\times[a,b]$ is compact) . We will construct an isotopy $h^K$ for each $K\ll 0$ over the time interval $(0,|K|)$, and we will take $K\rightarrow -\infty$.  
Explicitly, for any $K \ll 0$ such that $\eta(4K)<\delta$ and $\eta(K)<\epsilon$, where $\eta, \delta$ are as in Lemma \ref{Lag ends} and Assumption \ref{assump: lel}, choose an increasing bump function $c_K:\bR \to \bR$ such that $\mathrm{supp}(c_K) \subset (4K,\infty)$ and $c_K\vert_{(K+1,\infty)} = 1$.  Let $H^K_{\frac{1}{2} s} = (c_K \circ t) \cdot H_{\frac{1}{2} s}$.  The bump functions $b_{\epsilon}$ and $c_{K}$, can be chosen so that the Hamiltonian isotopy
\[
h^K:=\{\varphi_{H^{K}_{\frac{1}{2}s}}^{s-\delta}\}_{s\in (0,|K|)}
\]
is horizontally compactly supported over any compact subinterval of $(0,|K|)$, 
and for all $s\in (0,|K|)$
\begin{align*}
&\XZ(\varphi_{H^{K-s}_{\frac{1}{2}s}}^{s-\delta}(T_{-\delta}\bL))\cap M\times (4K, 2K+1)\\
=&\bigcup\limits_{|2K+1|<r<|4K|}\XZ(\Lambda_{{\eta(-r)+s(r)}})\times \{-r\},
\end{align*}
for some function $0\leq s(r)<\epsilon$ and $\Lambda_u$ is as in Lemma  \ref{Lag ends}. 

By Assumption \ref{assump: lel}, we have compatible natural equivalences $\Sh_{\bL\cup T_{-\delta}\bL}(M\times (K_1, K_2))\simeq \Sh_{\Lambda\cup \Lambda_\epsilon}(M)$ for $-\infty\leq K_1<K_2\ll 0$ and $0<\epsilon\ll \delta$. These give a natural equivalence $\Sh_{\bL\cup T_{-\delta}\bL}(M\times \bR)\simeq \Sh_{\bL\cup T_{-\delta}\bL}(M\times (K,\infty))$ for $K\ll 0$. Let $h^K(\bL\cup T_{-\delta}\bL)$ be the image of $\bL\cup T_{-\delta}\bL$ under the Hamiltonian isotopy $h^K$.  In the same way, we have compatible equivalences $\Sh_{h^K(\bL\cup T_{-\delta}\bL)}((0, -\frac{1}{2}K)\times M\times\bR)\simeq \Sh_{\Ttot\bL}((0,-\frac{1}{2}K)\times M\times\bR)$ for $K\ll 0$. Taking $K\rightarrow -\infty$ and applying \eqref{eq:312}, 
we have the desired equivalence $\Sh_{\Ttot \bL}((0,\infty) \times M \times \bR) \to \Sh_{\bL \cup T_{-\delta} \bL}(M \times \bR)$.

\begin{prop}
\label{prop:shift-down}
Suppose $F,G \in \Sh_{\bL}^0(M \times \bR)$, and let $G \to T_{-\delta} G$ be the map \eqref{eq:FTF}.
\begin{enumerate}
\item For $\delta > 0$ sufficiently small, the map $\Hom(F,G) \to \Hom(F,T_{-\delta} G)$ is an isomorphism.
\item If $L \to T^* M$ is an embedding, then for all $\delta > 0$, the map $\Hom(F,G) \to \Hom(F,T_{-\delta} G)$ is an isomorphism and $\Hom(T_{-\delta} F,G) = 0$.
\end{enumerate}
\end{prop}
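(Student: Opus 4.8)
The plan is to reduce both isomorphism assertions to a single vanishing and then prove it by a local analysis over good cylinders. By \eqref{eq:TTT} and the fact that $T_{\inftyzero}$ and $T_{\inftydelta}$ act on $\Sh_{\bL}^0(M\times\bR)$ as $\mathrm{id}$ and $T_{-\delta}$, there is a cofiber sequence $T_{\deltazero}G\to G\to T_{-\delta}G$ with $T_{\deltazero}G\in\Sh_{\bL\cup T_{-\delta}\bL}^0(M\times\bR)$ whose second map is \eqref{eq:FTF}; applying $\Hom(F,-)$ shows that $\Hom(F,G)\to\Hom(F,T_{-\delta}G)$ is an equivalence exactly when $\Hom(F,T_{\deltazero}G)=0$. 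So it suffices to prove this vanishing, and for part (2) also $\Hom(T_{-\delta}F,G)=0$. For part (2) I would first reduce to small $\delta$: when $L\to T^*M$ is an embedding, \eqref{eq:imm-small-delta} gives a canonical equivalence $\Sh_{\bL\cup T_{-\delta}\bL}^0(M\times\bR)\cong\Sh_{\bL\cup T_{-\delta'}\bL}^0(M\times\bR)$ intertwining the two embeddings $T_{\deltazero},T_{\delta'0}$ of $\Sh_{\bL}^0(M\times\bR)$ from \eqref{eq:3103}; since the homogeneous Hamiltonian isotopy realizing it is supported away from $\Cone(\bL)$, it also carries $T_{-\delta}F$ to $T_{-\delta'}F$, so $\Hom(F,T_{\deltazero}G)$ and $\Hom(T_{-\delta}F,G)$ are independent of $\delta>0$ and it is enough to treat $\delta$ small.

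For $\delta$ small one has $\bL\cap T_{-\delta}\bL=\varnothing$: on the compact part of $\bF$ this is compactness, and near contact infinity it is exactly the end-separation of Lemma \ref{Lag ends}(b). I would fix a good cylindrical \v{C}ech cover of $M\times\bR$ as in \S\ref{subsec:good-cyl-cover}, refined so that $\delta$ is small relative to each chart. Since $\Gamma(U,\shHom(F,T_{\deltazero}G))=\Hom_{\Shall(U)}(F|_U,(T_{\deltazero}G)|_U)$ and good cylinders can be taken arbitrarily small about any point (\S\ref{subsec:good-cylinders}), it is enough to show that this group, and likewise $\Hom_{\Shall(U)}(T_{-\delta}F|_U,G|_U)$, vanishes for each good cylinder $U$ of the cover: the corresponding $\shHom$-sheaves then have vanishing stalks, hence vanish, and $R\Gamma(M\times\bR,-)$ gives the global statement. (One works over a slightly larger good cylinder on which $(T_{\deltazero}G)|_U$ is determined, and restricts.)

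Over such a $U=B\times I$, with $\delta$ small relative to $U$, $U$ is also a good cylinder for $\bL\cup T_{-\delta}\bL$, with branches $\{\Omega_i\}\sqcup\{T_{-\delta}\Omega_i\}$. After shrinking $B$ so that the $E^*$-projections of the distinct $\Omega_i$ lie in disjoint balls, the Proposition on multiple branches (\S\ref{subsec:multi-branch}) and its Fourier-transform argument give a splitting over these branches and the vanishing of all cross-branch $\Hom$-groups. Together with the semiorthogonal decomposition $\Loc(U)\to\Sh_{\Omega_i}(U)\xrightarrow{\mu}\Mod(\bk)$ of the Branch lemma \eqref{eq:branch-lemma} --- and the vanishing $R\Gamma(U,(T_{\deltazero}G)|_U)=0$, which holds because $(T_{\deltazero}G)|_U=\mathrm{fib}(G|_U\to (T_{-\delta}G)|_U)$ and this map is an equivalence on $R\Gamma(U,-)$ since $T_{-\delta}\bF$ lies below $\bF$ in the contractible $U$ --- this reduces the claim to a single branch. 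There a contact transformation (\S\ref{subsec:contact-transformations}) resolving the wavefront singularity of $\Omega_i$, which can be chosen to act only in the $M$-cotangent directions and hence to commute with $T_{\deltazero}$, brings $\Omega_i$ to the conormal of a smooth graph $\{t=\phi(x)\}$; in that model the unit of $\Sh_{\Omega_i}^0(U)$ is $\bk_{\{t\le\phi\}}$, $T_{\deltazero}$ of it is $\bk_{\{\phi-\delta<t\le\phi\}}$, and (using that $\Hom(-,H)$ turns colimits into limits, so the case of a general object follows from that of the unit) one checks directly that $\Hom(\bk_{\{t\le\phi\}},\bk_{\{\phi-\delta<t\le\phi\}})=0$ and $\Hom(\bk_{\{t\le\phi-\delta\}},\bk_{\{t\le\phi\}})=0$.

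The step I expect to be the main obstacle is organizational rather than conceptual: one must check that the branch splitting of \S\ref{subsec:multi-branch} and the contact transformations of \S\ref{subsec:contact-transformations} are genuinely compatible with the translation functors $T_{\deltazero}$ and $T_{-\delta}$, and that the single-branch vanishings assemble into the global statement via the cosheaf/descent formalism of \S\ref{subsec:cosheaf-perspective}--\S\ref{subsec:descent} and the cofinality of good cylinders (in particular handling that $F|_U$ need not itself lie in $\Sh^0$). The one essentially geometric input is $\bL\cap T_{-\delta}\bL=\varnothing$ for $\delta$ small, which at the ends of the noncompact $\bL$ comes from Lemma \ref{Lag ends} rather than from compactness --- and is also the reason part (1) cannot be promoted to all $\delta$ when $L$ is only immersed.
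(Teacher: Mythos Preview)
Your route is genuinely different from the paper's and, while plausible, is considerably more laborious and leaves real work at exactly the places you flag as ``organizational.''  The paper's proof of (1) is two lines and uses no local analysis: from \S\ref{subsec:total-Tam} one has that $\Hom(F,T_{-\delta_1}G)\to\Hom(F,T_{-\delta_2}G)$ is an isomorphism for all sufficiently small $\delta_1<\delta_2$, hence $\Hom(F,\varprojlim_{\delta>0}T_{-\delta}G)\to\Hom(F,T_{-\delta}G)$ is an isomorphism; one then observes directly that $G\to\varprojlim_{\delta>0}T_{-\delta}G$, induced by $\bk_{\{0\}}\xrightarrow{\sim}\varprojlim\Sigma\bk_{\inftydelta}$, is an isomorphism for $G\in\Sh^0_{\bL}$.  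For the vanishing $\Hom(T_{-\delta}F,G)=0$ in (2) the paper goes to $\delta\gg 0$ rather than $\delta$ small: a horizontally compactly supported homogeneous Hamiltonian straightens $\bL$ into $\Lambda\times\bR$ on a large window $M\times(K,C)$ while leaving $T_{-\delta}\bL$ alone, after which $-dt\notin\SS(\shHom(T_{-\delta}F,G))$ by \cite[Prop.~5.4.14]{KS}, and the microlocal Morse lemma finishes.

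Your cofiber-sequence reduction to $\Hom(F,T_{\deltazero}G)=0$ is fine, and the single-branch model computation is correct.  But three of your intermediate steps need more than bookkeeping.  First, the SOD you cite from \eqref{eq:branch-lemma} puts the non-$\Loc$ part of $F|_U$ among sheaves vanishing in the \emph{bottom} region, while your generator $\bk_{\{t\le\phi\}}$ vanishes in the \emph{top} region; you really want the other SOD \eqref{eq:colocal}.  Second, \S\ref{subsec:contact-transformations} does not give contact transformations commuting with $T_{\deltazero}$; to get one you must argue separately that for $B$ small enough the underlying Lagrangian $\Omega\subset T^*B$ is graphical, so that a translation-equivariant Hamiltonian in $T^*B$ does the straightening.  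Third, the Proposition of \S\ref{subsec:multi-branch} splits $\Sh^0_{\bL|_U}(U)$, not $\Sh_{\bL|_U}(U)$, so your branch-by-branch reduction must first pass to the quotient by $\Loc(U)$ and handle the $\Loc$ summand via your $R\Gamma(U,T_{\deltazero}G|_U)=0$ claim --- which itself requires justification beyond ``$T_{-\delta}\bF$ lies below $\bF$'' once the front is cuspy.  None of this is fatal, but it illustrates why the paper's global limit/Morse argument is preferable.
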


\begin{proof}
By \eqref{eq:imm-small-delta}, whenever $\delta_1 < \delta_2$ is sufficiently small, the map
$
\Hom(F,T_{-\delta_1} G) \to \Hom(F,T_{-\delta_2} G)
$
induced by putting $\delta = \delta_2 - \delta_1$ in  \eqref{eq:FTF} is an isomorphism.  It follows that 
\[
\Hom(F,\varprojlim_{\delta > 0} T_{-\delta} G) \to \Hom(F,T_{-\delta} G)
\]
is an isomorphism for $\delta$ sufficiently small.  But the map $G \to \varprojlim_{\delta > 0} T_{-\delta} G$ is induced by isomorphism $\bk_{(-\infty, 0]}\overset{\sim}{\rightarrow} \varprojlim_{\delta > 0}  \bk_{\inftydelta}$--- this proves (1).

Together with \eqref{eq:imm-small-delta}, (1) implies that the map $\Hom(F,G) \to \Hom(F,T_{-\delta} G)$ is an isomorphism for every $\delta > 0$.  To prove the remaining half of (2), we may assume (again by \eqref{eq:imm-small-delta}) that $\delta \gg 0$.  Let $H = \underline{\Hom}(T_{-\delta} F,G)$ --- where in this presentable setting the internal Hom is formally defined to be the object representing the functor $\Hom(T_{-\delta} F \otimes_{\bk} -, G)$.  Then $H$ is quasi-constructible with respect to a stratification of $M \times \bR$ refining $\bF \cup T_{-\delta} \bF$.
For $\delta \gg C \gg 0, K\ll 0$, 
there is a horizontally compactly supported homogeneous Hamiltonian flow  on $T^{*, \circ}(M \times \bR )$ which carries $(T_{-\delta} \bL \cup \bL) \cap (T^* M \times (K,C))$ to $(T_{-\delta} \bL \cup (\Lambda \times \bR)) \cap ((T^* M) \times (K,C))$.  
Then $-dt$ (where $t$ is the projection $M \times \bR \to \bR$) is not in its singular support by \cite[Prop. 5.4.14]{KS}.  The microlocal Morse lemma (Corollary \ref{cor: MML}) completes the proof.
\end{proof}

\begin{theorem}
\label{thm:312}
Assume $L$ satisfies Assumption \ref{assump: lel}. If $\delta > 0$ is sufficiently small relative to $\bL$, then on $\Sh_{\bL}(M \times \bR)$ the functor $T_{\deltazero}$ is isomorphic to a composite functor
\[
\Sh_{\bL}(M \times \bR) \xrightarrow{\mumon} \Gamma(L,\Brane_L) \xrightarrow{\iota} \Sh_{\bL \cup T_{-\delta} \bL}(M \times \bR)
\]
where $\iota$ is a full embedding.
\end{theorem}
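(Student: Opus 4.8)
The plan is to reduce Theorem~\ref{thm:312} to the local statements \eqref{eq:branch-lemma}, \eqref{eq:3102}, \eqref{eq:3103} and the orthogonality Proposition of \S\ref{subsec:multi-branch}, and then to glue over a good cylindrical cover. Concretely: both $\mumon$ and $T_{\deltazero}$ factor through the quotient $\Sh_{\bL}(M\times\bR)/\Loc(M\times\bR)\cong\Sh^0_{\bL}(M\times\bR)$, on which $T_{\deltazero}$ is the embedding $F\mapsto\mathrm{fib}\big(F\xrightarrow{\eqref{eq:FTF}}T_{-\delta}F\big)$; over each good cylinder this embedding is \eqref{eq:3103}, which by \S\ref{subsec:multi-branch} is the direct sum of the per-branch embeddings \eqref{eq:3102}; and these assemble to the desired $\iota$.

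\textbf{Step 1: reduction to $\Sh^0$.} First I would check that $T_{\deltazero}$ annihilates $\Loc(M\times\bR)$: for $F$ locally constant the map \eqref{eq:FTF} is an equivalence (translation acts trivially on locally constant sheaves), and by the triangle \eqref{eq:TTT} together with the exact triangles preceding it this forces $T_{\deltazero}F=0$; alternatively $T_{\deltazero}\bk_{M\times\bR}=0$ directly, since $\Gamma_c$ of a half-open interval vanishes. Hence $T_{\deltazero}$ factors through $\Sh_{\bL}(M\times\bR)\to\Sh_{\bL}(M\times\bR)/\Loc(M\times\bR)\cong\Sh^0_{\bL}(M\times\bR)$, and on $\Sh^0_{\bL}(M\times\bR)$ it is $F\mapsto\mathrm{fib}(F\to T_{-\delta}F)$, with values in $\Sh^0_{\bL\cup T_{-\delta}\bL}(M\times\bR)\subset\Sh_{\bL\cup T_{-\delta}\bL}(M\times\bR)$. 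Likewise $\mumon$ kills $\Loc(M\times\bR)$ (each microlocal stalk does, by \eqref{eq:branch-lemma}), giving $\mumon'\colon\Sh^0_{\bL}(M\times\bR)\to\Gamma(L,\Brane_L)$. So it suffices to produce a fully faithful $\iota$ with $\iota\circ\mumon'\cong T_{\deltazero}|_{\Sh^0_{\bL}}$.

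\textbf{Step 2: the local model.} Fix a good cylindrical cover $\cV$ (exists and can be taken arbitrarily fine, by \S\ref{subsec:good-cyl-cover}), refined so that \eqref{eq:3102} and \eqref{eq:3103} hold on each $U\in\cV$ for our $\delta$. Over $U\in\cV$ with branches $\Omega_1,\dots,\Omega_k$ of $\bL$, the equivalence \eqref{eq:colocal} identifies $\Brane_L(U,\Omega_\alpha)\cong\Sh^0_{\Omega_\alpha}(U)$; and the Proposition of \S\ref{subsec:multi-branch}, applied both to $\bL$ and --- by the identical non-conic Fourier argument, valid for small $\delta$ since the branches $\Omega_\alpha\cup T_{-\delta}\Omega_\alpha$ of $\bL\cup T_{-\delta}\bL$ still project to pairwise disjoint balls in the dual vector space --- to $\bL\cup T_{-\delta}\bL$, identifies $\Sh^0_{\bL|_U}(U)\cong\bigoplus_\alpha\Sh^0_{\Omega_\alpha}(U)$ and $\Sh^0_{\bL|_U\cup T_{-\delta}\bL|_U}(U)\cong\bigoplus_\alpha\Sh^0_{\Omega_\alpha\cup T_{-\delta}\Omega_\alpha}(U)$. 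Under these identifications the restriction of $T_{\deltazero}$ to $U$, namely \eqref{eq:3103}, becomes $\bigoplus_\alpha$ of the fully faithful per-branch embeddings \eqref{eq:3102} $\Brane_L(U,\Omega_\alpha)\hookrightarrow\Sh_{\Omega_\alpha\cup T_{-\delta}\Omega_\alpha}(U)\subset\Sh_{\bL\cup T_{-\delta}\bL}(U)$, each with right adjoint the microlocalization along $\Omega_\alpha$ composed with \eqref{eq:branch-lemma}; and the $U$-component of $\mumon'$ becomes, on the $\alpha$-summand, the canonical equivalence $\Sh^0_{\Omega_\alpha}(U)\cong\Brane_L(U,\Omega_\alpha)$.

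\textbf{Step 3: gluing, and the obstacle.} The per-branch embeddings are compatible with restriction along morphisms of the poset of branched cylinders lying over $\cV$, because microlocal stalks and Tamarkin convolution are local operations. Since $\Gamma(L,\Brane_L)$ is the limit of the $\Brane_L(U,\Omega)$ over this poset (\S\ref{subsec:good-cyl-cover}) and $\Sh_{\bL\cup T_{-\delta}\bL}(M\times\bR)=\varprojlim_{\cV}\Sh_{\bL\cup T_{-\delta}\bL}(U)$ by descent (\S\ref{subsec:descent}), the local embeddings assemble --- using the Fubini identification $\varprojlim_{\cV}\bigoplus_\alpha(\cdot)=\varprojlim_{\text{branched cyl.}}(\cdot)$ along the finite-to-one forgetful map --- to a functor $\iota\colon\Gamma(L,\Brane_L)\to\Sh_{\bL\cup T_{-\delta}\bL}(M\times\bR)$ which is the gluing of the left adjoints \eqref{eq:3102} to the microlocalization functors. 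By construction $\iota\circ\mumon'\cong T_{\deltazero}|_{\Sh^0_{\bL}}$: over each $U$ both send $F$ to $\bigoplus_\alpha T_{\deltazero}\big((F|_U)^0_{\Omega_\alpha}\big)$, using Step~2 and that $T_{\deltazero}$, being colimit-preserving, commutes with the projections onto the summands $\Sh^0_{\Omega_\alpha}(\cdot)$ and carries microsupport along $\Omega_\alpha$ to microsupport along $\Omega_\alpha\cup T_{-\delta}\Omega_\alpha$. Finally $\iota$ is fully faithful: it is left adjoint to the globally defined ``microlocalization along $\bL\subset\bL\cup T_{-\delta}\bL$'' functor $\Sh_{\bL\cup T_{-\delta}\bL}(M\times\bR)\to\Gamma(L,\Brane_L)$, and the unit of this adjunction is an equivalence because it is glued from the units of the local adjunctions \eqref{eq:3102}, each of which is an equivalence. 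The step I expect to be the main obstacle is precisely this gluing: one must reconcile the two limit presentations --- over branched cylinders on the source side, over $\cV$ on the target side --- and this requires care because the defining condition of $\Sh^0$ (vanishing on $M\times(C,\infty)$, $C\gg0$) is not local; it is here that lower exactness of $L$ and the precise geometry of the good cylindrical covers constructed in \S\ref{subsec:good-cyl-cover} enter, and also here that Proposition~\ref{prop:shift-down} (hence \eqref{eq:imm-small-delta} and the meaning of ``$\delta$ sufficiently small relative to $\bL$'') is used to make the construction $\delta$-independent. A subsidiary point needing verification is the orthogonality Proposition for $\bL\cup T_{-\delta}\bL$ invoked in Step~2.
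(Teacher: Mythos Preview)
Your approach is essentially the paper's, but you have misidentified where the real work lies, and one genuine step is missing.

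The missing step is the reduction that makes a \emph{uniform} choice of $\delta$ possible. You write ``refined so that \eqref{eq:3102} and \eqref{eq:3103} hold on each $U\in\cV$ for our $\delta$,'' but the order is wrong: \eqref{eq:3103} holds only for $\delta$ small relative to the given cylinder, so one must fix the cover first and then choose $\delta$; and over an infinite cover the admissible $\delta$'s could shrink to zero. The paper handles this by first using lower exactness to note that restriction $\Sh_{\bL}(M\times\bR)\to\Sh_{\bL}(M\times(K,\infty))$ is an equivalence for $K\ll 0$, and that $\bF\cap(M\times[K,\infty))$ is compact (the primitive is proper and bounded above). One then takes $\cV$ with only finitely many nonempty charts and chooses $\delta$ below all their ``small cushions.'' This reduction is exactly what gives ``$\delta$ sufficiently small relative to $\bL$'' its content, and it is not addressed in your Step~3 discussion.

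Conversely, the obstacles you do flag are largely illusory. The non-locality of the global $\Sh^0$-condition is avoided in the paper by working object-wise with the quotient $\Sh_{\bL}(U)/\Loc(U)$, which is manifestly functorial in $U$ since restriction preserves both terms; the identification with $\Sh^0_{\bL|_U}(U)$ via \eqref{eq:colocal} is invoked only pointwise, not as a natural transformation. And your decomposition $\Sh^0_{\bL\cup T_{-\delta}\bL}(U)\cong\bigoplus_\alpha\Sh^0_{\Omega_\alpha\cup T_{-\delta}\Omega_\alpha}(U)$ is an unnecessary detour: \eqref{eq:3103} already says $T_{\deltazero}\colon\Sh_{\bL}(U)/\Loc(U)\to\Sh_{\bL\cup T_{-\delta}\bL}(U)$ is fully faithful, and the paper simply names its essential image $\cD_{\bL,\delta}(U)$ and checks these are stable under restriction. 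Full faithfulness of $\iota$ is then just the observation that a limit in $\St_\bk$ of full embeddings is a full embedding --- no adjunction argument is needed.
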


\begin{proof}
The fact that the primitive \S\ref{subsec:exact-lags} is proper and has only finitely many critical values implies that for $K \ll 0$, the front projection $\bL \to M \times \bR$ is topologically trivial above $M \times (-\infty,K)$.  In fact it is diffeomorphic to the product of $\Lambda \to M$ with $(-\infty,K)$.  As explained above, for any $\delta_0 > 0$ we may find a $K$ such that for all positive $\delta < \delta_0$, the restriction functors
\begin{equation}
\label{eq:first-reduction}
\begin{array}{c}
\Sh_{\bL \cup T_{-\delta} \bL}(M \times \bR) \to \Sh_{\bL \cup T_{-\delta} \bL}(M \times (K,\infty))
\\ \Sh_{\bL}(M \times \bR) \to \Sh_{\bL}(M \times (K,\infty))
\end{array}
\end{equation}
are equivalences.  The top functor of \eqref{eq:first-reduction} carries the localization by $\Loc(M\times \bR)$, denoted by $\cD_{\bL,\delta}(M \times \bR),$ to $\cD_{\bL,\delta}(M \times (K,\infty))$.  For the rest of the proof let us fix such a $\delta_0$ and $K$.  Thus, if we write $L_{>K}$ for the subset of $L$ on which the primitive takes values $> K$, to prove the theorem it suffices to construct a fully faithful functor (that we will also denote by $\iota$) from $\Gamma(L_{>K},\Brane_L)$ to $\Sh_{\bL \cup T_{-\delta} \bL}(M \times (K,\infty))$, such that the composite
\begin{equation}
\label{eq:first-reduction-prime}
\Sh_{\bL}(M \times (K,\infty)) \xrightarrow{\mumon} \Gamma(L_{>K},\Brane_L) \xrightarrow{\iota}\Sh_{\bL \cup T_{-\delta} \bL}(M \times (K,\infty))
\end{equation}
is isomorphic to $T_{\deltazero}$ (or more precisely to $T_{\deltazero}$ followed by restriction from $M \times (K-\delta,\infty)$ to $M \times (K,\infty)$ --- we will abuse notation and denote this simply by $T_{\deltazero}$).

Define the ``big cushion'' of a good cylinder $B \times (a,b)$ to be the supremum of those $\delta > 0$ such that $B \times (a+\delta,b+\delta)$ is a good cylinder.  When $\delta$ is smaller than the big cushion, the restriction functor
\[
\Sh_{\bL \cup T_{-\delta} \bL}(B \times (a-\delta,b)) \to \Sh_{\bL \cup T_{-\delta} \bL}(B \times (a,b))
\]
is an equivalence.  Because of this, $T_{\deltazero}$ induces a functor
\begin{equation}
\label{eq:Tdz-shl}
\Sh_{\bL}(B \times I) \to \Sh_{\bL \cup T_{-\delta} \bL}(B \times I)
\end{equation}
by first applying the correspondence
\[
B \times (a,b) \leftarrow B \times (a,b) \times [-\delta,0) \to B \times (a - \delta,b) \qquad \text{maps as in \eqref{eq:tamarkin-maps}}
\]
and then restricting to $B \times (a,b)$.  If $\Omega$ is a branch or union of branches of $\bL$ over $B \times I$, then \eqref{eq:Tdz-shl} carries $\Sh_{\Omega}(B \times I)$ to $\Shall(B \times I)$, and annihilates $\Loc(B \times I)$,

Let $\cV$ be a good cylindrical cover of $M \times (K,\infty)$, and suppose that the big cushion of every chart in $\cV$ is larger than $\delta$.  We may regard $\Loc$, $\Sh_{\bL}$, and $\Sh_{\bL \cup T_{-\delta} \bL}$ as objects of $\Fun(\cV^{\op},\St_{\bk})$, by restriction.  \eqref{eq:Tdz-shl} determines a morphism 
\begin{equation}
\label{eq:Tdz-with-V}
\Sh_{\bL} \to \Sh_{\bL \cup T_{-\delta} \bL} \qquad \text{in $\Fun(\cV^{\op},\St_{\bk})$}
\end{equation}
whose image under $\varprojlim_{\cV}:\Fun(\cV^{\op},\St_{\bk}) \to \St_{\bk}$ is $T_{\deltazero}$.  \eqref{eq:Tdz-with-V} kills $\Loc$ object-wise, so by the universal property of localization and \cite[Cor. 5.1.2.3]{higher-topoi}, it determines a commutative triangle $\Delta^2 \to \Fun(\cV^{\op},\St_{\bk})$ of the form
\begin{equation}
\label{eq:Delta2}
\xymatrix{
&  \Sh_{\bL}(B \times I)/\Loc(B \times I) \ar[dr] \\
\Sh_{\bL}(B \times I) \ar[ru] \ar[rr] && \Sh_{\bL \cup T_{-\delta} \bL}(B \times I)
}
\end{equation}
The image of \eqref{eq:Delta2} under $\varprojlim$ is
\[
\Sh_{\bL}(M \times (K,\infty)) \to \Gamma(L,\Brane_L) \overset{\iota}{\to} \Sh_{\bL \cup T_{-\delta} \bL}(M \times (K,\infty))
\]
where we draw only the top two arrows to save space and $\iota$ is the same one as in (\ref{eq:first-reduction-prime}).
We will show that when $\delta$ is sufficiently small, the first arrow in this is isomorphic to $\mumon$ and the second arrow $\iota$ is fully faithful , proving \eqref{eq:first-reduction-prime}.

Let us define the ``small cushion'' of a good cylinder to be the supremum of those $\delta$ such that the functor 
\begin{equation}
\label{eq:multi-branch-2}
\Sh_{\bL}(B \times I)/\Loc(B \times I) \to \Shall(B \times I)
\end{equation}
induced by \eqref{eq:Tdz-shl} is a full embedding. The small cushion of every good cylinder is positive, by \eqref{eq:3103}.  When the small cushion is larger than $\delta$, let $\cD_{\bL,\delta}(B \times I)$ be the essential image of \eqref{eq:multi-branch-2}. 

The functor $T_{(-\delta, 0]}$ is very local  in the sense that for any $F\in \Shall(B\times (a,b))$, $T_{(-\delta, 0]}F|_{B'\times (a',b')}$ only relies on $F|_{B'\times (a', b'+\delta)}$. Thus, if $B' \times I' \subset B \times I$ is a pair of good cylinders both of whose small cushions are larger than $\delta$, the restriction functor carries $\cD_{\bL,\delta}(B \times I)$ to $\cD_{\bL,\delta}(B' \times I')$.

Let $\cV$ be a good cylindrical cover of $M \times (K,\infty)$ \S\ref{subsec:good-cyl-cover}. Let $\cU \subset \Cylb$ be the set of triples $(B,I,\Omega)$ such that $B \times I \in \cV$.  As $\bF \cap [K,\infty)$ is compact, we may suppose that $\cU$ is finite.  Let $\delta>0$ be such that the small cushion of every good cylinder in $\cU$ is larger than $\delta$.  Now $\cD_{\bL,\delta}$ is a functor $\cV^{\op} \to \St_{\bk}$, and the full embeddings $\cD_{\bL,\delta}(B \times I) \subset \Shall(B \times I)$ give a morphism in $\Fun(\cV^{\op},\St_{\bk})$.  In particular they give a full embedding
\begin{equation}
\label{eq:proto-iota}
\varprojlim_{\cV} \cD_{\bL,\delta}(B \times I) \hookrightarrow \varprojlim_{\cV} \Shall(B \times I)
\end{equation}
The codomain of \eqref{eq:proto-iota} is naturally equivalent to $\Shall(M \times (K,\infty))$ \S\ref{subsec:descent}, and under this equivalence the essential image of \eqref{eq:proto-iota} lies in $\Sh_{\bL \cup T_{-\delta} \bL}(M \times (K,\infty))$.

Since $\cU$ is a \v Cech covering of $L_{>K}$, we have
\begin{equation}
\label{eq:second-reduction}
\Gamma(L_{>K}, \Brane_L) \cong \varprojlim_{\cU} \Brane_L(B \times I,\Omega)
\end{equation}
The limit on the right-hand side is equivalent to $\varprojlim_{\cV} \bigoplus_{\Omega} \Brane_L(B \times I,\Omega)$, where the sum is over the branches of $\bL$ over $B \times I$.  The two functors
\begin{equation}
\label{eq:sumomega-pre}
B \times I \mapsto \bigoplus_{\Omega} \Brane_L(B \times I,\Omega), \qquad B \times I \mapsto \Sh_{\bL}(B \times I)/\Loc(B \times I)
\end{equation}
are isomorphic in $\Fun(\cV^{\op},\St_{\bk})$ by the Proposition of \S\ref{subsec:multi-branch}.  The two functors
\begin{equation}
\label{eq:sumomega}
B \times I \mapsto \bigoplus_{\Omega} \Brane_L(B \times I,\Omega), \qquad B \times I \mapsto \cD_{\bL,\delta}(B \times I)
\end{equation}
are isomorphic by \eqref{eq:multi-branch-2}.  Now \eqref{eq:proto-iota} together with \eqref{eq:second-reduction} gives the fully faithful functor $\iota$ in \eqref{eq:first-reduction-prime}.
\end{proof}

\subsection{Nadler-Zaslow without Floer theory}\label{subsec:NZ without Fl}
We are now ready to construct the functor \eqref{eq:one}.  In this section we will prove that in \eqref{eq:two}, when $L \to T^* M$ is an embedding the left-hand arrow ($\mumon$) in \eqref{eq:two} is an equivalence, and that if $\Lambda$ is smooth the right-hand arrow ($\proj_{1,*}$) is fully faithful.  

By Theorem \ref{thm:312}, there is a $\delta > 0$ such that the functor $\mumon$ is isomorphic to the functor $T_{\deltazero}$.  By \S\ref{subsec:total-Tam}, if $\XY$ is an embedding then $T_{\deltazero}$ is isomorphic to $T_{\Czero}$ for any $C$ large or small, so to prove that $\mumon$ is an equivalence it suffices to find any $C$ such that $T_{\Czero}$ is an equivalence.  If $K$ is such that the restriction functor $\Sh_{\bL}^0(M \times \bR) \to \Sh_{\bL}^0(M \times (K,\infty))$ is an equivalence, then for any $C > |K|$ the composite
\[
\Sh_{\bL \cup T_{-C} \bL}^0(M \times \bR) \to \Sh_{\bL}^0(M \times (K,\infty)) \leftarrow \Sh_{\bL}^0(M \times \bR)
\]
is an inverse functor to $T_{\Czero}$.

Now let us prove that $\proj_{1,*}$ is fully faithful.  Fix $F$ and $G$ in $\Sh^0_{\bL}(M \times \bR;\bk)$.  We have an adjoint isomorphism
\[
\Hom(\proj_{1,*} F,\proj_{1,*} G) \cong \Hom(\proj_1^* \proj_{1,*} F,G)
\]
The sheaf $\proj_1^* \proj_{1,*}F$ is isomorphic to $\proj_{1,*} F \boxtimes \bk_{\bR}$, in particular it has singular support in $\Lambda \times \bR$.  As in the proof of Proposition \ref{prop:shift-down}, for $C_1 \gg C_2 \gg 0, K\ll 0$, there is a horizontally compactly supported homogeneous Hamiltonian flow  on $T^{*, \circ}(M \times \bR )$ which carries $(T_{C_1} \bL \cup \bL) \cap (T^* M \times (K,C_2))$ to $((\Lambda \times \bR) \cup \bL) \cap ((T^* M) \times (K,C_2))$, and this Hamiltonian flow induces an isomorphism
\[
\Hom(\proj_1^* \proj_{1,*} F,G) \cong \Hom(T_{C_1} F,G).
\]
But as $L \to T^* M$ is an embedding this is isomorphic to $\Hom(F,G)$ by Proposition \ref{prop:shift-down}(2).

\begin{example*}

If $L = \Gamma_{d\log(m)}$ is a standard Lagrangian, then $\bF$ is just the graph of $\log(m)$.  Every sheaf in $\Sh_{\bL}^0(M \times \bR)$ has the form $j_* F$, where $F$ is locally constant on $\{(x,t) \mid m(x) \geq t\}$.  The functor $\proj_{1,*}$ carries this to a standard sheaf on $M$, just as in the Nadler-Zaslow correspondence.  A more thorough comparison of \eqref{eq:two} and Nadler-Zaslow will appear elsewhere.
\end{example*}

\subsection{Hamiltonian invariance}
\label{subsec:ham-inv}
\newcommand{\bU}{\mathbf{U}}

Let $L_0$ and $L_1$ be two embedded lower exact Lagrangians whose wavefront maps $L_i \to M \times \bR$ are finite-to-one.  Let $\phi_t:T^* M \to T^* M$ be a Hamiltonian flow such that $\phi_1(L_0) = L_1$.  Then the suspension of $\phi_t$ is a Lagrangian concordance $b: \bR \times L \to T^* (\bR \times L)$.  We also assume that $b$ is lower exact and that its wavefront in $\bR \times M \times \bR$ is topologically a product over $\bR \times M \times (-\infty, K)$ for some $K \ll 0$.  For example, if $L_0$ and $L_1$ are the lower exact perturbations of two eventually conic exact Lagrangians $L'_0$ and $L'_1$, then we may find such a $b$ if $L'_0$ and $L'_1$ are Hamiltonian isotopic by a compactly supported isotopy.

Without any further loss of generality, we will also assume that $\phi_t = 1_{T^* M}$ for $t < 0$ and $\phi_t = \phi_1$ for $t > 1$.

By \eqref{eq:312}, the restriction functors
\[
\Sh^0_{\mathbf{b}}(\bR \times M \times \bR) \to \Sh^0_{\bL_i}(M \times \bR)
\]
are equivalences for $i = 0,1$.  The wavefront of $b$ may not be finite-to-one, but by a generic, horizontally compact supported Hamiltonian perturbation, we obtain another lower exact Lagrangian concordance $B:\bR \times L \to T^* (\bR \times L)$ that is finite to one --- we do not require that $B$ is the suspension of Hamiltonian isotopy.  Again by GKS, we have an equivalence
$\Sh^0_{\mathbf{b}}(\bR \times M \times \bR) \cong \Sh^0_{\mathbf{B}}(\bR \times M \times \bR)$ that is the identity outside a compact subset of $\bR \times M \times \bR$ --- we may assume it is the identity outside of $[0,1] \times M \times (K,\infty)$.  Let $\bU_0 = (-\infty,0) \times \bL_0$ and $\bU_1 = (1,\infty) \times \bL_1$.  Then the restriction maps
\begin{equation}
\label{eq:too-long}
\Sh^0_{\bU_0}((-\infty,0) \times M \times \bR) \leftarrow \Sh^0_{\mathbf{B}}(\bR \times M \times \bR) \to \Sh^0_{\bU_1}((1,\infty) \times M \times \bR)
\end{equation}
are equivalences.  By the topological triviality of $\mathbf{B}$ over $\bR \times M \times (-\infty,K)$, those equivalences are intertwined with
\begin{equation}
\Sh_{(-\infty,0) \times \Lambda}((-\infty,0) \times M) \leftarrow \Sh_{\bR \times \Lambda}(\bR \times M) \to \Sh_{(1,\infty) \times \Lambda}((1,\infty) \times M)
\end{equation}
by the pushforwards $\proj_{(-\infty,0) \times M,*}$, $\proj_{\bR \times M,*}$, $\proj_{(1,\infty) \times M,*}$ onto the first two factors.
As $\Brane$ is locally constant and the maps $L_i \to B$ are homotopy equivalences, the restriction functors 
\begin{equation}
\label{eq:too-long-two}
\Gamma((-\infty,0) \times L_0, \Brane_{B}) \leftarrow \Gamma(B, \Brane_B) \to \Gamma((1,\infty) \times L_1,\Brane_B)
\end{equation}
are also equivalences, and $\mumon$ intertwines \eqref{eq:too-long} and \eqref{eq:too-long-two}.

\subsection{Functoriality for pullbacks}
\label{subsec:pullback}
We deduced the Hamiltonian invariance of \eqref{eq:two} from the compatibilty of our constructions with restriction to open subsets of $M$.  There is a similar functoriality property for a more general class of maps.

Let $L, M$ continue to be as in \S\ref{subsec:exact-lags}, and $\bF, \bL$ as in \S\ref{subsec:bold-notation}.  Let $M'$ be another smooth manifold and let $f:M' \to M$ be smooth and proper.  
In that case put $L' = L \times_M M'$ --- it is a smooth manifold whose projection $L' \to f^{-1} T^* M \to T^* M'$ is a lower exact Lagrangian immersion.  The wavefront of $L'$ is the inverse image of $\bF \subset M \times \bR$ along  $f \times 1_{\bR}$ --- let us write this as $\bF' \subset M' \times \bR$ for short. 

When $\Lambda$ is smooth, there is a natural commutative diagram in $\St_{\bk}$
\[
\xymatrix{
\ar[d]  \Gamma(L,\Brane_L) & \ar[l] \Sh_{\bL}^0(M \times \bR) \ar[d]^{(f \times 1_{\bR})^*} \ar[r]^-{\proj_{1,*}} & \Sh_{\Lambda}(M) \ar[d]^{f^*} \\
\Gamma(L',\Brane_{L'}) & \ar[l] \Sh_{\bL'}^0(M' \times \bR) \ar[r]_-{\proj_{1,*}} & \Sh_{\Lambda'}(M')
}
\]
Or equivalently, after Theorem \ref{thm:312}, 
\[
\xymatrix{
\ar[d]  \Sh_{\bL \cup T_{-\delta} \bL}^0(M \times \bR) & \ar[l]_-{T_{\deltazero}} \Sh_{\bL}^0(M \times \bR) \ar[d]^{(f \times 1_{\bR})^*} \ar[r]^-{\proj_{1,*}} & \Sh_{\Lambda}(M) \ar[d]^{f^*} \\
\Sh_{\bL' \cup T_{-\delta} \bL'}^0(M' \times \bR) & \ar[l]^-{T_{\deltazero}} \Sh_{\bL'}^0(M' \times \bR) \ar[r]_-{\proj_{1,*}} & \Sh_{\Lambda'}(M')
}
\]
The commutativity of both squares follows from proper base change --- before applying it to the right square, compactify $M \times \bR$ to $M \times [-\infty,\infty)$ as in \eqref{eq:refine-this}.

\appendix
\section{More on contact transformations and proof of (\ref{eq:L-chiL})}\label{Appendix: contact}

Since the theory of contact transformations in \cite[\S 7.2]{KS} involves some general machineries that we are not trying to develop in the presentable setting here, we give independent proof of several results on contact transformations and analogue of microlocal cut-off lemmas using Tamarkin's category. A main consequence is a self-contained proof of (\ref{eq:L-chiL}). The key ingredients include the use of $\Omega$-lenses from \S\ref{subsec:MTONCS}, the theory of GKS \S\ref{subsec:GKS}, and calculations of stalks of microlocal sheaf categories using the contact Fourier transform. 

\subsection{Notations} \label{Appendix: Notations}

Let $V$ be an $r$-dimensional real vector space. Equip $V$ with a Euclidean inner product, and let $B_\epsilon\subset V$ be a standard ball of radius $\epsilon$ centered at $0$. We write $B$ for $B_1$.  
Let $T^{*, \geq 0}(V\times\bR)=\{(\widetilde{x};\widetilde{\xi}): \widetilde{\xi}(\partial_t)\geq 0\}$, and let $T^{*, <0}(V\times\bR)$ be the complement of $T^{*, \geq 0}(V\times\bR)$ in $T^*(V\times\bR)$. 

\subsubsection{Tamarkin's category}

From now on, we write $\Sh(X;\bk)$ for $\Shall(X;\bk)$. 
Let $\Sh_{\geq 0}(V\times\bR;\bk)$ be the full subcategory of $\Sh(V\times \bR;\bk)$ consisting of $F$ with $\SS(F)\subset T^{*,\geq 0}(V\times\bR)$. Let $\Sh^{<0}(V\times\bR;\bk)$ be the left orthogonal complement of $\Sh_{\geq 0}(V\times \bR;\bk)$, which is the essential image of the endofunctor $*\bk_{(-\infty, 0]}$ on $\Sh(V\times \bR;\bk)$. We refer this and the subsequent variant as Tamarkin's category. One can also take the right orthogonal complement of  $\Sh_{\geq 0}(V\times \bR;\bk)$, where both are canonically identified with the localization $\Sh(V\times \bR;\bk)/\Sh_{\geq 0}(V\times \bR;\bk)$. These definitions apply to any manifold $X$ in place of $V$. 

For any closed subset $C \subset T^{*,<0}(B\times \bR)/\bR_+\cong T^*B\times \bR$, let $\Sh^{<0}_{C}(B\times \bR;\bk)$ be the full subcategory of $\Sh^{<0}(B\times \bR;\bk)$ whose objects $F$ satisfy $\SS(F)\cap T^{*,<0}(B\times \bR)\subset \Cone(C)$. 

\subsubsection{A smooth Legendrian ball $\Omega$}\label{subsubsec: smooth Legendrian ball}
Let $\Omega$ be a smooth Legendrian ball in $T^*B\times \bR$ with a distinguished point $\Xi_0=(0,0;p_0,-dt)\in \Omega$ satisfying that 
\begin{itemize}
\item[(a)] The projection of $\Omega$ to $B\times \bR$ is finite-to-one, and the preimage of $\{0\}\times \bR$ is $\{\Xi_0\}$.

\item[(b)] The front projection $\bF$ of $\Omega$ is properly embedded in $B\times \bR$, and it is compatible with a Whitney stratification $\cS$ of $B\times \bR$ containing a 0-dimensional stratum $\{P_0=\XZ(\Xi_0)\}$. 

\item[(c)] We have $\bF\subset B\times I$ for a precompact interval $I$ in $\bR$. The distance squared function from $P_0$ (with respect to the fixed metric on $V$) has no $\cS$-critical value in $(0,\infty)$. 

\end{itemize}

\subsubsection{Standard contact transformations and kernels}\label{subsubsec: std contact tr}
Recall for any kernel $K\in \Sh(X\times Y;\bk)$, we have $K^{-1}\in \Sh(Y\times X;\bk)$ from (\ref{eq:K-inverse}), with a natural map $K^{-1}\circ K\to \Delta_{X,*}\bk$. Let $\chi: T^{*,<0}(V\times \bR)\to T^{*,<0}(V^*\times \bR)$ be any conic symplectomorphism. We say $\chi$ is \emph{standard} if $\chi$ is the negative conormal bundle (negative in $dt_1$) of the following hypersurface
\begin{align*}
H=\{t_1-t_0-p_1\cdot \bq_1+p_0\cdot \bq_0+\frac{1}{2}A_S(\bq_0)-\bq_0\cdot \bq_1=0\}, 
\end{align*}
where $\bq_0, \bq_1$ are vectors in $V$ and $V^*$, respectively, $p_0, p_1$ are given vectors in $V^*$ and $V$, and $A_S(\bq_0)$ is quadratic form in $\bq_0$. Given a conic Lagrangian $\bL$ containing $(0,0; p_0, -dt_0)$ in $T^{*, <0}(V\times\bR)$, if the above $\chi$ is a contact transformation for $(\bL, (0,0; p_0, -dt_0))$, then we say $\chi$ is a standard contact transformation for $(\bL, (0,0; p_0, -dt_0))$. Let $L$ be the Lagrangian projection of $\bL$, and let $A_{L_{(0;p_0)}}$ be the quadratic form on $\pi_*T_{(0;p_0)}L$ (which is naturally identified as a subspace in $V$) determined by $T_{(0;p_0)}L$. Then $\chi$ is a standard contact transformation for $(\bL, (0,0; p_0, -dt_0))$ if and only if $A_S|_{\pi_*T_{(0;p_0)}L}-A_{L_{(0;p_0)}}$ is nondegenerate.

By an affine transformation, $t_i\mapsto t_i-p_i\cdot \bq_i, i=0,1$, we may reduce to the case that $p_0=p_1=0$. In the following, for simplicity, we will always assume $p_0=p_1=0$. We say $\chi$ is a standard contact transformation for $\Omega$ if $\chi$ is one for every point in $\Cone(\Omega)$. By the assumptions on $\Omega$, if we have $\chi$ a standard contact transformation for $(\Cone(\Omega), \Xi_0)$, then for all $\epsilon>0$ sufficiently small, we have $\chi$ a standard contact transformation for $\Omega\cap (T^*B_\epsilon\times\bR)$. In the case that $A_S=0$, we denote the corresponding $\chi$ by $\chi_\sfF$, and call it the \emph{contact Fourier transform}, for the Lagrangian correspondence $\chi_\sfF$ represents the Fourier transform $T^*V\to T^*V^*$.

\subsubsection{Standard cone}\label{subsubsec: std cone}
Let $W$ be a vector space equipped with an Euclidean inner product. 
For any covector $\widetilde{\xi}_0\in T^{*,<0}(W\times\bR)|_{(0,0)}$ with $|\widetilde{\xi}_0|=1$, 
let $H_{\widetilde{\xi}_0}=\{(w, t)\in W\times \bR: \langle \widetilde{\xi}_0,(w, t)\rangle=-1\}$ and let $c\in H_{\widetilde{\xi}_0}$ be the orthogonal projection of the origin to $H_{\widetilde{\xi}_0}$. 
For any $\kappa>0$, let $\gamma_{\widetilde{\xi}_0, \kappa}$ be the proper closed cone whose intersection with $H_{\widetilde{\xi}_0}$ is the standard closed ball centered at $c$ of radius $1/\kappa$. We say $\gamma_{\widetilde{\xi}_0, \kappa}$ is the \emph{standard cone} associated to $\widetilde{\xi}_0$ with ratio $\kappa$.

\subsection{Basics on semi-orthogonal decompositions}
We collect some basic results on semi-orthogonal decompositions and apply to the relevant sheaf categories. 

Let $\cC\in\St_\bk$. Let $i':\cD\hookrightarrow \cC$ be the inclusion a cocomplete full subcategory (i.e. a full subcategory stable under (small) colimits in $\cC$). Let $j: \cC/\cD\hookrightarrow \cC$ be the inclusion of the right orthogonal complement of $\cD$. Then we have the adjoint functors
\begin{equation*}
\xymatrix{
\cD\ar@/^/[r]^{L'=i'}&\cC\ar@/^/[r]^L \ar@/^/[l]^{R'}&\cC/\cD\ar@/^/[l]^{R=j}
}
\end{equation*}
 and a fiber sequence in ${\bf End}(\cC)$ (the stable $\infty$-category of all exact endofunctors of $\cC$)
\begin{align*}
L'R'\to Id_\cC\to RL,
\end{align*}
where $R'$ (resp. $L$) is the right (resp. left) adjoint of $L'$ (resp. $R$). 

\begin{lemma}\label{lemma: L, L' limits}
If $L'$ preserves limits, then $L$ preserves limits as well. 
\end{lemma}
\begin{proof}
Since $L'$ preserves limits, so is $RL$. Now 
\begin{align*}
RL(\varprojlim_\alpha F_\alpha)\overset{\sim}{\to}\varprojlim_\alpha(RL(F_\alpha))\overset{\sim}{\leftarrow}R(\varprojlim_\alpha L(F_\alpha)). 
\end{align*}
Since $R$ is a fully faithful embedding, we have $L(\varprojlim_\alpha F_\alpha)\overset{\sim}{\to}\varprojlim_\alpha L(F_\alpha)$. 
\end{proof}

Now let $i_1':\cD_1\hookrightarrow \cC$ be the inclusion of a full subcategory that is closed under both colimits and limits in $\cC$. Let $j_1: \cD_1\backslash \cC\hookrightarrow \cC$ be the inclusion of the left orthogonal complement of $\cD_1$. Then we have the adjoint functors
\begin{equation*}
\xymatrix{
\cD_1\backslash \cC\ar@/^/[r]^{L_1=j_1}&\cC\ar@/^/[r]^{L_1'} \ar@/^/[l]^{R_1}&\cD_1\ar@/^/[l]^{R_1'=i_1'}
}
\end{equation*}
 and a fiber sequence of functors 
\begin{align*}
L_1R_1\to Id_\cC\to R_1'L_1',
\end{align*}
where $R_1$ (resp. $L_1'$) is the right (resp. left) adjoint of $L_1$ (resp. $R_1'$). 
In this setting, we also have the inclusion $R=j: \cC/\cD_1\hookrightarrow \cC$ and its left adjoint $L: \cC\to \cC/\cD_1$. 
We immediately have the following (see also \cite[Remark A.8.19, Proposition A. 8.20]{higher-algebra}): 

\begin{lemma}\label{lemma: left, right, orthogonal}
\begin{itemize}
\item[(i)] The composition $R_1\circ R: \cC/\cD_1\to \cD_1\backslash \cC$ gives the canonical equivalence between right and left orthogonal complements of $\cD_1$ in $\cC$. 

\item[(ii)] The inclusion $R=j:\cC/\cD_1\hookrightarrow\cC$ preserves limits and the inclusion $L_1=j_1: \cD_1\backslash \cC\hookrightarrow \cC$ preserves colimits. 

\item[(iii)] The projectors $R_1:\cC\to \cD_1\backslash \cC$ and $L:\cC\to \cC/\cD_1$ preserve both limits and colimits (in particular they are both $\bk$-linear), and they are identified through (i). 
\end{itemize}
\end{lemma}

For any open subset $\cU\subset T^*V\times \bR$, the inclusion $\Sh^{<0}_{(T^*V\times \bR)-\cU}(V\times \bR)\hookrightarrow \Sh^{<0}(V\times\bR)$ preserves both limits and colimits, so we can identify 
$$\Sh^{<0}(V\times\bR)/\Sh^{<0}_{T^*V\times \bR-\cU}(V\times \bR)$$ 
with either the left or the right orthogonal complement of $\Sh^{<0}_{T^*V\times \bR-\cU}(V\times \bR)$. 

\subsection{Notation}\label{notation: Sh<0cU}
For any open subset $\cU\subset T^*V\times \bR$, let $\Sh^{<0, \cU}(V\times \bR)$ denote the left orthogonal complement of $\Sh^{<0}_{T^*V\times \bR-\cU}(V\times \bR)$ in $\Sh^{<0}(V\times\bR)$. For any closed subset $C\subset \cU$, let $\Sh^{<0, \cU}_C(V\times \bR)$ denote the full subcategory of $\Sh^{<0, \cU}(V\times \bR)$ of sheaves $F$ such that $\SS(F)\cap \Cone(\cU)\subset \Cone(C)$. If $C$ is a closed subset of some open $\cU'\supset \cU$, we also write $\Sh^{<0, \cU}_C(V\times \bR)$ for $\Sh^{<0, \cU}_{C\cap \cU}(V\times \bR)$. 
Clearly, $\Sh^{<0, \cU}_C(V\times \bR)$ is closed under both colimits and limits in $\Sh^{<0, \cU}(V\times \bR)$. 

\begin{cor}\label{cor: proj cU_2, cU_1}
For any open $\cU_1\subset \cU_2\subset T^*V\times\bR$, the projector $P_{\cU_2,\cU_1}: \Sh^{<0, \cU_2}(V\times \bR)\to \Sh^{<0, \cU_1}(V\times \bR)$, i.e. the right adjoint of the inclusion $\Sh^{<0, \cU_1}(V\times \bR)\hookrightarrow \Sh^{<0, \cU_2}(V\times \bR)$ preserves both limits and colimits. 
\end{cor}
\begin{proof}
Since $\Sh^{<0, \cU_1}(V\times \bR)$ is identified with $\Sh^{<0, \cU_2}_{\cU_2-\cU_1}(V\times \bR)\backslash \Sh^{<0, \cU_2}(V\times \bR)$, the statement follows from Lemma \ref{lemma: left, right, orthogonal} (iii). 
\end{proof}

\subsection{Equivalences induced by $K_\chi\circ $}
In this subsection, we show several basic results about equivalences on (various versions of) Tamarkin's category induced by $K_\chi\circ$, for a standard conic symplectomorphism $\chi$. 

\begin{lemma}\label{lemma: KchiK-1chi}
Let $\chi:T^{*,<0}(V\times \bR)\to T^{*,<0}(V^*\times \bR)$ be a standard conic symplectomorphism. Let $K_\chi=\bk_H$. Then $K^{-1}_\chi=\mathrm{flip}(\Sigma^{r}\bk_{H})$, and the natural morphism $K_\chi^{-1}\circ K_\chi\to \Delta_{V\times \bR,*}\bk_{V\times\bR}$ fits into the fiber sequence 
\begin{align*}
j_!\cL\to \mathrm{flip}(\Sigma^r\bk_H)\circ \bk_H\to \Delta_{V\times\bR,*}\bk_{V\times\bR},
\end{align*}
for a local system $\cL$ on the open subset $\cU:=(V\times V-\Delta_V)\times \bR\times \bR$ and $j: \cU\hookrightarrow V\times \bR\times V\times \bR$ is the inclusion. In particular, $\SS(j_!\cL)^a\cap (T^{*,<0}(V\times \bR)\times T^{*}(V^*\times \bR))=\varnothing$. 
 
\end{lemma}
\begin{proof}
The first claim about $K^{-1}_\chi=\mathrm{flip}(\Sigma^{r}\bk_{H})$ is straightforward from definition. 

We calculate $K_\chi^{-1}\circ K_\chi$ as follows. Let $(\bq_i, t_i), i=0,1,2$, denote the elements in $V_i$, where $V_i=V$ if $i$ is even and $V_i=V^*$ otherwise. 
First, by a change of variables $t_j\mapsto t_j-\frac{1}{2}A_S(\bq_j), j=0, 2$, we may reduce to the case that $A_S=0$. 
Then 
\begin{align*}
&\mathrm{flip}(\Sigma^r\bk_H)\circ \bk_H=p_{02,!}\Sigma^r\bk_{Z}, \text{ for }\\
&Z=\{(\bq_0, t_0, \bq_1, t_1, \bq_2, t_2): t_1=t_0+\bq_0\cdot\bq_1=t_2+\bq_2\cdot \bq_1\}\\
&\subset V\times\bR\times V^*\times \bR\times V\times \bR. 
\end{align*}
and $p_{02}$ is the projection to the first and last copies of $V\times\bR$. 
By a direct calculation, we see the following:  
\begin{itemize}
\item[(i)] $ \Delta_{V\times\bR}^*p_{02,!}\Sigma^r\bk_{Z}\cong \bk_{V\times\bR}$, and the morphism $K_\chi^{-1}\circ K_\chi\to \Delta_{V\times \bR,*}\bk_{V\times\bR}$ is the standard one $p_{02,!}\Sigma^r\bk_{Z}\to  (\Delta_{V\times\bR})_*\Delta_{V\times\bR}^*(p_{02,!}\Sigma^r\bk_{Z})$.

\item[(ii)] Let  $G=\text{fiber}(K_\chi^{-1}\circ K_\chi\to \Delta_{V\times \bR,*}\bk_{V\times\bR})$, then $G|_{\{\bq_0=\bq_1; t_1,t_2\in \bR\}}\cong 0$. Since $G|_{\{\bq_0\neq \bq_1; t_1,t_2\in \bR\}}$ is a local system $\cL$, we have $G\cong j_!\cL$. 
\end{itemize}
The lemma now follows. 
\end{proof}

\begin{assumption}\label{assumption: chi, K_chi}
Let $\chi:T^{*,<0}(V\times \bR)\to T^{*,<0}(V^*\times \bR)$ be a standard conic symplectomorphism, and let $K_\chi=\bk_H$. 
\end{assumption}

\begin{lemma}\label{lemma: K_chi} 
Under Assumption \ref{assumption: chi, K_chi}, we have 
\begin{align*}
K_\chi \circ : \Sh(V\times \bR;\bk)/\Sh_{\geq 0}(V\times \bR;\bk)\overset{\sim}{\longrightarrow} \Sh(V^*\times\bR;\bk)/\Sh_{\geq 0}(V^*\times \bR;\bk): \circ K_{\chi}^{-1}
\end{align*}
give inverse equivalences of categories. 
\end{lemma}
\begin{proof}
We call any $T^{*,<0}(V\times\bR)$-lense a negative lense. Let $F\in \Sh^{<0}(V\times \bR;\bk)$ be the standard sheaf associated to a regular negative lense from \S \ref{subsubsec: std, Omega-lenses}. Since $\Supp(F)$ is compact, by the singular support estimates from \cite[Proposition 5.4.14, 5.4.4]{KS}, we know that $\SS(K_\chi\circ F)\subset \chi(\SS(F))\cup \zeta_{V^*\times \bR}$. Using Lemma \ref{lemma: KchiK-1chi}, we know that 
\begin{align*}
K_{\chi}^{-1}\circ K_\chi\in \End(\Sh_{SS(F)\cup \zeta_{V\times \bR}}(V\times \bR;\bk)/\Loc(V\times \bR;\bk))
\end{align*}
 is isomorphic to the identify functor through the canonical morphism $K^{-1}_{\chi}\circ K_\chi\to \Delta_{V\times\bR,*}\bk_{V\times\bR}$. Similarly, we have $K_{\chi}\circ K_\chi^{-1}$ isomorphic to the identity functor. Then the lemma follows from the generation property of $\Sh^{<0}(V\times \bR;\bk)$ by such $F$ under colimits. 
\end{proof}

\begin{lemma}\label{lemma: K_chi, <0, C}
Under Assumption \ref{assumption: chi, K_chi}, for any closed subset $C$ in $T^*V\times \bR$, we have 
\begin{align*}
K_\chi \circ : \Sh_{C}^{<0}(V\times \bR;\bk)\overset{\sim}{\longrightarrow} \Sh^{<0}_{\chi(C)}(V^*\times \bR;\bk)
\end{align*}
\end{lemma}
\begin{proof}
Using Lemma \ref{lemma: K_chi}, it suffices show that for any $F\in \Sh_{C}^{<0}(V\times \bR;\bk)$, $\SS(K_\chi\circ F)\cap T^{*, <0}(V^*\times \bR)$ is contained in $\Cone(\chi(C))$. Pick any regular subanalytic $(T^{*,<0}(V\times \bR)-\Cone(\chi(C)))$-lense $(f,\chi,a, \epsilon_0)$ with the standard sheaf $G$ associated to it, then 
\begin{align*}
\Hom_{\Sh^{<0}(V^*\times \bR;\bk)}(G, K_\chi\circ F)\cong \Hom_{\Sh^{<0}(V\times \bR;\bk)}(K_\chi^{-1}\circ G, F). 
\end{align*}
Let $G_t=\Cone(j_{t\epsilon_0,!}\bk_{H_{a, t\epsilon_0}^\dagger}\to j_{\epsilon_0,!}\bk_{H_{a, \epsilon_0}^\dagger})$, $t\in [0,1)$, then we have a compatible system $G_t\to G_{t'}$ for $t\leq  t'$ (induced from a non-negative contact Hamiltonian isotopy whose support is disjoint from $\chi(C)$) and $\varinjlim_{t\to 1} G_t\simeq 0$. We have 
\begin{align*}
&\Hom_{\Sh^{<0}(V^*\times \bR)}(G, K_\chi\circ F)\cong \Hom_{\Sh^{<0}(V\times \bR)}(K_\chi^{-1}\circ G, F)\\
&\overset{\mathrm{GKS}}{\cong} \varprojlim_{t\to 1} \Hom_{\Sh^{<0}(V\times \bR)}(K_\chi^{-1}\circ G_t, F)\cong \Hom_{\Sh^{<0}(V
\times \bR)}(K_\chi^{-1}\circ (\varinjlim_{t\to 1} G_t), F)\\
&=0. 
\end{align*}
The lemma then follows. 
\end{proof}

Using Notation \ref{notation: Sh<0cU}, Lemma \ref{lemma: K_chi, <0, C} is equivalent to saying that for any open subset $\cU\subset T^*V\times \bR$, $K_\chi \circ$ induces an equivalence
\begin{align}\label{eq: Sh<0,cU}
K_\chi\circ: \Sh^{<0, \cU}(V\times \bR)\overset{\sim}{\to} \Sh^{<0, \chi(\cU)}(V^*\times \bR). 
\end{align}
For $\cU=T^*B\times \bR$, we have an obvious equivalence $\Sh^{<0, T^*B\times \bR}(V\times \bR)\simeq \Sh^{<0}(B\times \bR)$.  Let $D^*V$ be the open unit co-disc bundle of $V$.  For any $\delta>0$, let $B_\delta^*\subset V^*$ denote the standard $\delta$-ball centered at $0$ in $V^*$. More generally, using the same proof as Lemma \ref{lemma: K_chi, <0, C}, we get the following.

\begin{prop}
Let $\cU\subset T^*V\times\bR$ be an open subset, and let $C\subset \cU$ be a closed subset, then under Assumption \ref{assumption: chi, K_chi}, we have 
\begin{align}\label{eq: Sh<0,cU}
K_\chi\circ: \Sh_{C}^{<0, \cU}(V\times \bR)\overset{\sim}{\to} \Sh_{\chi(C)}^{<0, \chi(\cU)}(V^*\times \bR). 
\end{align}
\end{prop}

\begin{lemma}\label{lemma: Sh_Omega, <0}
Let $X$ be a (possibly non-compact) smooth manifold. Let $\Omega$ be any properly embedded Legendrian in $T^*X\times \bR$ such that $\bF=\XZ(\Omega)\subset X\times I$ for an open interval $I\subset \bR$ that is bounded from above and $\XZ|_{\Omega}$ is proper. Then there is a natural equivalence 
\begin{align*}
\Sh_{\Cone(\Omega)\cup \zeta}(X\times \bR)/\Loc(X\times \bR)\simeq \Sh_{\Omega}^{<0}(X\times \bR).
\end{align*}
\end{lemma}
\begin{proof}
It suffices to show that for any $F\in \Sh_{\Omega}^{<0}(X\times \bR)$, $\SS(F)\cap T^{*, \geq 0, \circ}(X\times \bR)=\varnothing$. 
Let $U_{+,f}$ be the open subset given by $\{t>f(v)\}$ for a smooth function $f$ on $X$. If $U_{+,f}\cap \bF=\varnothing$, then the natural morphism $F\to j_{U_{+,f},*}j_{U_{+,f}}^*F$ must be the zero morphism, for $j_{U_{+,f},*}j_{U_{+,f}}^*F\in \Sh_{\geq 0}(X\times \bR)$. Hence $j_{U_{+,f}}^*F\cong 0$. Now for any $(v_0, t_0)\in X\times\bR$, choose $f$ such that $(v_0, t_0)\in U_{+,f}$ and $\overline{U_{+,f}\cap \bF}$ is compact (such $f$ clearly exists). 
It is clear that there exists a compactly supported contact Hamiltonian isotopy $\varphi_t$, with support contained in $T^{*, <0}(X\times \bR)/\bR_+$, such that $U_{+,f}\cap \XZ(\varphi_1(\Omega))=\varnothing$. Thus we have $\SS(F)\cap T^{*,\geq 0,\circ}(X\times \bR)|_{(v_0,t_0)}=\varnothing$, and the proof is complete.
\end{proof}

\begin{prop}\label{prop: K_chi, Modk}
Let $\Omega$ be an open Legendrian ball as in \S\ref{subsubsec: smooth Legendrian ball} and assume that  $p_0=0$. Let $\chi$ be a standard contact transformation for $\Omega$. We have 
\begin{align}
\label{eq: K_chi, B, D}&K_\chi\circ : \Sh^{<0}_{\Omega}(B\times \bR)\overset{\sim}{\longrightarrow} \Sh^{<0, D^*V^*\times \bR}_{\chi(\Omega)}(V^*\times \bR). 
\end{align}
and there are natural equivalences 
\begin{align}
\label{eq: chiOmega, Modk}&\Sh^{<0, D^*V^*\times \bR}_{\chi(\Omega)}(V^*\times \bR)\simeq \Mod(\bk)\simeq \Sh^{<0}_{\chi(\Omega)}(B_{\delta}^*\times \bR),
\end{align}
for $\delta>0$ sufficiently small. 
\end{prop}

\begin{remark*}
In the rest of the appendix, we will assume $p_0=0$ for $\Omega$ unless otherwise specified. In the case that $p_0\neq 0$, all results will hold, just with all $B_{\delta}^*$ replaced by $B_{\delta}^*(p_0)=B_\delta^*+p_0:=$the shift of $B_{\delta}^*$ by $p_0$ in $V^*$.  
\end{remark*}

The first equivalence in the proposition follows from (\ref{eq: Sh<0,cU}) and that a standard contact transformation sends $T^*B\times \bR$ to $D^*V^*\times \bR$. In the rest of the appendix, we develop several tools to prove the second part of the proposition. Let $D_\epsilon^*V^*$ be the open $\epsilon$-codisc bundle consisting of covectors of length $<\epsilon$. By assumption, the Lagrangian projection of $\chi(\Omega)$, denoted by $\chi(\Omega)_\pi$, is a (properly embedded) Lagrangian graph in $D^*V^*$ such that the function $|\xi|^2$ on $\chi(\Omega)_\pi$ is proper over $[0,1)$ and is regular except at $(0;0)$.

Let 
\begin{align}\label{eq: Phi', ep, delta, localization}
\Phi': \Sh^{<0, D_\epsilon^*V^*\times\bR}(V^*\times \bR)\to \Sh^{<0, D_{\epsilon}^*B_{\delta}^*\times \bR}(V^*\times \bR) 
\end{align}
be the localization functor, for any $(0, +\infty]\times (0,1)\ni(\delta,\epsilon)$ with $\delta\gg \epsilon$, $\epsilon\ll 1$, it clearly induces the functor 
\begin{align*}
\Phi: \Sh^{<0, D_\epsilon^*V^*\times\bR}_{\chi(\Omega)}(V^*\times \bR)\to \Sh^{<0, D_{\epsilon}^*B_{\delta}^*\times \bR}_{\chi(\Omega)}(V^*\times \bR). 
\end{align*}
Similarly,  by applying $K_{\chi}^{-1}\circ$, we have the following diagram (the square involving only $\widehat{\Phi}'$,  $\widehat{\Phi}$ as vertical functors is commutative):
\begin{equation}\label{diagram: widehatPhi}
\xymatrix{\Sh^{<0}(B_\epsilon\times\bR)\ar[dd]^{\widehat{\Phi}'}&\Sh^{<0}_{\Omega}(B_\epsilon\times\bR)\ar@{_{(}->}[l]_{\iota}\ar[dd]^{\widehat{\Phi}}\\
&&\\
\Sh^{<0, D_\delta^*B_\epsilon\times\bR}(B_\epsilon\times\bR)\ar@/^1pc/[uu]^{L'}\ar@/_1pc/[uu]_{R'}&\Sh^{<0,D_\delta^*B_\epsilon\times\bR}_{\Omega}(B_\epsilon\times\bR)
\ar@/^1pc/[uu]^{L}\ar@/_1pc/[uu]_{R}\ar@{_{(}->}[l]_{\iota_\delta}
}
\end{equation}
where
$\widehat{\Phi}'$ is the localization functor, $\widehat{\Phi}$ is the restriction of $\widehat{\Phi}'$, and $L', R', L, R$ are the respective left and right adjoint of $\widehat{\Phi}'$ and $\widehat{\Phi}$; $L'$ is just the obvious inclusion functor.

\subsection{Results on projectors}\label{subsec: results projectors}
We first prove some basic facts about the functor $\widehat{\Phi}'$ in diagram (\ref{diagram: widehatPhi}). We will work in a slightly more general setting.

Let $W$ be any finite dimensional vector space with a Euclidean inner product.
Consider the \emph{not} necessarily commutative diagram 
\begin{align*}
\xymatrix{
\Sh^{<0}(W\times\bR)\ar[r]^{P_{\epsilon}\ \ \ \ \ \ \ }\ar[d]_{j^!}& \Sh^{<0}_{(T^*W-D_\epsilon^*W)\times\bR}(W\times\bR)\ar[d]^{j^!}\ar@/^/[l]^{i_\epsilon}\\
\Sh^{<0}(B_\delta\times\bR)\ar[r]^{P_{\epsilon, \delta}\ \ \ \ \ }&\Sh^{<0}_{(T^*B_{\delta}-D_\epsilon^*B_\delta)\times\bR}(B_\delta\times\bR)\ar@/^/[l]^{i_{\epsilon,\delta}}
}
\end{align*}
where $i_\epsilon$ (reps. $i_{\epsilon, \delta}$) is the natural full embedding, and $P_\epsilon$ (resp. $P_{\epsilon, \delta}$) is the left adjoint, and $j: B_\delta\times \bR\hookrightarrow W\times\bR$ is the open inclusion. Since $j^!\circ i_\epsilon\cong i_{\epsilon, \delta}\circ j^!$, using adjunction we have a natural transformation 
\begin{align}\label{tau: P_ep,delta to P_ep}
\tau_P: P_{\epsilon, \delta}\circ j^!\to j^!\circ P_\epsilon 
\end{align}
(clearly, $P_{\epsilon, \delta}\circ j^!$ represents the localization functor that is left adjoint to $j_*\circ i_{\epsilon,\delta}$). Similarly, we have 
\begin{align}\label{diagram: rho_epsilon,delta}
\xymatrix{
\Sh^{<0}(W\times\bR)\ar[r]_{\rho_{\epsilon}\ \ \ \ \ \ \ }& \Sh^{<0, D_\epsilon^*W\times\bR}(W\times\bR)\ar@/_/[l]_{i'_\epsilon}\\
\Sh^{<0}(B_\delta\times\bR)\ar[r]_{\rho_{ \epsilon, \delta}\ \ \ \ \ }\ar[u]^{j_!}&\Sh^{<0, D_\epsilon^*B_\delta\times\bR}(B_\delta\times\bR)\ar@/_/[l]_{i'_{\epsilon, \delta}}\ar[u]_{j_!}
},
\end{align}
where $\rho_\epsilon$ (resp. $\rho_{\epsilon, \delta}$) is the right adjoint of the inclusion $i_\epsilon'$ (resp. $i_{\epsilon, \delta}'$) and there is a natural transformation $j_!\circ \rho_{\epsilon, \delta}\to \rho_\epsilon\circ j_!$. Moreover, applying the unit and co-unit of the adjoint pair $(j_!, j^!)$, we get a natural transformation $\tau_\rho: \rho_{\ep, \delta} j^!\to j^!\rho_\ep$. 

\begin{lemma}\label{lemma: fiber seq, rhoepdelta}
The following diagram naturally commutes
\begin{align}\label{lemma: diagram, unit, tau_P}
\xymatrix{ j^! \ar[r]^{\mathrm{unit}\ \ \ }\ar[drr]_{\mathrm{unit}} & i_{\ep,\delta} P_{\ep, \delta} j^! \ar[r]^{\tau_P} & i_{\ep, \delta}j^!P_\ep\ar[d]^{\sim}\\
& &j^! i_\ep P_\ep
}
\end{align}
where the two morphisms marked by $\mathrm{unit}$ are the maps (induced from) the unit map of the respective adjoint pairs, $\tau_P$ is as in (\ref{tau: P_ep,delta to P_ep}), and the vertical isomorphism is the obvious one as above. 

In particular, we get a natural commutative diagram with each row a fiber sequence: 
\begin{align}\label{lemma: diagram fiber seq, rhoepdelta}
\xymatrix{
\rho_{\ep,\delta} j^!\ar[r]\ar[d]_{\tau_\rho}& j^!\ar[r]\ar@{=}[d]& P_{\ep,\delta} j^!\ar[d]^{\tau_P}\\
j^!\rho_{\ep}\ar[r]& j^!\ar[r]& j^!P_{\ep}
}
\end{align}
where for simplicity, we write $P_{\ep,\delta}$ for $i_{\ep, \delta}P_{\ep,\delta}$ and similarly for others. 
\end{lemma}

\begin{proof}
To see that (\ref{lemma: diagram, unit, tau_P}) is commutative, it suffices to observe the following commutative diagram
\begin{align*}
\xymatrix{&i_{\ep, \delta}P_{\ep, \delta}j^!\ar[r]&i_{\ep, \delta}P_{\ep, \delta}j^! i_\ep P_\ep\ar[r]^\sim&i_{\ep, \delta}P_{\ep, \delta} i_{\ep,\delta} j^! P_\ep\ar[r]_\sim^{\ \ \ \counit}& i_{\ep, \delta} j^! P_\ep\ar[d]^{\sim}\\
j^!\ar[ur]\ar[r]&j^!i_\ep P_\ep\ar@{=}@/_1pc/[rrr]\ar[ur]^{\sim}_{\alpha}& i_{\ep, \delta} j^! P_\ep\ar[l]_{\sim}\ar@{=}[urr]\ar[ur]^{\unit} & &j^!i_\ep P_\ep
},\\
\end{align*} 
where (1) the left square has all edges the unit maps for the respective adjoint pairs, and the marked $\counit$ on the right-hand-side means (the map induced from) the co-unit map for the adjoint pair $(i_{\ep,\delta}, P_{\ep, \delta})$; (2) the unit map $\alpha$ is an isomorphism because $j^! i_\ep P_\ep$ has essential image lying in $\Sh^{<0}_{(T^*B_\delta-D_\ep^*B_\delta)\times\bR} (B_\delta\times \bR)$; (3) it is clear that all chambers are naturally commutative. 

The commutative diagram (\ref{lemma: diagram fiber seq, rhoepdelta}) is a direct consequence of the above (and a similar diagram as (\ref{lemma: diagram, unit, tau_P}) with $i_\ep, P_\ep, i_{\ep,\delta},P_{\ep, \delta}$ replaced by $i_\ep', \rho_\ep, i_{\ep,\delta}',\rho_{\ep, \delta}$).

\end{proof}

Let $(x,a)\in B_\delta\times\bR$. Let $C_{x, a, \epsilon}$ be the standard closed cone in $W\times\bR$ with vertex (or cone point) $(x, a)$ consisting of half rays emitting from $(x, a)$ in the directions $(v, -\partial_t)$ for $|v|\leq \epsilon^{-1}$. Equivalently, $C_{x,a,\epsilon}=(x,a)+\gamma_{(0,1),\epsilon}$ in terms of the standard cone $\gamma_{(0,1),\epsilon}$ from \S\ref{subsubsec: std cone} (see Figure \ref{figure: C_x,a,epsilon} below). Let $C'_{x, a, \epsilon}=C_{x, a, \epsilon}-(\{x\}\times (-\infty, a])$. 
\begin{figure}[htbp]
\begin{center}
\begin{tikzpicture}[scale=0.8]
\draw[thick] (0,0) node[above] {$(x,a)$}--(-2,-2);
\draw[thick] (0,0)--(2,-2);
\draw[thick, gray] (0,0)--(-6,-2);
\draw[thick, gray] (0,0)--(6,-2);
\draw[gray, rounded corners=3 pt] (-2.1,-0.7)--(-1.5,-0.5) to [out=40,in=140] (1.5,-0.5)--(2.1,-0.7);
\draw[gray, dashed] (-4.2,-1.4) to [out=15, in=165] (4.2,-1.4); 
\draw[gray, dashed] (-5.4,-1.8) to [out=15, in=165] (5.4,-1.8); 
\fill[gray, opacity=0.5] (-2,-2)--(0,0)--(2,-2);
\end{tikzpicture}
\caption{The filled cone is $C_{(x,a,\epsilon)}$. The outer edges emitting from $(x,a)$ form the boundary of $C_{x,a,\epsilon'}$ for $0<\epsilon'<\epsilon$. The portion of gray curve above $(x,a)$ is part of $\partial H_t^{\epsilon'}$ as a smoothing of $\partial C_{x,a,\epsilon'}$. 
}\label{figure: C_x,a,epsilon}
\end{center}
\end{figure}

\begin{lemma}\label{lemma: rho_delta, P_delta}
Let $(x,a)\in B_\delta\times\bR$, $U=B_\delta\times (a-\epsilon(\delta-|x|), \infty)$, and $j_U: U\hookrightarrow B_\delta\times\bR$ (resp. $j'_U: U\hookrightarrow W\times\bR$) be the open inclusion. Let $G=\bk_{\{x\}\times(-\infty, a]}$, then the following hold. 
\begin{itemize}
\item[(i)]  We have the fiber sequence
\begin{align*}
\rho_\epsilon(G)=\bk_{C_{x, a, \epsilon}}\to G \to \Sigma\bk_{C'_{x, a, \epsilon}}=P_\epsilon(G). 
\end{align*}

\item[(ii)] There are natural isomorphisms
\begin{align*}
&P_{\epsilon, \delta}(G)|_U\overset{\sim}{\to} (P_\epsilon(G))|_U,\\
&\rho_{\epsilon, \delta}(G)|_U\overset{\sim}{\to} \rho_\epsilon (G)|_U. 
\end{align*}

\end{itemize}

\end{lemma}

\begin{proof}
(i) It is clear that 
\begin{align*}
&\SS( \Sigma\bk_{C'_{x, a, \epsilon}})\cap \Cone(D_\epsilon^*W\times\bR)=\varnothing
\end{align*}
It suffices to show that 
\begin{align*}
&\bk_{C_{x, a, \epsilon}}\in \Sh^{<0,D^*_\epsilon W\times\bR}(W\times\bR).
\end{align*}
First, we have $\bk_{C_{x, a, \epsilon}}=\varinjlim_{0<\epsilon'<\epsilon}\bk_{C_{x,a,\epsilon'}}$, so it suffices to show that 
$$\bk_{C_{x,a,\epsilon'}}\in \Sh^{<0,D^*_\epsilon W\times\bR}(W\times\bR), \text{ for all }0<\epsilon'<\epsilon.$$ For each $\bk_{C_{x,a,\epsilon'}}$, we can do a smoothing of the boundary of the cone from the outside as indicated in Figure \ref{figure: C_x,a,epsilon}, so that we get a family $\bk_{H^{\epsilon'}_t}, t\in (0,1)$ satisfying 
\begin{itemize}
\item $H^{\epsilon'}_t$ is the closed region below a smooth hypersurface with the negative conormal vectors of $\partial H^{\epsilon'}_t$ contained in $\Cone(D^*_\epsilon W\times\bR)$;

\item $H^{\epsilon'}_t$ decreases as $t$ decreases, and $\varinjlim_{t\to 0}\bk_{H_t^{\epsilon'}}\cong \bk_{C_{x,a,\epsilon'}}$. 
\end{itemize}
This further reduces the problem to show that $\bk_{H_t^{\epsilon'}}\in \Sh^{<0,D^*_\epsilon W\times\bR}(W\times\bR)$. But this is straightforward, for example, we can realize $\bk_{H_t^{\epsilon'}}$ as the colimit of a family of standard sheaves for the $(D^*_\epsilon W\times\bR)$-lenses whose underling regions are cut out by the dashed smooth hypersurfaces and $\partial H_t^{\epsilon'}$ indicated in the same figure.

(ii) Let $I=(a-\epsilon(\delta-|x|), \infty)$. Note that $a-\epsilon(\delta-|x|)$ is the maximum of the $\bR$-coordinate in $\partial C_{x, a, \epsilon}\cap \partial B_\delta\times\bR$. 
Since 
\begin{align*}
&\SS( (j_{U}')^!\Sigma\bk_{C'_{x, a, \epsilon}})\cap \Cone(D_\epsilon^*B_\delta\times I)=\varnothing,\\
&(j_U')^!\bk_{C_{x, a, \epsilon}}\in \Sh^{<0,D^*_\epsilon B_\delta\times I}(U),
\end{align*}
we have 
\begin{align}\label{eq: G|U, Pepsilon|U}
\rho_{\epsilon,U}(G|_U):=\rho_\epsilon(G)|_U\to G|_U\to P_\epsilon(G)|_{U}=:P_{\epsilon,U}(G|_U)
\end{align}
is the semi-orthogonal decomposition of $G|_U\in \Sh^{<0}(U)$ with respect to $\Sh^{<0,D^*_\epsilon B_\delta\times I}(U)$ and its right orthogonal complement.
This implies that the composition
\begin{align*}
P_{\epsilon,U}(G|_U)\to P_{\epsilon,\delta}(G)|_U\to P_\epsilon(G)|_U
\end{align*}
is an isomorphism. 
By Lemma \ref{lemma: Q, I, cE} below, $P_{\epsilon,U}(G|_U)\to P_{\epsilon,\delta}(G)|_U$ is an isomorphism, hence  $P_{\epsilon,\delta}(G)|_U\overset{\sim}{\to} P_\epsilon(G)|_{U}$ and part (ii) of the lemma follows. 
\end{proof}

\begin{lemma}\label{lemma: Q, I, cE}
Let $Q\subset W$ be an open subset and let $(a,\infty)=I\subset \bR$. The functor $j_{Q\times I}^*$ sends $\Sh^{<0, D_\epsilon^*Q\times\bR}(Q\times \bR)$ to $\Sh^{<0, D_\epsilon^*Q\times I}(Q\times I)$. In particular, we have natural isomorphisms $P_{\ep, U}j_U^!\overset{\sim}{\to} j_U^!P_{\ep, \delta}$ and $\rho_{\ep, U}j_U^!\overset{\sim}{\to} j_U^!\rho_{\ep, \delta}$, under the assumption of Lemma \ref{lemma: rho_delta, P_delta}. 
\end{lemma}

\begin{proof}
Since $\Sh^{<0, D_\epsilon^*Q\times\bR}(Q\times \bR)$ is generated under colimits by the standard sheaves associated to $D_\ep^*Q\times\bR$-lenses in $\Sh(Q\times\bR)$, and the inclusion $\Sh^{<0, D_\epsilon^*Q\times\bR}(Q\times \bR)\hookrightarrow \Sh(Q\times\bR)$ preserves colimits, it suffices to show that $j_{Q\times I}^*$ sends every such standard sheaf to a sheaf in $\Sh^{<0, D_\epsilon^*Q\times I}(Q\times I)$. In the following we choose a slightly different collection of generators of $\Sh^{<0, D_\epsilon^*Q\times\bR}(Q\times \bR)$ under colimits. 

For any covector $(x_0,b_0;\widetilde{\xi}_0)\in D_\epsilon^*Q\times \bR$, let $\gamma^\circ$ be the interior of the standard cone $\gamma:=\gamma_{\widetilde{\xi}, \kappa}$ for some fixed $0<\kappa\ll \epsilon$ and $\widetilde{\xi}$ in a small open neighborhood  $\cV_0$ of $\widetilde{\xi}_0$. 
Let $U_1\subset \overline{U}_1\subset Q\times\bR$ be a small open neighborhood of $(x_0,b_0)$. For any $\delta>0$, let 
\begin{align*}
H_{\geq -\delta}=\{(x,t): \langle\widetilde{\xi},(x-x_0, t-b_0)\rangle\geq -\delta\},
\end{align*}
and let $H_{<-\delta}$ be its complement. We can choose $U_1$, $\delta$ and $\cV_0$, so that  $(U_1+\gamma)\cap H_{\geq -\delta}\subset Q\times \bR$, for all such choices of $\gamma$. Denote $((x,b)+\gamma^\circ)\cap H_{\geq -\delta}$ by $R^\gamma_{(x,b);\delta}$, for $(x, b)\in U_1$. 
Then by \cite[Prop. 5.1.1]{KS} and its proof, 
the collection of $\bk_{R^\gamma_{(x,b);\delta}}$, for such choices of $(x,b;\widetilde{\xi}), \kappa, \delta$, generates $\Sh^{<0, D_\epsilon^*Q\times\bR}(Q\times \bR)$ under colimits. Write $R$ for $R^\gamma_{(x,b);\delta}$.

There are the following cases for $(j_{Q\times I})^* \bk_{R}$:
\begin{itemize}
\item[(1)] If $R\subset (Q\times I)$, then $(j_{Q\times I})^* \bk_{R}=\bk_{R}$;

\item[(2)] If $R\cap (Q\times I)=\varnothing$, then  $(j_{Q\times I})^* \bk_{R}=0$;

\item[(3)] If $\varnothing\neq R\cap (Q\times I)\subsetneq R$, set $R^+:=R\cap (Q\times(a,\infty))$. 
Then we have the situation illustrated in Figure \ref{figure: Rplusminusx,t} (of course the figure doesn't illustrate all possible configurations), and $(j_{Q\times I})^* \bk_{R}= \bk_{R^+}$. Since $R^+$ (as a convex domain) has the non-solid boundary, i.e. $\overline{R^+}-R^+$, consists of piecewise smooth hypersurfaces whose outer conormals are contained in $\Cone(D_\ep^*Q\times I)$ (and its solid boundary is part of the solid boundary of $R$), we know that $\bk_{R^+}$ can be written as a colimit of standard sheaves associated to $D_\ep^*Q\times I$-lenses. Hence $\bk_{R^+}\in  \Sh^{<0, D_\epsilon^*Q\times I}(Q\times I)$. 
\end{itemize}

\begin{figure}[htbp]
\begin{center}
\begin{tikzpicture}
\draw[thick, gray] (-3,0)--(3,0) node[right] {$Q\times\{a\}$};
\draw[thick] (-2.4,0.8)--(2.4,-0.8);
\draw[thick, dashed] (-2.4, 0.8)--(-0.4,-1.2)--(2.4,-0.8);
\draw[dashed, thick] (-0.12,0.05)--(-1.65,0.05);
\draw[thick] (0.12,-0.05)--(-1.55,-0.05);
\fill[gray, opacity=0.5] (-2.4, 0.8)--(-0.12,0.05)--(-1.65,0.05);
\fill[gray, opacity=0.3] (-1.55,-0.05)--(0.12,-0.05)--(2.4,-0.8)--(-0.4,-1.2);
\end{tikzpicture}
\end{center}
\caption{The entire triangle illustrates $R$ in case (3). The upper triangle illustrates $R^+$.}\label{figure: Rplusminusx,t}
\end{figure}

The proof is then complete. 
\end{proof}

\subsection{A ``cut-off" lemma and applications}
We prove a ``cut-off" lemma that will play a similar role as the refined microlocal cut-off lemma in \cite{KS}, after applying the contact Fourier transform. We then deduce a couple of useful corollaries from it. For any open $U$ in a smooth manifold, we write $T^*\overline{U}$ for $T^*M|_{\overline{U}}$. 

\begin{lemma}[``Cut-off" lemma]\label{lemma: Q, limits, colimits}
Let $Q\subset W$ be an open subset and $I=(c,d)\subset \bR$ be an open interval. Let $F\in \Sh^{<0}(W\times\bR)$. Assume $\SS(F)\cap T^{*,<0}(W\times\bR)\subset T^{*}(\overline{Q\times I})$, then $F$ is generated by 
\begin{align}\label{eq: lemma M_x,a}
M_{\{x\}\times(-\infty, a]}=T_{(-\infty,0]}(M_{(x,a)}), (x,a)\in \overline{Q}\times \overline{I}, M\in \Mod(\bk), 
\end{align}
under colimits (resp. limits). 
\end{lemma}

\begin{proof}
In the proof, we assume that $(c,d)$ is a bounded open interval. The case when $c=-\infty$ and/or $d=\infty$ is easier and follows from the same argument. 

Let $\cC=\cC_{\overline{Q}\times \overline{I}}\subset \Sh^{<0}(W\times\bR)$ be the full subcategory that is generated by the sheaves in (\ref{eq: lemma M_x,a}) with $(x,a)\in \overline{Q}\times [c,d]$ under colimits. Let $\rho_{\cC}: \Sh^{<0}(W\times\bR)\to \cC$ be the right adjoint of the inclusion. By the singular support assumption on the sheaf $F$, we have 
\begin{align}\label{eq: lemma Hom M_x, l, u}
\Hom(M_{\{x\}\times (\ell, u]}, F)=0, \forall (\{x\}\times (\ell, u])\cap \overline{Q\times I}=\varnothing, M\in \Mod(\bk), 
\end{align}
where $u\in\bR, \ell\in [-\infty, u)$. 

Conversely, the equation (\ref{eq: lemma Hom M_x, l, u}) for any sheaf $F'\in \Sh^{<0}(W\times\bR)$ (in place of $F$) implies that 
\begin{align}\label{eq: SSF', Q', I'}
\SS(F')\cap T^{*,<0}(W\times\bR)\subset T^*(\overline{Q\times I}).
\end{align}
Indeed, for any $(x_0,a)\not\in \overline{Q\times I}$, any covector $\widetilde{\xi}_0\in T^{*,<0}(W\times\bR)|_{(0,0)}$ with $|\widetilde{\xi}_0|=1$, and $\epsilon>0$ small, let 
\begin{align*}
H'_{\geq -\epsilon}=\{(x, t): \langle \widetilde{\xi}_0,(x-x_0, t-a)\rangle \geq -\epsilon\},\  H'_{-\epsilon}=\{(x, t): \langle \widetilde{\xi}_0, (x-x_0, t-a)\rangle=-\epsilon\}
\end{align*}
Let $\gamma^\circ$ be the interior of the standard cone $\gamma:=\gamma_{\widetilde{\xi}_0, \kappa}$ (as in \S\ref{subsubsec: std cone}) for some $\kappa>0$. Choose any $\epsilon>0$ and $\kappa>0$, so that
\begin{itemize}
\item $(((x_0,a)+\gamma)\cap H_{\geq -\epsilon})\cap \overline{Q\times I}=\varnothing$;

\item The fiber of $\pi_W|_{((x_0,a)+\gamma^\circ)\cap H_{\geq -\epsilon}}\to W$ over $x\in W$ is either $\varnothing$ or $\{x\}\times (\ell, u]$ for some $\ell<u$, where $\pi_W: W\times \bR\to W$ is the obvious projection.
\end{itemize}
Let $i: ((x_0,a)+\gamma^\circ)\cap H_{\geq -\epsilon}\hookrightarrow W\times\bR$ be the inclusion. By base change and using (\ref{eq: lemma Hom M_x, l, u}), we have  $(\pi_W)_*i^!F'= 0$. Therefore, $\Hom(\bk_{((x_0,a)+\gamma^\circ)\cap H_{\geq -\epsilon}}, F')= 0$. By a similar argument as in the proof of \cite[Prop. 5.1.1]{KS}, we see that (\ref{eq: SSF', Q', I'}) holds. 

It is clear that for any $F'\in \cC$, it satisfies (\ref{eq: SSF', Q', I'}). In particular, that holds for $F'=\rho_\cC(F)$. 
Let $G:=\Cone(\rho_{\cC}F\to F)$. Since $G$ is right orthogonal to $\cC$ and $G$ satisfies (\ref{eq: SSF', Q', I'}) with $F'$ replaced by $G$, we see that $\Hom(\bk_{(a,b]}, G)=0$ for any $-\infty\leq a<b< \infty$, hence $G=0$ in $\Sh^{<0}(W\times\bR)$, and $F\in \cC$.

For the ``dual" version about generation under limits, take the obvious equivalence $\Psi: \Sh^{<0}(W\times\bR)\overset{\sim}{\to} \Sh(W\times\bR)/\Sh_{\geq 0}(W\times\bR)$, where the RHS is understood as the right orthogonal complement of $\Sh_{\geq 0}(W\times\bR)$. Then one just needs to run a dual version of the above argument with $\cC$ replaced by the full subcategory of $\Sh(W\times\bR)/\Sh_{\geq 0}(W\times\bR)$ generated by  
\begin{align*}
\Psi(M_{\{x\}\times(-\infty, a]})=M_{\{x\}\times(a, \infty)},
\end{align*} 
for $(x,a)\in \overline{Q}\times [c, d], M\in \Mod(\bk)$, under limits. In this case, one replaces (\ref{eq: lemma Hom M_x, l, u}) by 
\begin{align*}
\Hom(F, M_{\{x\}\times (\ell, u]})=0, \forall (\{x\}\times [\ell, u))\cap \overline{Q\times I}=\varnothing, M\in \Mod(\bk), 
\end{align*}
where $\ell\in\bR, u\in (\ell,\infty]$. We omit the details for the rest of the steps since they are completely similar as before. 
\end{proof}

Recall the notations from \S\ref{subsec: results projectors}.

\begin{figure}[htbp]
\begin{center}
\begin{tikzpicture}[scale=1.5]
\filldraw[fill=gray] (-0.5, 0.3)--(0.5,0.3)--(0.5,-0.3)--(-0.5,-0.3)--(-0.5, 0.3);
\draw[thick, dashed] (-2,1)--(-2,-2);
\draw[thick, dashed] (2,1)--(2,-2);
\draw[dashed] (-2, -0.3)--(2,-0.3);
\draw[dashed] (-2, 0.3)--(2,0.3);
\draw (-0.5, 0.3)--(-2, -0.5);
\draw (0.5, 0.3)--(2, -0.5);
\draw(0.5,0.3) -- (0.7,0.3) node[right, xshift=0.1cm, yshift=-0.1cm] {\tiny{$\arctan \epsilon$}} arc (0:-29:0.2) -- cycle;
\draw [decorate,decoration={brace,amplitude=2pt},xshift=-3pt,yshift=0pt]
(-2,-0.3) -- (-2,0.3) node [black,midway,xshift=-1cm] 
{\footnotesize $I_1=(a_1,b_1)$};
\draw [decorate,decoration={brace,amplitude=2pt, mirror},xshift=0pt,yshift=-2pt]
(-0.5,-0.3) -- (0.5,-0.3) node [black,midway,yshift=-0.3cm] 
{\footnotesize $B_{\eta_1}$};
\draw [decorate,decoration={brace,amplitude=2pt, mirror},xshift=0pt,yshift=-3pt]
(-2,-2) -- (2,-2) node [black,midway,yshift=-0.3cm] 
{\footnotesize $B_{\delta}$};
\end{tikzpicture}
\caption{An illustration of the assumptions in Corollary \ref{cor: F,G,a1b1,eta1eta2}. }\label{figure: cor: F,G,a1b1,eta1eta2}
\end{center}
\end{figure}

\begin{cor}\label{cor: F,G,a1b1,eta1eta2}
Let $F, G\in \Sh^{<0}(W\times\bR)$. Let $0<\eta_1<\delta$ and $I_1=(a_1,b_1)$ be a bounded interval. Assume $\SS(F)\cap T^{*,<0}(W\times\bR)\subset T^{*}(B_{\eta_1}\times I_1)$ and the same for $G$. Let 
\begin{align}\label{eq: cor epsilon b1a1}
\epsilon>\frac{b_1-a_1}{\delta-\eta_1}
\end{align}
(see Figure \ref{figure: cor: F,G,a1b1,eta1eta2}). 
Then the projector $\rho^\epsilon_{\epsilon,\delta}: \Sh^{<0, D^*_\epsilon W\times\bR}(W\times\bR)\to \Sh^{<0, D^*_\epsilon B_\delta\times\bR}(B_\delta\times\bR)$ (the right adjoint of $j_!$ in the right column of (\ref{diagram: rho_epsilon,delta})) induces a natural isomorphism
\begin{align*}
\Hom(\rho_\epsilon(F), \rho_\epsilon(G))\overset{\sim}{\to} \Hom(\rho_{\epsilon,\delta}(F), \rho_{\epsilon,\delta}(G))
\end{align*}
\end{cor}

\begin{proof}
By adjunction, it is equivalent to prove 
\begin{align*}
\Hom(\rho_\epsilon(F), G)\overset{\sim}{\to} \Hom(\rho_{\epsilon,\delta}(F), G),
\end{align*}
where we simply write $\rho_\epsilon(F)$ for $i_\epsilon\rho_\epsilon(F)$ and $\rho_{\epsilon,\delta}(F)$ for $i_{\epsilon, \delta}\rho_{\epsilon,\delta}(F)$. 
Using Lemma \ref{lemma: Q, limits, colimits}, and that all of $\rho_\epsilon$, $\rho_{\epsilon,\delta}$ and $\rho^\epsilon_{\epsilon,\delta}$ preserve colimits and limits (Corollary \ref{cor: proj cU_2, cU_1}), it suffices to prove the case for $F=M_{\{x\}\times (-\infty, u]}$ and $G=T_{(-\infty,0]}(M_{(y,v)})$, where $x,y\in \overline{B}_{\delta_1}$ and $u, v\in I_1$. Using the adjunction of $T_{(-\infty, 0]}$ (as a right adjoint of the inclusion $\Sh^{<0}(W\times\bR)\hookrightarrow \Sh(W\times\bR)$), we can replace $G$ by $M_{(y,v)}$. 

Let $U=W\times (a_1,\infty)$ and $j_U: U\hookrightarrow W\times\bR$ be the open inclusion. 
Applying Lemma \ref{lemma: rho_delta, P_delta} and the condition on $\epsilon$ (\ref{eq: cor epsilon b1a1}), we get
\begin{align*}
\Hom(\rho_{\epsilon, \delta}(F), G)\cong \Hom(j_{U}^!\rho_{\epsilon, \delta}(F), G)\cong \Hom(j_U^!\rho_\epsilon(F), G)\cong \Hom(\rho_\epsilon(F),G). 
\end{align*}
\end{proof}

\begin{figure}[htbp]
\begin{center}
\begin{tikzpicture}
\filldraw[fill=gray] (-0.5, 0.3)--(0.5,0.3)--(0.5,-0.3)--(-0.5,-0.3) --(-0.5, 0.3);
\draw [decorate,decoration={brace,amplitude=2pt},xshift=0pt,yshift=3pt]
(-0.5,0.3) -- (0.5,0.3) node [black,midway,yshift=0.3cm] 
{\footnotesize $Q_2$};
\draw [decorate,decoration={brace,amplitude=2pt,mirror,raise=9pt},xshift=-4pt,yshift=0pt]
(0.5,-0.3) -- (0.5,0.3) node [black,midway,xshift=1.4cm] 
{\footnotesize $I_2=(a_2,b_2)$};
\filldraw[fill=gray] (-4, -0.5)--(-3,-0.5)--(-3,-1.1)--(-4,-1.1) --(-4, -0.5);
\draw [decorate,decoration={brace,amplitude=2pt},xshift=0pt,yshift=3pt]
(-4,-0.5) -- (-3,-0.5) node [black,midway,yshift=0.3cm] 
{\footnotesize $Q_1$};
\draw [decorate,decoration={brace,amplitude=2pt},xshift=-3pt,yshift=0pt]
(-4,-1.1) -- (-4,-0.5) node [black,midway,xshift=-1cm] 
{\footnotesize $I_1=(a_1,b_1)$};
\draw (-0.5, 0.3)--(-3, -1.3);
\draw (-3, -0.5)--(-0.5, -2.1);
\draw(-0.5,0.3) -- (-0.7,0.3) node[right, above] {\tiny{$\arctan \epsilon\ \ \ \ \ \ \ \ \ \ \ \ \ $}} arc (180:180+29:0.2) -- cycle;
\draw[dashed] (-0.7,0.3)--(-1.2,0.3);
\draw (-3, -0.5) -- (-2.8,-0.5) node[right, above] {\tiny{$\ \ \ \ \ \ \ \ \ \ \ \arctan \epsilon$}} arc (0:-29:0.2) -- cycle;
\draw[dashed] (-2.8,-0.5)--(-2.3,-0.5);
\end{tikzpicture}
\end{center}
\caption{An illustration of the assumptions in Corollary \ref{lemma: disjoint support}.}\label{figure: lemma: disjoint support}
\end{figure}

\begin{cor}\label{lemma: disjoint support}
Let $Q_1, Q_2$ be two disjoint pre-compact open subsets of $W$ such that 
\begin{align*}
d=d(\overline{Q_1}, \overline{Q_2}):=\min\{|x-y|: x\in\overline{Q_1}, y\in \overline{Q_2}\}>0. 
\end{align*}
Let $I_i=(a_i,b_i), i=1,2,$ be two bounded open intervals in $\bR$ and let $U_i=Q_i\times I_i, i=1,2$. Let 
\begin{align*}
\epsilon>\frac{\max\{|b_i-a_{i+1}|, i\in \bZ/2\bZ\}}{d}
\end{align*}
 (see Figure \ref{figure: lemma: disjoint support}). Let $\Sh^{<0}(W\times\bR)_{\overline{U}_i}$ be the (cocomplete) full subcategory of $\Sh^{<0}(W\times\bR)$ consisting of sheaves $F$ satisfying 
\begin{align*}
\SS(F)\cap T^{*,<0}(W\times\bR)\subset T^*\overline{U_i}. 
\end{align*}
 Let $\widetilde{G}_i\in \Sh^{<0}(W\times\bR)_{\overline{U}_i}$ and $G_i$ be the projection of $\widetilde{G}_i$ to $\Sh^{<0,D_\epsilon^*W\times\bR}(W\times\bR)$, for $i=1,2$. Then $\Hom(G_1, G_2)= 0$. 
\end{cor}

\begin{proof}
First, the condition on $\epsilon$ is exactly to make the following hold:
\begin{align}\label{eq: condition, delta, Cx,a,delta}
(\bigcup_{(x,a)\in \overline{U_i}}C_{x, a, \epsilon})\cap \overline{U_{i+1}}=\varnothing, i\in \bZ/2\bZ.
\end{align}

By Lemma \ref{lemma: Q, limits, colimits}, $\Sh^{<0}(W\times\bR)_{\overline{U}_i}$ is generated by sheaves of the form 
\begin{align*}
\widetilde{F}_{i,M_i}=(M_i)_{\{x_i\}\times (-\infty, a_i]}, x_i\in \overline{Q_i}, a_i\in \overline{I}_i, M_i\in \Mod(\bk)
\end{align*}
both under colimits and limits. Let $F_{i,M_i}$ be the projection of $\widetilde{F}_{i,M_i}$ in $\Sh^{<0,D_\epsilon^*W\times\bR}(W\times\bR)$. Since the projector $\Sh^{<0}(W\times\bR)\to \Sh^{<0,D_\epsilon^*W\times\bR}(W\times\bR)$ preserves both limits and colimits (Corollary \ref{cor: proj cU_2, cU_1}), 
 It suffices to show that for any such $\widetilde{F}_{1,M_1}$ and $\widetilde{F}_{2, M_2}$, we have $\Hom(F_{1,M_1}, F_{2,M_2})=0$. 
 
 By Lemma \ref{lemma: rho_delta, P_delta} (i), we have the fiber sequence
 \begin{align*}
F_{i,M_i}\cong (M_i)_{C_{x_i, a_i, \epsilon}}\to \widetilde{F}_{i,M_i} \to \Sigma (M_i)_{C'_{x_i, a_i, \epsilon}}. 
\end{align*}
By the assumption (\ref{eq: condition, delta, Cx,a,delta}), we directly see that $\Hom(F_{1,M_1}, F_{2,M_2})=0$. 
\end{proof}

\begin{cor}\label{cor: C, Phi_C, ep,delta}
Let $C\subset D_\ep^*W\times\bR$ be a closed subset. Let $\ep, \delta>0$. Assume that $\XZ(C)\subset \overline{B}_{\eta_1}\times [a_1,b_1]$ such that (\ref{eq: cor epsilon b1a1}) holds. Then the natural functor from restriction of localization (i.e. the analogue of (\ref{eq: Phi', ep, delta, localization}) for $W$ replacing $V^*$) 
\begin{align*}
\Phi_C: \Sh^{<0, D_\ep^*W\times\bR}_{C}(W\times\bR)\to \Sh^{<0, D_\ep^*B_\delta\times\bR}_{C}(W\times\bR)
\end{align*}
is an equivalence. 
\end{cor}

\begin{proof}
Let $\cQ=T^*\overline{B}_{\eta_1}\times[a_1,b_1]\subset T^*W\times\bR$. Let $\cQ_\epsilon=\cQ\cap (D_\epsilon^*W\times \bR)$. By assumption, $\cQ_{\epsilon}\subset D_\epsilon^*B_\delta\times \bR$. 
 
First, we claim that both 
\begin{align}
\label{eq: cor: C, Phi_C, ep,delta}&\rho_{\epsilon;\cQ}:=\rho_\epsilon|_{\Sh_{\cQ}^{<0}(W\times\bR)}: \Sh_{\cQ}^{<0}(W\times\bR)\to \Sh_{\cQ_\ep}^{<0, D_\ep^*W\times\bR}(W\times\bR)\\
\nonumber&\rho_{\epsilon,\delta; \cQ}:=\rho_{\epsilon,\delta}|_{\Sh_{\cQ}^{<0}(W\times\bR)}: \Sh_{\cQ}^{<0}(W\times\bR)\to \Sh_{\cQ_{\ep}}^{<0, D_\ep^*B_\delta\times\bR}(W\times\bR)
\end{align}
are essentially surjective. For any $F\in \Sh_{\cQ_\ep}^{<0, D_\ep^*W\times\bR}(W\times\bR)$, if 
\begin{align*}
\Hom(F, \rho_{\ep} (M_{\{x\}\times (a,\infty)}))=0,\ \forall (x,a)\in \overline{B}_{\eta_1}\times [a_1,b_1],\ M\in \Mod(\bk), 
\end{align*}
then by adjunction, we have $\Hom(F, M_{\{x\}\times (a,\infty)})=0$ for such $(x,a)$ and $M$. This implies that $\SS(F)\cap \Cone(\cQ)=\varnothing$, and so $F=0$. The same argument applies to $F\in  \Sh_{\cQ_{\ep}}^{<0, D_{\ep}^*B_\delta\times\bR}(W\times\bR)$. Hence the claim follows. 

Second, we observe that 
\begin{align*}
\Phi_{\cQ_\ep}: \Sh^{<0, D_\ep^*W\times\bR}_{\cQ_\epsilon}(W\times\bR)\to \Sh^{<0, D_\ep^*B_\delta\times\bR}_{\cQ_\ep}(W\times\bR)
\end{align*}
is an equivalence. This is a direct consequence of the above claim and  Corollary \ref{cor: F,G,a1b1,eta1eta2}. 

Lastly, since $\SS(\Phi_{\cQ_\ep}(F))\cap \Cone(\cQ_\ep)=\SS(F)\cap \Cone(\cQ_\ep)$ for any $F\in  \Sh^{<0, D_\ep^*W\times\bR}_{\cQ_\epsilon}(W\times\bR)$, we see that $\Phi_C$ is an equivalence. 

\end{proof}

\subsection{$\widehat{\Phi}$ from (\ref{diagram: widehatPhi}) is an equivalence for $0<\epsilon\ll \delta$}
Now we are under the assumptions in Proposition \ref{prop: K_chi, Modk}. 
Consider the following diagram: 
\begin{equation}\label{diagram: chi, Omega'}
\xymatrix{ 
\Sh^{<0, D^*_\epsilon V^*\times\bR}(V^*\times\bR)\ar[d]^{\Phi'}&\Sh^{<0, D_\epsilon^*V^*\times\bR}_{\chi_\sfF(\Omega)}(V^*\times\bR)\ar@{_{(}->}[l]_{\iota_1}\ar[d]^{\Phi}\\
\Sh^{<0, D_\epsilon^*B^*_\delta\times\bR}(B^*_\delta\times\bR)\ar@/^1pc/[u]^{L_1'}\ar@/_1pc/[u]_{R_1'}&\Sh^{<0,D^*_\epsilon B^*_\delta\times\bR}_{\chi_\sfF(\Omega)}(B^*_\delta\times\bR)
\ar@/^1pc/[u]^{L_1}\ar@/_1pc/[u]_{R_1}\ar@{_{(}->}[l]_{\iota_{\delta,1}}
}
\end{equation}
that is related to (\ref{diagram: widehatPhi}) under $K_{\chi_\sfF}\circ$.

\begin{lemma}\label{lemma: Phi equivalence}
For any fixed $\delta>0$, $\Phi$ is an equivalence for $0<\epsilon\ll \delta$, and so is $\widehat{\Phi}$. 
\end{lemma}
\begin{proof}
We will apply Corollary \ref{cor: C, Phi_C, ep,delta}. For this, we look at $\XZ(\chi_\sfF(\Omega))$ in $V^*\times\bR$. It is clear that there exist functions $h(\ep)>0$ and $\nu(\epsilon)>0$ such that 
\begin{itemize}
\item[(1)] $h(\ep)\to 0$ and $\nu(\ep)\to 0$ as $\ep\to 0$;
\item[(2)] $\XZ(\chi_\sfF(\Omega))\subset B_{\nu(\ep)}\times (-\ep h(\ep), \ep h(\ep))$. 
\end{itemize}
This follows from the inequality, for any smooth $\gamma: [0,1]\to \Omega_\pi$, 
\begin{align*}
|\int_\gamma \bq d\bp|<\epsilon \int_\gamma |d\bp|
\end{align*}
and that $\int_\gamma \bq d\bp$ only depends on the endpoints of $\gamma$. Note that for any fixed $\delta>0$, we have 
\begin{align*}
\epsilon>\frac{2\ep h(\ep)}{\delta-\nu(\ep)}\text{ as }\ep\to 0. 
\end{align*} 
Now Corollary \ref{cor: C, Phi_C, ep,delta} applies for $C=\chi_\sfF(\Omega)\cap (D_\epsilon^*V^*\times\bR)$, $\eta_1=\nu(\ep)$,  $(a_1,b_1)=(-\ep h(\ep), \ep h(\ep))$, and $0<\ep\ll \delta$. 

\end{proof}

\subsection{Proof of Proposition \ref{prop: K_chi, Modk}}\label{subsec: proof of prop: K_chi, Modk}

We will use Lemma \ref{lemma: Phi equivalence} to perform two ways of calculating 
\begin{align}\label{eq: cM_0;0}
\xymatrix@R=.4pc{
\cM_{(0;0)}^{\pi}:\ar@{=}[r]&\varinjlim_{(0;0)\in\cU} \Sh^{<0,\cU\times \bR}_{\Omega}(V\times\bR)\\
\ar[r]^{\sim\ \ \ \ \ \ \ \ \ \ \ \ \ \ \ \ \ \ \ \ \ \ \ \ \ \ \ \ \ }_{K_{\chi_\sfF}\circ\ \ \ \ \ \ \ \ \ \ \ \ \ \ \ \ \ \ \ \ \ \ \ \ \ \ \ \ \ } &\varinjlim_{(0;0)\in\cU'} \Sh^{<0,\cU'\times \bR}_{\chi_\sfF(\Omega)}(V^*\times\bR)\\
\ar[r]^{\sim\ \ \ \ \ \ \ \ \ \ \ \ \ \ \ \ \ \ \ \ \ \ \ \ \ \ \ \ \ }_{K_{\chi}\circ K_{\chi_\sfF}^{-1}\circ\ \ \ \ \ \ \ \ \ \ \ \ \ \ \ \ \ \ \ \ \ \ \ \ \ \ \ \ \ }  & \varinjlim_{(0;0)\in\cU'} \Sh^{<0,\cU'\times \bR}_{\chi(\Omega)}(V^*\times\bR)
}
\end{align} 
where $\cU\subset T^*V$ and $\cU'\subset T^*V^*$ respectively runs over all open subsets that contain $(0;0)$. One may interpret $\cM_{(0;0)}^{\pi}$ as the stalk of the microlocal sheaf category on $\Omega_\pi$ at $(0;0)$. Note that this stalk, by definition, is different from the stalk of $\text{MSh}_{\Cone(\Omega)\cup\zeta_{V\times\bR}}^{\mathrm{p}}$ at $(0,0;0,-dt)$ from \S\ref{subsubsec: MSh_Z}. However, see Corollary \ref{cor: cM0;p0toMShp} below. \\

\noindent\emph{Approach 1}: Using  Lemma  \ref{lemma: Phi equivalence}, we get 
\begin{align*}
\cM_{(0;0)}^\pi\simeq \varinjlim_{\delta\gg \epsilon>0} \Sh^{<0, D_\epsilon^*B_\delta^*\times \bR}_{\chi_{\sfF}(\Omega)}(V^*\times\bR)\overset{\sim}{\leftarrow}  \Sh_{\Omega}^{<0}(B_\epsilon\times\bR).
\end{align*}

\noindent\emph{Approach 2}: $\chi(\Omega)_\pi\cap D_\epsilon^*B^*_\delta$ has wavefront a closed smooth hypersurface $H_\delta$ in $B^*_{\delta}\times \bR$ projecting to $B^*_\delta$ isomorphically, where $H_\delta$ is the graph $t=f(v^*)$ for some function $f$ on $B_\delta^*$. By a change of variables $t\mapsto t-f(v^*)$, which doesn't affect the third line of (\ref{eq: cM_0;0}) above, we may replace $\Cone(\chi(\Omega))\cap T^{*,<0}(B_\delta^*\times \bR)$ by the negative conormal bundle of $B_\delta^*\times \{0\}\subset B_\delta^*\times \bR$. Denote the Legendrian boundary of the negative conormal bundle of $V^*\times\{0\}$ in $V^*\times\bR$ by $\Omega_0'$. 

Let $0<\varrho\ll \ep$. Then applying $K_{\chi_\sfF}^{-1}\circ$ (and later $K_{\chi_\sfF}\circ$) and Corollary \ref{cor: C, Phi_C, ep,delta}, we get equivalences 
\begin{align*}
&\Sh^{<0, D_\epsilon^*B_\varrho^*\times \bR}_{\Omega'_0}(V^*\times \bR)\overset{\sim}{\to }\Sh^{<0, D^*_{\varrho}B_\epsilon\times \bR}_{\chi_\sfF^{-1}(\Omega_0')}(V\times \bR)\simeq \Sh^{<0, D^*_\varrho V\times\bR}_{\chi_\sfF^{-1}(\Omega_0')}(V\times \bR)\\
&\overset{\sim}{\to} \Sh^{<0}_{\Omega'_0}(B_\varrho^*\times\bR)\simeq \Mod(\bk),
\end{align*}
where $\chi_\sfF^{-1}(\Omega_0')_\pi$ is the cotangent fiber at $0\in V$, and the front projection of $\chi_\sfF^{-1}(\Omega_0')$ is just the point $(0,0)\in V\times \bR$. 

Hence 
\begin{align*}
\cM^\pi_{(0;0)}&\simeq \varinjlim_{0<\varrho\ll \epsilon} \Sh^{<0, D_\epsilon^*B_\varrho^*\times \bR}_{\Omega'_0}(V^*\times \bR)\simeq  \varinjlim_{0<\varrho}\Sh^{<0}_{\Omega'_0}(B_\varrho^*\times\bR)\\
\simeq&\Mod(\bk).
\end{align*}

Combining \emph{Approach 1} and \emph{Approach 2}, we directly get the second equivalence in the proposition. Hence the proof is complete.

\subsection{Proof of (\ref{eq:L-chiL})}\label{subsec: Appendix Proof of eq:L-chiL}
Now (\ref{eq:L-chiL}) follows immediately from Lemma \ref{lemma: Sh_Omega, <0} and Proposition \ref{prop: K_chi, Modk}.

\begin{cor}\label{cor: cM0;p0toMShp}
Let $\Omega$ be a smooth Legendrian ball as \S\ref{subsubsec: smooth Legendrian ball}. Then the natural functor $\cM^\pi_{(0;p_0)}\to (\MSh_{\Cone(\Omega)\cup \zeta_{V\times\bR}}^{\mathrm{p}})_{(0,0;p_0, -dt)}$, induced from localization, is an equivalence. 
\end{cor}

\begin{proof}
Without loss of generality, we may assume $p_0=0$. The only difference between $\cM_{(0;0)}^\pi$ (\ref{eq: cM_0;0}) and the stalk of $\MSh_{\Cone(\Omega)\cup \zeta_{V\times\bR}}^{\mathrm{p}}$ at $(0,0;0, -dt)$ is that we need to cut the open subsets $\cU$ (resp. $\cU'$) in the 1-jet bundle by small values of $|t|$ in the expression(s) of the colimit(s). 

Under the same setting of \S\ref{subsec: proof of prop: K_chi, Modk}, choose $1\gg \eta\gg \delta\gg \epsilon$, it suffices to show that the restriction of the localization functor $P_{D_\epsilon^*B_\delta^*\times\bR, D_\epsilon^*B_\delta^*\times (-\eta, \eta)}$ (Corollary \ref{cor: proj cU_2, cU_1})
\begin{align*}
\Phi_{(-\eta,\eta)}: &\Sh_{\chi_\sfF(\Omega)}^{<0,D_\epsilon^*B_\delta^*\times\bR}(B_\delta^*\times\bR)\to \Sh_{\chi_\sfF(\Omega)}^{<0,D_\epsilon^*B_\delta^*\times (-\eta, \eta)}(B_\delta^*\times (-\eta,\eta)), 
\end{align*}
is an equivalence. Using $\XZ(\SS(\chi_\sfF(\Omega)))\subset B^*_{\nu(\ep)}\times (-\ep h(\ep), \ep h(\ep))$ (notation from the proof of Lemma \ref{lemma: Phi equivalence}) and Lemma \ref{lemma: Q, I, cE}, we know that $\Phi_{(-\eta,\eta)}$ is the same as the pullback along $B_\delta^*\times(-\eta, \eta)\hookrightarrow B_\delta^*\times\bR$, for $\eta\gg \ep$. 
The fully faithfulness then follows from the same argument as in the proof of Corollary \ref{cor: F,G,a1b1,eta1eta2}. 
The essential surjectivity of $\Phi_{(-\eta,\eta)}$ follows from the same proof for Corollary \ref{cor: C, Phi_C, ep,delta}. 
\end{proof}

\begin{remark}\label{remark: cM0;p0toMShp}
There is a straightforward generalization of Corollary \ref{cor: cM0;p0toMShp} in the following setting. One can replace $\Omega$ by any compact subset $C$ in $T^*V\times\bR$ such that 
\begin{itemize}
\item[(1)] $C\cap (T^*_0V\times\bR)=\{(0,0;p_0=0)\}$;

\item[(2)] there exist sequences $\ep_n>0$ and $h_n>0$ with $\ep_n\to 0$ and $h_n\to 0$ such that $C\cap (T^*B_{\ep_n}\times\bR)\subset T^*B_{\ep_n}\times (-\ep_n h_n, \ep_n h_n)$, for $n\gg 1$.
\end{itemize}
It is clear that there exists a function $\nu(r)>0$ with $\lim_{r\to 0^+}\nu(r)=0$, such that $C\cap (T^*B_{\ep_n}\times\bR)\subset D^*_{\nu(\ep_n)}B_{\ep_n}\times (-\ep_n h_n, \ep_n h_n)$, for $n\gg1$.  
Then under the contact Fourier transform, we have 
\begin{align*}
\XZ\big(\chi_\sfF(C)\cap (D_{\ep_n}^*B_\delta^*\times\bR)\big)\subset B_{\nu(\ep_n)}^*\times (-\ep_n (h_n+\nu(\ep_n)), \ep_n (h_n+\nu(\ep_n))), 
\end{align*}
for $n\gg 1$. Then the argument in Corollary \ref{cor: cM0;p0toMShp} certainly applies. This says that 
\begin{align*}
(\MSh_{\Cone(C)\cup \zeta_{V\times\bR}}^{\mathrm{p}})_{(0,0;0, -dt)}\simeq \varinjlim_{\ep\to 0}\Sh^{<0}_C(B_\ep\times \bR)\simeq \varinjlim_{\ep\to 0}\Sh_C(B_\ep\times\bR)/\Loc(B_\ep\times\bR). 
\end{align*}
This recovers a special case of \cite[Lemma 7.7]{Nadler-Shende} (when one takes colimits in $\widehat{\Cat}_\infty$), which is sufficient for many applications. For example, assuming $C$ is a compact subanalytic Legendrian in $T^*\overline{B_r}\times\bR$, for some $r>0$,  satisfying condition (1), then condition (2) is automatic. 

\end{remark}

\section{Proof of descent (\ref{eq: Sh descent})}\label{appendix B, descent}

In this appendix, we give a proof of (\ref{eq: Sh descent}) on the sheaf property of the functor $\Sh(-;\bk)$\footnote{We thank Peter Haine for telling us this proof and for sharing \cite{HPT}.}.
Let $\widehat{\Cat}_\infty$ denote the $\infty$-category of (not necessarily small) $\infty$-categories. Recall $\PrL$ and $\PrR$ from \cite[Definition 5.5.3.1]{higher-topoi}. 
Let $\cC$ be an $\infty$-category with colimits and pullbacks. 
Consider the natural functor 
\begin{align}\label{eq: cCopCunderc}
\cC^{op}&\to \Cat_\infty\\
\nonumber c&\mapsto \cC_{/c}\\
\nonumber (f: c\to c')\text{ in }\cC&\mapsto (f^*: \cC_{/c'}\to \cC_{/c}, (x\to c')\mapsto (x\underset{c'}{\times}c\to c)). 
\end{align}
Note that $f^*$ is the right adjoint to the obvious functor $\cC_{/c}\to \cC_{/c'}$ given by composition with $f$. 

\begin{definition}
Let $\cC$ be an $\infty$-category with colimits and pullbacks. We say that colimits in $\cC$ are \emph{van-Kampen} if for any diagram $I\to \cC, i\mapsto c_i$, the natural functor induced from (\ref{eq: cCopCunderc})
\begin{align*}
\cC_{/\varinjlim\limits_{i\in I}c_i}\to \varprojlim_{i\in I} \cC_{/c_i}
\end{align*}
is an equivalence. 
\end{definition}

\begin{lemma}\label{lemma: colimits van-Kampen}
A presentable $\infty$-category $\cX$ is an $\infty$-topos if and only if colimits in $\cX$ are van-Kampen. 
\end{lemma}

\begin{proof}

First, by \cite[Theorem 6.1.3.9 (3)$\Leftrightarrow$(4), Theorem 6.1.0.6]{higher-topoi}, a presentable $\infty$-category $\cX$ is an $\infty$-topos if and only if $\cX^{op}\to \PrL$ (as in (\ref{eq: cCopCunderc}) but with target changed to $\PrL$) is limit preserving. Since $\PrL\to \widehat{\Cat}_\infty$ preserves limits (\cite[Proposition 5.5.3.13]{higher-topoi}), the latter is equivalent to that colimits in $\cX$ are van-Kampen. 
\end{proof}

In the following, let $X$ be a topological space. Recall $\Shv(X)$, the $\infty$-category of sheaves of spaces, from \cite[Definition 6.2.2.6]{higher-topoi} (see also \cite[\S 1.3.1]{SAG}). 
\begin{cor}\label{cor: Shv, descent}
Let $\{U_i\}_{i\in I}$ be a covering sieve of $X$. Then the natural functor induced from restrictions $\Shv(X)\to \Shv(U_i)$
\begin{align}\label{eq: cor ShvX, ShvU_i}
\Shv(X)\to \varprojlim_{i\in I}\Shv(U_i)
\end{align}
is an equivalence. 
\end{cor}

\begin{proof}
Let $j: \Open(X)\to \Shv(X)$ be the Yoneda embedding. 
For any open $U\subset X$, there is a canonical equivalence $\Shv(X)_{/j(U)}\simeq \Shv(U)$. For any covering sieve $\{U_i: i\in I\}$ of $X$, we have $\varinjlim_{i\in I}j(U)\overset{\sim}{\to} j(X)$ in $\Shv(X)$. Since $\Shv(X)$ is an $\infty$-topos \cite[Proposition 6.2.2.7]{higher-topoi}, by Lemma \ref{lemma: colimits van-Kampen}, we have an equivalence
\begin{align*}
\Shv(X)\simeq \Shv(X)_{/\varinjlim_{i\in I}j(U_i)}\overset{\sim}{\to} \varprojlim_{i\in I}\Shv(X)_{/j(U_i)}\simeq \varprojlim_{i\in I}\Shv(U_i),
\end{align*}
canonically isomorphic to (\ref{eq: cor ShvX, ShvU_i}). 
\end{proof}

For any presentable $\infty$-category $\cE$, let $\Sh(X;\cE)$ denote the presentable $\infty$-category of $\cE$-valued sheaves on $X$. By \cite[\S 1.3.1]{SAG}, there is a natural identification $\Sh(X;\cE)\simeq \Shv(X)\otimes \cE$, where the tensor product is taken in $\PrL$. 

We have the following lemma (see also \cite[Lemma 4.2.2]{HPT}). 
\begin{lemma}\label{lemma: cC_j, cE, limit}
Let $\{\cC_j\}_{i\in J}$ be a diagram in $\PrL$ such that the functors involved in the diagram have both left and right adjoints. Then the natural functor 
\begin{align}\label{eq: cC_j, cE, limit}
(\varprojlim_{j\in J^{op}}\cC_j)\otimes \cE\to \varprojlim_{j\in J^{op}}(\cC_j\otimes\cE)
\end{align} 
is an equivalence. 
\end{lemma}

\begin{proof}
Since both forgetful functors $\PrL\to \widehat{\Cat}_\infty$, $\PrR\to \widehat{\Cat}_\infty$ preserve limits \cite[Proposition 5.5.3.13, Theorem 5.5.3.18]{higher-topoi} and  the functors involved in the diagram have both left and right adjoints, the functor (\ref{eq: cC_j, cE, limit}), lying in both the 1-full subcategory $\PrL$ and $\PrR$ of $\widehat{\Cat}_\infty$, is equivalently induced from the family of tautological functors $(\varprojlim_{j\in J^{op}}\cC_j)\otimes\cE\to \cC_j\otimes\cE, j\in J$, in $\PrR$. Under the equivalence $(\PrL)^{op}\simeq \PrR$, the functor (\ref{eq: cC_j, cE, limit}) viewed in $\PrR$ is corresponding to $\varinjlim_{j\in J^{op}}(\cC_j\otimes\cE)\to (\varinjlim_{j\in J^{op}}\cC_j)\otimes\cE$ in $\PrL$. By \cite[Remark 4.8.1.24]{higher-algebra}, $(-)\otimes \cE$ preserves colimits in $\PrL$, hence (\ref{eq: cC_j, cE, limit}) is an equivalence. 

\end{proof}

\begin{cor}\label{cor: ShX; cE, descent}
Let  $\cE$ be any presentable $\infty$-category. 
Let $\{U_i\}_{i\in I}$ be a covering sieve of $X$. Then the natural functor induced from restrictions $\Sh(X;\cE)\to \Sh(U_i;\cE)$
\begin{align}\label{eq: cor ShX, ShU_i, cE}
\Sh(X;\cE)\to \varprojlim_{i\in I}\Sh(U_i;\cE)
\end{align}
is an equivalence. 
\end{cor}

\begin{proof}
Using Corollary \ref{cor: Shv, descent} and Lemma \ref{lemma: cC_j, cE, limit}, we have 
\begin{align*}
\Sh(X;\cE)\simeq  \Shv(X)\otimes \cE\overset{\sim}{\to} (\varprojlim_{i\in I}\Shv(U_i))\otimes \cE\overset{\sim}{\to }\varprojlim_{i\in I}(\Shv(U_i)\otimes \cE)\simeq \varprojlim_{i\in I}\Sh(U_i;\cE). 
\end{align*}
Clearly the composition is naturally identified with (\ref{eq: cor ShX, ShU_i, cE}). 
\end{proof}

\subsection{Proof of (\ref{eq: Sh descent}).} Now (\ref{eq: Sh descent}) is the special case of Corollary \ref{cor: ShX; cE, descent}, where $\cE=\Mod(\bk)$.

\end{document}